\newtheorem{thm}{Theorem}[section] \newtheorem{lemma}[thm]{Lemma}
\newtheorem{cor}[thm]{Corollary} \newtheorem{prop}[thm]{Proposition}
\theoremstyle{definition} \newtheorem{definition}[thm]{Definition}
\newtheorem{remark}[thm]{Remark} 
\newtheorem{notation}[thm]{Notation}
\newtheorem{example}[thm]{Example} 
\theoremstyle{remark}
\newcommand\mstrut{{\phantom{.}}}\newcommand\bull{{\!\hbox{\bf .}}}
\newcommand\newdot{{\kern.8pt\cdot\kern.8pt}}
\newcommand\nbull{{\kern.8pt\raise1.5pt\hbox{\bf .}\kern.8pt}}
\font\sevenrm=cmr7
\newcommand\A{\mathbb{A}}\newcommand\E{\mathbb{E}}
\renewcommand\L{L}
\newcommand\R{\mathbb{R}}
\renewcommand\P{\mathbb{P}}
 \newcommand\SC{\mathscr C}
 \newcommand\SF{\mathscr F}
 \newcommand\SR{\mathscr R}
 \newcommand\SL{\mathscr L}
\newbox\ovlbox
\def\ovl#1{\setbox\ovlbox\hbox{$#1$}\rlap{\kern.5\wd\ovlbox
    \ifx#1\SF\else\ifx#1\nabla\kern-2pt\else\ifx#1\CA\else
    \ifx#1J\kern-.5pt\else\ifx#1\Psi\kern-2pt
    \else\ifx#1A\kern-1pt\else\ifx#1f\kern-1pt\else\ifx#1\ast\kern-2pt\else
    \ifx#1Y\kern-2pt\else\ifx#1\omega\kern-2pt
    \else\kern-1.5pt\fi\fi\fi\fi\fi\fi\fi\fi\fi\fi
    $\overline{\hbox to4pt{\hss$\phantom{#1}$\hss}}$\hss}#1}
\def\acong{\mathrel{\mathpalette\@avereq\sim}} % isomorphic sign
\def\@avereq#1#2{\lower.5\p@\vbox{\baselineskip\z@skip\lineskip-.5\p@
    \ialign{$\m@th#1\hfil##\hfil$\crcr#2\crcr\longrightarrow\crcr}}}
\def\mequal{\mathrel{\mathpalette\@mvereq{\hbox{\sevenrm m}}}}
\def\@mvereq#1#2{\lower.5\p@\vbox{\baselineskip\z@skip\lineskip1.5\p@
    \ialign{$\m@th#1\hfil##\hfil$\crcr#2\crcr=\crcr}}}
\def\relop#1#2#3{\mathrel{\mathop{\kern\z@ #2}\limits^{#1}_{#3}}}
\newcommand\rightinj{\lhook\joinrel\rightarrow}
\newcommand\1{\hbox{\kern.375em\vrule height1.57ex depth-.1ex
    width.05em\kern-.375em \rm 1}}
\def\partr#1{/\!/_{\!#1}^{\phantom{.}}}
\def\invpartr#1{/\!/_{\!#1}^{-1}}
\def\vol{{\operatorname{vol}}} 
\def\Ric{{\operatorname{Ric}}}
\def\End{{\operatorname{End}}}
\DeclareMathOperator{\divv}{div} \DeclareMathOperator{\trace}{trace}
\DeclareMathOperator{\tra}{tr} \DeclareMathOperator{\trans}{}
\def\mathpal#1{\mathop{\mathchoice{\text{\rm #1}}%
    {\text{\rm #1}}{\text{\rm #1}}%
    {\text{\rm #1}}}\nolimits} \def\id{{\mathpal{id}}}
\def\boxit#1{\vbox{\hrule\hbox{\vrule\kern3pt
      \vbox{\kern3pt#1\kern3pt}\kern3pt\vrule}\hrule}}
\def\Aut{{\mathpal{Aut}}} \def\OM{\mathpal{O}(M)}
\def\On{\mathpal{O}(n)} 
\let\oldtocsection=\tocsection \let\oldtocsubsection=\tocsubsection
\renewcommand{\tocsection}[2]{\hspace{0em}\oldtocsection{#1}{#2}}
\renewcommand{\tocsubsection}[2]{\hspace{2em}\oldtocsubsection{#1}{#2}}
\numberwithin{equation}{section}
\begin{document}
\title[Derivative and divergence formulae]{Derivative and divergence formulae\\
  for diffusion semigroups} \author{Anton Thalmaier {\rm and} James
  Thompson}
\address{Mathematics Research Unit, University of Luxembourg} \email{anton.thalmaier@uni.lu}
\email{james.thompson@uni.lu}

\begin{abstract}
  For a semigroup $P_t$ generated by an elliptic operator on a smooth
  manifold $M$, we use straightforward martingale arguments to derive
  probabilistic formulae for $P_t(V(f))$, not involving derivatives of
  $f$, where $V$ is a vector field on $M$. For non-symmetric
  generators, such formulae correspond to the derivative of the heat
  kernel in the \textit{forward} variable. As an application, these
  formulae can be used to derive various \textit{shift-Harnack}
  inequalities.
\end{abstract}

\keywords{Diffusion semigroup, Heat kernel, Gradient estimate, Harnack inequality, 
  Ricci curvature} \subjclass[2010]{58J65, 60J60, 53C21} \date\today

\maketitle

\section*{Introduction}

For a Banach space $E$, $e \in E$ and a Markov operator $P$ on
$\mathcal{B}_b(E)$, it is known that certain estimates on
$P(\nabla_e f)$ are equivalent to corresponding \textit{shift-Harnack}
inequalities. This was proved by F.-Y. Wang in \cite{wang}. For
example, for $\delta_e \in (0,1)$ and
$\beta_e \in C((\delta_e,\infty)\times E;[0,\infty))$, he proved that
the derivative-entropy estimate
\begin{equation}\label{eq:estone}
  \big|P(\nabla_e f)\big| \leq \delta\big( P(f \log f) - (Pf)\log Pf\big) + \beta_e(\delta,\newdot)Pf
\end{equation}
holds for any $\delta \geq \delta_e$ and positive $f \in C_b^1(E)$ if
and only if the inequality
\begin{equation}
  (Pf)^p \leq \left(P(f^p(re+\newdot))\right)\,\exp\left(\int_0^1 \frac{pr}{1+(p-1)s} \,\beta_e\left(\frac{p-1}{r+r(p-1)s},\newdot + sre\right)ds\right)
\end{equation}
holds for any $p \geq 1/(1-r\delta_e)$, $r \in (0,1/\delta_e)$ and
positive $f \in \mathcal{B}_b(E)$. Furthermore, he also proved that if
$C\geq 0$ is a constant then the $L^2$-derivative inequality
\begin{equation}\label{eq:esttwo}
  \big|P(\nabla_e f)\big|^2 \leq CPf^2
\end{equation}
holds for any non-negative $f \in C^1_b(E)$ if and only if the
inequality
\begin{equation}
  Pf \leq P\big(f(\alpha e+ {\newdot})\big) + |\alpha|\sqrt{CPf^2}
\end{equation}
holds for any $\alpha \in \R$ and non-negative $f \in \mathcal{B}_b (E)$.
The objective of this article is to find probabilistic formulae for
$P_T(V(f))$ from which such estimates can be derived, for the case in
which $P_T$ is the Markov operator associated to a non-degenerate
diffusion $X_t$ on a smooth, finite-dimensional manifold $M$, and $V$
a vector field.

In Section \ref{sec:one} we suppose that $M$ is a Riemannian manifold
and that the generator of $X_t$ is $\Delta +Z$, for some smooth vector
field $Z$. Any non-degenerate diffusion on a smooth manifold induces a
Riemannian metric with respect to which its generator takes this
form. The basic strategy is then to use the relation
$V(f) = \divv (fV) -f \divv V$ to reduce the problem to finding a
suitable formula for $P_T(\divv (fV))$. Such formulae have been given in
\cite{DriverThal_2002} and \cite{ElworthyLi2} for the case $Z=0$, which we extend to the general case with Theorem \ref{thm:MainTheorem}. In doing so, we do not
make any assumptions on the derivatives of the curvature tensor, as occurred in \cite{CruzeiroZhang}. For an adapted
process $h_t$ with paths in the Cameron-Martin space
$L^{1,2}([0,T];\R)$, with $h_0=0$ and $h_T =1$ and under
certain additional conditions, we obtain the formula
\begin{align}
  &P_T\left(V(f)\right)(x)= -\E\big[f(X_T(x))\,(\divv V)(X_T(x))\big] \\
  &\hphantom{{}={}} +\frac12\E\left[f(X_T(x))\,\Big\langle V(X_T(x)), \partr T \Theta_T \int_0^T \left(\dot h_t-(\divv Z)(X_t(x))h_t\right)\,\Theta_t^{-1}dB_t\Big\rangle\right]
\end{align}
where $\Theta$ is the $\Aut(T_xM)$-valued process defined by the
pathwise differential equation
$$\frac{d}{dt}\Theta_t=- \invpartr t \left(\Ric^\sharp+
  (\nabla_{\bull}Z)^{\ast}-\divv Z\right) \partr t \Theta_t$$
with $\Theta_0=\id_{T_xM}$. Here $//_t$ denotes the stochastic
parallel transport associated to $X_t(x)$, whose antidevelopment to
$T_xM$ has martingale part $B$. In particular, $B$ is a diffusion on
$\R^n$ generated by the Laplacian; it is a standard Brownian
motion sped up by $2$, so that $dB^i_t dB^j_t = 2 \delta_{ij} \,dt$.
Choosing $h_t$ explicitly yields a formula from which estimates then can
be deduced, as described in Subsection \ref{subsec:one_SH}.

The problem of finding a suitable formula for $P_T(V(f))$ is dual to
that of finding an analogous one for $V(P_Tf)$. A formula for the latter is
called the Bismut formula \cite{bismut} or the Bismut-Elworthy-Li
formula, on account of \cite{ElworthyLi}. We provide a brief proof of
it in Subsection \ref{ss:formdif}, since we would like to compare it
to our formula for $P_T(V(f))$. Our approach to these formulae is based
on martingale arguments; integration by parts is done at the level of
local martingales. Under conditions which assure that the local
martingales are true martingales, the wanted formulae are then
obtained by taking expectations. They allow for the choice of a finite
energy process. Depending on the intended type, conditions are imposed either
on the right endpoint, as in the formula for $P_T(V(f))$, or the left
endpoint, as in the formula for $V(P_Tf)$. The formula for $P_T(V(f))$
requires non-explosivity; the formula for $V(P_Tf)$ does not. From the
latter can be deduced Bismut's formula for the logarithmic derivative
in the \textit{backward} variable $x$ of the heat kernel $p_T(x,y)$
determined by
$$(P_Tf)(x)=\int_M f(y)p_T(x,y)\,\vol(dy),\quad f\in C_b(M).$$ 
From our formula for $P_T(V(f))$ can be deduced the following formula for the
derivative in the \textit{forward} variable $y$:
\begin{equation}
  (\nabla \log p_T(x,\newdot))_y = -\frac12\E\left[{\partr T} \Theta_T \int_0^T \left(\dot h_t-(\divv Z)(X_t(x))h_t\right)\Theta_t^{-1} dB_t \big\vert \, X_T(x) = y \right].
\end{equation}

In Section \ref{sec:two} we consider the general case in which $M$ is
a smooth manifold and $X_t$ a non-degenerate diffusion solving a
Stratonovich equation of the form
\begin{equation}
  d X_t = A_0(X_t)\,dt+ A(X_t)\circ dB_t.
\end{equation}
We denote by $TX_t$ the derivative (in probability) of the solution
flow. Using a similar approach to that of Section \ref{sec:one}, and a variety of geometric objects naturally associated to the equation, we obtain, under certain conditions, the formula
\begin{equation}
  \begin{split}
    &P_T(V(f))= -\sum_{i=1}^m \E\left[ f(X_T)\,A_i\langle V,A_i\rangle (X_T)\right]\\
    &+\frac12\E\left[ f(X_T) \,\bigg \langle V,\,\Xi_T
      \int_0^T {\Xi^{-1}_{t}} \left(\left(\dot{h}_t-(\trace
          \hat{\nabla}A_0)(X_t)h_t\right)A(X_t)\,dB_t + 2h_t
        A_0^A \,dt\right)\bigg \rangle\right]
  \end{split}
\end{equation}
with
\begin{equation}
  \begin{split}
    \Xi_t =\text{ }& TX_{t} - TX_{t} \int_0^t TX_{s}^{-1} \left(\left((\hat{\nabla}A_0)^\ast + \hat{\nabla}A_0+ \trace \hat{\nabla}A_0\right)(\Xi_s)\right)ds,\\
    A_0^A=\text{ }&\sum_{i=1}^m\left((\hat{\nabla}A_0)^{ \ast}
      +\hat{\nabla} A_0\right)\left(\breve{T}(\newdot,A_i)^\ast
      (A_i)\right)+\big[A_0,\breve{T}(\newdot,A_i)^\ast (A_i)\big],
  \end{split}
\end{equation}
where the operators $\hat{\nabla} A_0$ and
$\breve{T}(\newdot,A_i)$ are given at each $x \in M$ and $v \in T_xM$
by
\begin{equation}
  \begin{split}
    \hat{\nabla}_v A_0 =\text{ }& A(x)\left(d(A^\ast(\newdot) A_0(\newdot))_x (v) - (dA^\ast)_x(v,A_0)\right),\\
    \breve{T}(v,A_i)_x =\text{ }& A(x) (dA^\ast)_x(v,A_i).
  \end{split}
\end{equation}
This formula has the advantage of involving neither parallel transport nor
Riemannian curvature, both typically difficult to calculate in terms
of $A$.

\section{Intrinsic Formulae}\label{sec:one}

\subsection{Preliminaries}\label{ss:prelim}

Let $M$ be a complete and connected $n$-dimensional Riemannian
manifold, $\nabla$ the Levi-Civita connection on $M$ and
$\pi\colon\OM \to M$ the orthonormal frame bundle over $M$. Let
$E\to M$ be an associated vector bundle with fibre $V$ and structure
group $G=\On$.  The induced covariant derivative
\begin{align*}
  \nabla\colon\Gamma(E)\to\Gamma(T^*M\otimes E)
\end{align*}
determines the so-called \textit{connection Laplacian} (or
\textit{rough Laplacian}) $\square$ on $\Gamma(E)$,
$$\square a=\trace\nabla^2 a.$$ Note that
$\nabla^2 a\in \Gamma(T^*M\otimes T^*M\otimes E)$ and
$(\square a)_x = \sum_i\nabla^2 a (v_i,v_i)\in E_x$ where $v_i$ runs
through an orthonormal basis of $T_x M$. For $a,b\in\Gamma(E)$ of
compact support it is immediate to check that
$$\langle\square a,b\rangle_{L^2(E)}=-\langle\nabla a,\nabla
b\rangle_{L^2(T^*M\otimes E)}.$$
In this sense we have $\square=-\nabla^*\nabla$. Let $H$ be the
horizontal subbundle of the $G$-invariant splitting of $T\OM$ and
\begin{equation*}\label{Zci}
  h\colon\,\pi^\ast TM\acong H\rightinj T\OM
\end{equation*}
the \textit{horizontal lift} of the $G$-connection; fibrewise this
bundle isomorphism reads as
$$h_u\colon\,T_{\pi(u)}M\acong H_u,\quad u\in\OM.$$ In terms of the
\textit{standard horizontal vector fields} $H_1,\ldots,H_n$ on
$\OM$, $$H_i(u):=h_u(ue_i),\quad u\in\OM,$$ \textit{Bochner's
  horizontal Laplacian} $\Delta^{\text{hor}}$, acting on smooth
functions on $\OM$, is given as
$$\Delta^{\text{hor}}=\sum_{i=1}^{n} H_i^2.$$ To formulate the
relation between $\square$ and $\Delta^{\text{hor}}$, it is convenient
to write sections $a\in \Gamma(E)$ as equivariant functions
$F_a\colon\OM\to V$ via $F_a(u)=u^{-1}a_{\pi(u)}$ where we read
$u\in\OM$ as an isomorphism $u\colon V\acong E_{\pi(u)}$. Equivariance
means that $$F_a(ug)=g^{-1}F_a(u),\quad u\in\OM,\ g\in G=\On.$$

\begin{lemma}[see \cite{K-N}, p.\,115]
  For $a\in \Gamma(E)$ and $F_a$ the corresponding equivariant
  function on $\OM$, we have
  $$(H_i F_a)(u)=F_{\nabla_{ue_i}a}(u),\quad u\in\OM.$$
  Hence $$\Delta^{\text{\rm hor}}F_a=F_{\square a},$$ where as above
  $$\square\colon \Gamma(E)\overset{\nabla}{\longrightarrow}\Gamma(T^*
  M\otimes E) \overset{\nabla}{\longrightarrow}\Gamma(T^* M\otimes T^*
  M\otimes E) \overset{\text{\rm trace}}{\longrightarrow} \Gamma
  (E).$$
\end{lemma}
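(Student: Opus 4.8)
The plan is to reduce the statement to an infinitesimal identity on the frame bundle. By linearity and locality, it suffices to check the formula $(H_iF_a)(u)=F_{\nabla_{ue_i}a}(u)$ at a fixed point $u_0\in\OM$; since both sides are tensorial in $a$, I may work with $a$ in a convenient local form near $x_0=\pi(u_0)$.

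First I would choose a curve in $\OM$ whose initial velocity is the standard horizontal vector $H_i(u_0)=h_{u_0}(u_0e_i)$. The natural choice is $u(t)=\,/\!/_t\,u_0$, the horizontal lift of the geodesic $\gamma(t)=\exp_{x_0}(tu_0e_i)$, so that $\dot u(0)=H_i(u_0)$ and $u(t)$ realizes parallel transport along $\gamma$. Then by definition of the equivariant function associated to $a$,
\begin{equation*}
  (H_iF_a)(u_0)=\frac{d}{dt}\Big|_{t=0}F_a(u(t))=\frac{d}{dt}\Big|_{t=0}u(t)^{-1}a_{\gamma(t)}.
\end{equation*}
Since $u(t)\colon V\to E_{\gamma(t)}$ is exactly parallel transport $P_{0,t}$ along $\gamma$ precomposed with $u_0$, the quantity $u(t)^{-1}a_{\gamma(t)}=u_0^{-1}P_{t,0}\,a_{\gamma(t)}$, and the $t$-derivative of $P_{t,0}\,a_{\gamma(t)}$ at $t=0$ is by definition the covariant derivative $\nabla_{\dot\gamma(0)}a=\nabla_{u_0e_i}a$. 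Hence $(H_iF_a)(u_0)=u_0^{-1}\nabla_{u_0e_i}a=F_{\nabla_{u_0e_i}a}(u_0)$, which is the first claim.

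For the second claim, apply $H_i$ twice. Writing $b=\nabla a\in\Gamma(T^*M\otimes E)$ viewed through its equivariant function, the first identity applied iteratively gives $(H_i^2F_a)(u)=F_{\nabla^2a(ue_i,\,ue_i)}(u)$ up to the term coming from differentiating the frame; because $u(t)$ is \emph{horizontal}, the induced frame in $T^*M\otimes E$ is also parallel along $\gamma$, so no extra Christoffel-type correction appears and one genuinely gets the second covariant derivative evaluated on the pair $(ue_i,ue_i)$. Summing over $i=1,\dots,n$ and using that $\{ue_i\}$ is an orthonormal basis of $T_{\pi(u)}M$ yields $\sum_iH_i^2F_a=F_{\trace\nabla^2a}=F_{\square a}$, i.e. $\Delta^{\mathrm{hor}}F_a=F_{\square a}$.

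The only real subtlety is making the second step rigorous: one must be careful that $\nabla^2$ is the full (second) covariant Hessian and that differentiating $F_b$ along the horizontal curve $u(t)$ really produces $\nabla_{\dot\gamma}b$ with $b=\nabla a$, rather than just the ``$\gamma$-derivative'' of the components. This is handled precisely by the observation that horizontal lifts implement parallel transport in every associated bundle simultaneously, so iterating the first identity is legitimate; this is exactly the computation carried out in Kobayashi--Nozizaki \cite{K-N}, p.\,115. I would simply record the curve argument for $i$ fixed and then invoke this compatibility to pass to $H_i^2$ and sum.
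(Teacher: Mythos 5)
Your proof is correct and follows essentially the same route as the paper: both compute $(H_iF_a)(u)$ by differentiating $F_a$ along the horizontal lift of a curve with initial velocity $ue_i$, identifying that lift with parallel transport, and then obtain $\Delta^{\text{hor}}F_a=F_{\square a}$ by iterating (the paper leaves this last step implicit, while you spell out why the iteration is legitimate). The only cosmetic difference is your choice of the geodesic $\exp_{x_0}(tu_0e_i)$ where the paper takes an arbitrary curve with the right initial velocity; nothing in the argument depends on this.
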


\begin{proof}
  Fix $u\in \OM $ and choose a curve $\gamma$ in $M$ such that
  $\gamma(0)=\pi(u)$ and $\dot\gamma=ue_i$.  Let $t\mapsto u(t)$ be
  the horizontal lift of $\gamma$ to $\OM$ such that $u(0)=u$.  Note
  that $\dot{u}(t)=h_{u(t)}\left(\dot{\gamma}(t)\right)$, and in
  particular $\dot{u}(0)=h_u(ue_i)=H_i(u)$.  Hence, denoting the
  parallel transport along $\gamma$ by
  $\partr\varepsilon=u(\varepsilon)u(0)^{-1}$, we get
  \begin{align*}
    F_{\nabla_{ue_i}a}(u)&=u^{-1}\left(\nabla_{ue_i}a\right)_{\pi(u)}\\
                         &=u^{-1}\lim_{\varepsilon \downarrow 0}\frac{\invpartr\varepsilon 						   a_{\gamma(\varepsilon)}-a_{\gamma(0)}}{\varepsilon}\\
                         &=\lim_{\varepsilon\downarrow0} \frac{u(\varepsilon)^{-1}								   a_{\gamma(\varepsilon)}-u(0)^{-1}a_{\gamma(0)}}{\varepsilon}\\
                         &=\lim_{\varepsilon\downarrow0} \frac{F_a (u(\varepsilon))-							   F_a(u(0))}{\varepsilon}\\
                         &=(H_i)_u F_a\\
                         &= (H_iF_a)(u).
  \end{align*}
\end{proof}

Now consider diffusion processes $X_t$ on $M$ generated by the
operator
\begin{equation}
  \SL=\Delta + Z
\end{equation}
where $Z\in\Gamma(TM)$ is a smooth vector field.
% Assume, for simplicity, that $X_t$ is non-explosive.
Such diffusions on $M$ may be constructed from the corresponding
horizontal diffusions on $\OM$ generated by
$$\Delta^{\text{\rm hor}}+\bar Z$$ where the vector field $\bar Z$ is
the horizontal lift of $Z$ to $\OM$, i.e. $\bar Z_u=h_u(Z_{\pi(u)})$,
$u\in\OM$. More precisely, we start from the Stratonovich stochastic
differential equation on $\OM$,
\begin{equation}\label{Eq:StratoLaplianonOM}
  dU_t=\sum_{i=1}^{n}H_i(U_t)\circ dB^i_t+\bar Z(U_t)\,dt,\quad U_0=u\in\OM
\end{equation}
where $B_t$ is a Brownian motion on $\R^n$ sped up by $2$,
that is $dB^i_t dB^j_t = 2 \delta_{ij} \,dt$. Then for
$X_t=\pi (U_t)$, the following equation holds:
\begin{equation}\label{Eq:StratoLaplianonM}
  dX_t=\sum_{i=1}^{n}U_t e_i\circ dB^i_t+Z(X_t)\,dt,\quad X_0=x:=\pi u.
\end{equation}
The Brownian motion $ B$ is the martingale part of the anti-development
$\int_U\vartheta$ of $X$, where $\vartheta$ denotes the canonical
$1$-form $\vartheta$ on $\OM$,
i.e. $$\vartheta_u(e)=u^{-1}e_{\pi(u)},\quad e\in T_u\OM.$$ In
particular, for $F\in C^\infty(\OM)$, resp. $f\in C^\infty(M)$, we
have
\begin{align}
  d(F\circ U_t)&=\sum_{i=1}^{n}(H_i F)(U_t)\circ dB^i_t+(\bar ZF)(U_t)\,dt\notag \\
               &=\sum_{i=1}^{n} (H_i F)(U_t)\,dB^i_t+ \left(\Delta^{\text{hor}}+\bar Z\right)(F)(U_t)\,dt,\label{Eq:ItoOM} \\
  \intertext{respectively}
  d(f\circ X_t)&=\sum_{i=1}^{n} (df)(U_t e_i)\circ dB^i_t+(Zf)(X_t)\,dt\notag\\
               &=\sum_{i=1}^{n} (df)(U_t e_i)\,dB^i_t+\left(\Delta+Z\right)(f)(X_t)\,dt.\label{Eq:ItoM}
\end{align}
Typically, solutions to \eqref{Eq:StratoLaplianonM} are defined up to
some maximal lifetime $\zeta(x)$ which may be finite. Then we have,
almost surely,
$$\big\{\zeta(x)<\infty\big\}\subset \big\{X_t\to\infty\text{ as $t\uparrow\zeta(x)$}\big\}$$
where on the right-hand side, the symbol $\infty$ denotes the point at
infinity in the one-point compactification of $M$. It can be shown
that the maximal lifetime of solutions to equation
\eqref{Eq:StratoLaplianonOM} and to \eqref{Eq:StratoLaplianonM}
coincide, see e.g.~\cite{Shigekawa:82}.

In case of a non-trivial lifetime the subsequent stochastic equations
should be read for $t<\zeta(x)$.

\begin{prop}\label{Prop:covarIto}
  Let $\partr t\colon{E_{X_0}}\to{E_{X_t}}$ be parallel transport in
  $E$ along $X$, induced by the parallel transport on $M$,
  $${\partr t=U_tU_0^{-1}}\colon{T_{X_0}M}\to{T_{X_t}M}.$$ Then, for
  $a \in\Gamma(E)$, we have
  \begin{equation}\label{Eq:ItoRoughLapl}
    d\left(\invpartr t a(X_t)\right)=\sum_{i=1}^{n} \invpartr t \left(\nabla_{U_t e_i}a \right)\circ dB^i_t +\invpartr t \left(\nabla_Z a\right)(X_t)\,dt,
  \end{equation}
  respectively in It\^o form,
  \begin{equation}\label{Eq:StratoRoughLapl}
    d\left(\invpartr t a(X_t)\right)=\sum_{i=1}^{n}\invpartr t\left(\nabla_{U_t e_i}a\right)dB^i_t +\invpartr t\left(\square a+\nabla_Za \right)(X_t)\,dt.
  \end{equation}
  More succinctly, the last two equations may be written as
  $$d\left(\invpartr t a(X_t)\right)=\invpartr t\nabla_{\circ
    dX_t}a,$$ respectively
  $$d\left(\invpartr t a(X_t)\right)=\invpartr t
  \nabla_{dX_t}\alpha+\invpartr t(\square a)(X_t)\,dt.$$
\end{prop}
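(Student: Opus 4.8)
The plan is to reduce the covariant It\^o formula to the scalar It\^o formula \eqref{Eq:ItoOM} on the frame bundle, via the correspondence between sections of $E$ and equivariant functions on $\OM$. The starting point is the identity
$$\invpartr t a(X_t)=U_0\,U_t^{-1}a_{\pi(U_t)}=U_0\,F_a(U_t),$$
where $F_a(u)=u^{-1}a_{\pi(u)}$. Since $U_0$ is a fixed frame it does not depend on $t$, so the bounded linear map $U_0\colon V\to E_{X_0}$ commutes with stochastic differentiation; it therefore suffices to write down the semimartingale decomposition of the $V$-valued process $t\mapsto F_a(U_t)$ and then apply $U_0$ to every term. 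This is precisely why one works with $F_a(U_t)$ rather than trying to differentiate $\invpartr t$ directly: the transport $\partr t$ is $t$-dependent and cannot simply be pulled out of $d$.

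First I would feed $F=F_a$ into \eqref{Eq:ItoOM}, componentwise in $V$, in both its Stratonovich and its It\^o form:
$$d(F_a\circ U_t)=\sum_{i=1}^n (H_iF_a)(U_t)\circ dB^i_t+(\bar Z F_a)(U_t)\,dt=\sum_{i=1}^n (H_iF_a)(U_t)\,dB^i_t+\big(\Delta^{\text{hor}}F_a+\bar Z F_a\big)(U_t)\,dt.$$
Then I would invoke the Lemma above to rewrite the coefficients: $(H_iF_a)(u)=F_{\nabla_{ue_i}a}(u)$ and $\Delta^{\text{hor}}F_a=F_{\square a}$. For the $Z$-drift I would note that the proof of that Lemma applies verbatim with the standard horizontal field $H_i$ replaced by the horizontal lift $\bar Z$: taking a curve $\gamma$ with $\gamma(0)=\pi(u)$ and $\dot\gamma(0)=Z_{\pi(u)}$ and its horizontal lift $u(\cdot)$ through $u$, one has $\dot u(0)=h_u(Z_{\pi(u)})=\bar Z_u$, and the same telescoping of difference quotients gives $(\bar Z F_a)(u)=F_{\nabla_Za}(u)$.

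Substituting these identifications, using $F_b(U_t)=U_t^{-1}b_{X_t}$ together with $U_0U_t^{-1}=\invpartr t$, and applying $U_0$ term by term, one obtains
$$d\!\left(\invpartr t a(X_t)\right)=\sum_{i=1}^n\invpartr t\big(\nabla_{U_te_i}a\big)\circ dB^i_t+\invpartr t\big(\nabla_Za\big)(X_t)\,dt$$
and likewise the It\^o form with $\nabla_Za$ replaced by $\square a+\nabla_Za$, which are exactly \eqref{Eq:ItoRoughLapl} and \eqref{Eq:StratoRoughLapl}. The two succinct formulations are then purely notational: $\nabla_{\circ dX_t}a$ abbreviates $\sum_i\nabla_{U_te_i}a\circ dB^i_t+\nabla_Za\,dt$ in view of \eqref{Eq:StratoLaplianonM}, and passing to It\^o form converts the Stratonovich correction into the $(\square a)(X_t)\,dt$ term.

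I do not expect a genuine obstacle. The only points deserving a little care are: (i) the distinction that differentiation commutes with the constant isomorphism $U_0$ but not with the time-dependent transport $\partr t$; (ii) the harmless extension of the Lemma from the fields $H_i$ to the horizontal lift $\bar Z$; and (iii) avoiding a hand computation of the Stratonovich-to-It\^o bracket by simply quoting the It\^o form of \eqref{Eq:ItoOM} rather than recomputing the correction. All stochastic equalities are understood on the random interval $[0,\zeta(x))$, as the statement prescribes.
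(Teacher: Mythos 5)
Your proposal is correct and follows essentially the same route as the paper's own proof: write $\invpartr t a(X_t)=U_0F_a(U_t)$, apply the frame-bundle It\^o formula \eqref{Eq:ItoOM} to the equivariant function $F_a$, and identify the coefficients via the Lemma together with $\bar Z F_a=F_{\nabla_Z a}$ (which the paper merely asserts is easily checked, and which you justify by the same telescoping argument). The only difference is that you spell out this last identity and the role of the constant frame $U_0$ in slightly more detail.
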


\begin{proof}
  We have $\invpartr t a(X_t)=U_0 U_t^{-1} a(X_t)=U_0 F_a(U_t)$. It is
  easily checked that $ \bar Z F_a=F_{\nabla_Za}$.  Thus, we obtain
  from equation \eqref{Eq:ItoOM}
  \begin{align*}
    dF_a(U_t)&=\sum_{i=1}^{n} (H_i F_a)(U_t)\,dB^i_t+\left( \Delta^{\text{hor}}F_a+\bar Z F_a\right)(U_t) \,dt\\
             &=\sum_{i=1}^{n} \left(F_{\nabla_{U_t e_i}a}\right)(U_t)\,dB^i_t+\left( F_{\square a}+F_{\nabla_Z a}\right)(U_t) \,dt\\
             &=\sum_{i=1}^{n}
               U^{-1}\left(\nabla_{U_t
               e_i}a\right)(X_t)\,dB^i_t+
               U^{-1}_t\left(\square
               a+\nabla_Za\right)(X_t)
               \,dt.
                                 \end{align*}
                               \end{proof}

\begin{cor}
  Fix $T>0$ and let $a_t\in\Gamma(E)$ solve the equation
  $$\frac{\partial}{\partial t}a_t=\square a_t +\nabla_Za_t\quad
  \text{on}\ [0,T]\times M.$$ Then
  \begin{equation*}\label{Eq:RoughLaplMart}
    \invpartr t a^{\mstrut}_{T-t}\left(X_t\right), \quad 0\leq t< T\wedge\zeta(x),
  \end{equation*}
  is a local martingale.
\end{cor}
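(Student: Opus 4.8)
The plan is to apply the covariant It\^o formula of Proposition~\ref{Prop:covarIto} to the \emph{time-dependent} section $a_{T-t}$, keeping careful track of the extra drift term produced by differentiation in the time parameter, and then to observe that the parabolic equation satisfied by $a_t$ makes that extra term cancel precisely with the $\square a+\nabla_Z a$ term appearing in the It\^o form \eqref{Eq:StratoRoughLapl}. What then remains is a pure stochastic integral against the Brownian motion $B$, and hence a local martingale on $[0,T\wedge\zeta(x))$.

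Concretely, set $N_t:=\invpartr t a^{\mstrut}_{T-t}(X_t)=U_0\,F_{a_{T-t}}(U_t)$ and work on $\OM$, where $F_{a_s}\colon\OM\to V$ is the equivariant function associated with $a_s$. Since $(s,u)\mapsto F_{a_s}(u)$ is smooth on $[0,T]\times\OM$ by the assumed regularity of the solution $a_t$, one may apply It\^o's formula to the process $t\mapsto F_{a_{T-t}}(U_t)$ using \eqref{Eq:StratoLaplianonOM}. The spatial contribution is computed exactly as in the proof of Proposition~\ref{Prop:covarIto}, via \eqref{Eq:ItoOM} together with $H_iF_{a_s}=F_{\nabla_{ue_i}a_s}$ and $\bar Z F_{a_s}=F_{\nabla_Z a_s}$, while the time contribution is $\partial_t F_{a_{T-t}}=-F_{\square a_{T-t}+\nabla_Z a_{T-t}}$, because $\tfrac{d}{dt}a_{T-t}=-(\square a_{T-t}+\nabla_Z a_{T-t})$ by the equation. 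Combining these,
\begin{align*}
  dN_t ={}& \sum_{i=1}^{n} \invpartr t\big(\nabla_{U_t e_i}a_{T-t}\big)\,dB^i_t\\
  &+ \invpartr t\big(\square a_{T-t}+\nabla_Z a_{T-t}\big)(X_t)\,dt - \invpartr t\big(\square a_{T-t}+\nabla_Z a_{T-t}\big)(X_t)\,dt,
\end{align*}
so the two drift terms cancel and $dN_t=\sum_{i=1}^{n}\invpartr t(\nabla_{U_t e_i}a_{T-t})\,dB^i_t$. As this is a stochastic integral with respect to $B$ whose integrand is the continuous adapted process $t\mapsto\invpartr t(\nabla_{U_t e_i}a_{T-t})$, the process $N_t$ is a local martingale on $[0,T\wedge\zeta(x))$; a localizing sequence is furnished by the exit times $\zeta_k\uparrow\zeta(x)$ of a relatively compact exhaustion of $M$, along each of which $U_t$ stays in a compact set and the integrand is therefore bounded.

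The only point requiring a little care is the time-dependent version of It\^o's formula invoked above, since Proposition~\ref{Prop:covarIto} is stated for a fixed section. This is most cleanly justified by viewing $(t,U_t)$ as a single diffusion on $[0,T]\times\OM$ and applying the ordinary It\^o formula there, noting that the deterministic time coordinate has vanishing quadratic covariation with the $B^i$; alternatively, one appeals to the evident time-dependent analogue of Proposition~\ref{Prop:covarIto}, whose proof is identical to the one already given except for the additional $\partial_t$ term. No new idea beyond this bookkeeping is needed, so I expect the routine verification of the time-dependent It\^o formula to be the main—and rather mild—obstacle.
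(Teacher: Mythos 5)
Your argument is correct and is essentially the paper's own proof: the authors likewise apply the (time-dependent) covariant It\^o formula of Proposition~\ref{Prop:covarIto} to $\invpartr t a^{\mstrut}_{T-t}(X_t)$ and observe that the drift $\invpartr t\bigl(\square a_{T-t}+\nabla_Z a_{T-t}+\tfrac{\partial}{\partial t}a_{T-t}\bigr)(X_t)\,dt$ vanishes by the evolution equation, leaving only the differential of a local martingale. Your extra care in justifying the time-dependent It\^o formula and exhibiting a localizing sequence is sound bookkeeping that the paper leaves implicit.
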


\begin{proof}
  Indeed we have
  $$d(\invpartr t a_{T-t}\left(X_t\right))\mequal\invpartr
  t\underbrace{\left(\square
      a_{T-t}+\nabla_Za_t+\frac{\partial}{\partial
        t}a_{T-t}\right)}_{=0}\left(X_t\right)\,dt=0,$$
  where $\mequal$ denotes equality modulo differentials of local
  martingales.
\end{proof}

We are now going to look at operators $\SL^{\SR}$ on $\Gamma(E)$ which
differ from $\square$ by a zero-order term, in other words,
\begin{equation}\label{Eq:Weitzenbock}
  \square-\SL^{\SR}=\SR\quad \text{where }\SR \in \Gamma(\End E).
\end{equation}
Thus, by definition, the action $\SR_x\colon E_x\to E_x$ is linear for
each $x\in M$.

\begin{example}
  A typical example is $E=\Lambda^p T^*M$ and
  $A^p(M)=\Gamma(\Lambda^p T^*M)$ with $p\geq1$. The
  \textit{de~Rham-Hodge
    Laplacian} $$\Delta^{(p)}=-(d^*d+dd^*):A^p(M)\to A^p(M)$$ then
  takes the form $$\Delta^{(p)} \alpha=\square \alpha-\SR \alpha$$
  where $\SR$ is given by the Weitzenb\"ock decomposition. In the
  special case $p=1$, one obtains
  $\SR\alpha=\Ric(\alpha^{\sharp},\newdot)$ where
  $\Ric\colon TM\oplus TM \to \R$ is the Ricci tensor.
  % , or equivalently $\SR \alpha = \Ric^{\sharp \tra} \alpha$.
\end{example}

\begin{definition}\label{Def:pathwiseQ}
  Fix $x\in M$ and let $X_t$ be a diffusion to $\SL=\Delta+Z$, starting
  at $x$. Let $Q_t$ be the $\Aut(E_x)$-valued process defined by the
  following linear pathwise differential
  equation $$\frac{d}{dt}Q_t=-Q_t\SR_{\partr t},\quad Q_0=\id_{E_x},$$
  where
  $$\SR_{\partr t}:=\invpartr t\circ\SR_{X_t}\circ\partr
  t\in\End(E_x)$$
  and $\partr t$ is parallel transport in $E$ along~$X$.
\end{definition}

\begin{prop}\label{Prop:diffRoughLapl}
  Let $\SL^{\SR}=\square-\SR$ be as in equation \eqref{Eq:Weitzenbock}
  and $X_t$ be a diffusion to $\SL=\Delta+Z$, starting at $x$. Then, for
  any $a\in\Gamma(E)$,
  $$d\left(Q_t \invpartr t a(X_t)\right) =\sum_{i=1}^{n} Q_t \invpartr
  t\left(\nabla_{U_te_i}a\right)\,dB^i_t+Q_t \invpartr t \left(\square
    a+\nabla_Za-\SR a\right)(X_t)\,dt.$$
\end{prop}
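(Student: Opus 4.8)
The plan is to combine the product rule for It\^o differentials with Proposition~\ref{Prop:covarIto}, treating the finite-variation factor $Q_t$ and the semimartingale $\invpartr t a(X_t)$ separately. First I would recall from Proposition~\ref{Prop:covarIto}, specifically equation \eqref{Eq:StratoRoughLapl}, that in It\^o form
$$d\left(\invpartr t a(X_t)\right)=\sum_{i=1}^n\invpartr t\left(\nabla_{U_te_i}a\right)dB^i_t+\invpartr t\left(\square a+\nabla_Za\right)(X_t)\,dt.$$
Since $t\mapsto Q_t$ has paths of bounded variation (it solves a pathwise linear ODE), the It\^o product rule has no bracket term, so
$$d\left(Q_t\invpartr t a(X_t)\right)=\left(dQ_t\right)\invpartr t a(X_t)+Q_t\,d\left(\invpartr t a(X_t)\right).$$

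Next I would substitute the two ingredients. From Definition~\ref{Def:pathwiseQ} we have $dQ_t=-Q_t\SR_{\partr t}\,dt$, so the first term becomes
$$-Q_t\SR_{\partr t}\invpartr t a(X_t)\,dt=-Q_t\left(\invpartr t\circ\SR_{X_t}\circ\partr t\right)\invpartr t a(X_t)\,dt=-Q_t\invpartr t\left(\SR a\right)(X_t)\,dt,$$
where the middle equality just unfolds the definition $\SR_{\partr t}=\invpartr t\circ\SR_{X_t}\circ\partr t$ and the last uses $\partr t\invpartr t=\id_{E_{X_t}}$ together with $\SR_{X_t}\,a(X_t)=(\SR a)(X_t)$. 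For the second term I multiply the It\^o expansion of $d(\invpartr t a(X_t))$ on the left by $Q_t$, which is legitimate since $Q_t$ is adapted and of finite variation: this contributes $\sum_i Q_t\invpartr t(\nabla_{U_te_i}a)\,dB^i_t$ plus $Q_t\invpartr t(\square a+\nabla_Za)(X_t)\,dt$. Adding the two contributions and collecting the $dt$ terms gives exactly $Q_t\invpartr t(\square a+\nabla_Za-\SR a)(X_t)\,dt$, which is the claimed formula.

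There is essentially no serious obstacle here; the only point requiring a little care is the bookkeeping of where $Q_t$ sits relative to the parallel transport operators — one must verify that $Q_t\SR_{\partr t}\invpartr t=Q_t\invpartr t\SR_{X_t}$ as maps $E_{X_t}\to E_x$, which is immediate from the definition of $\SR_{\partr t}$. A secondary, more pedantic point is that everything should be understood on the stochastic interval $[0,T\wedge\zeta(x))$, matching the convention in the excerpt that these equations are read for $t<\zeta(x)$; since $Q_t$ is defined pathwise along $X$ up to the same lifetime, no extra hypotheses are needed. One could alternatively run the whole computation in Stratonovich form using \eqref{Eq:ItoRoughLapl}, noting that $Q_t$ being of bounded variation means $\circ\,dQ_t=dQ_t$ and that the Stratonovich and It\^o products of $Q_t$ with the semimartingale agree, then converting to It\^o at the end; this gives the same answer and may be presented as a remark, but the direct It\^o computation above is the cleanest route.
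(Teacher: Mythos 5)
Your proposal is correct and follows the paper's own argument exactly: the product rule with no bracket term (since $Q_t$ has paths of bounded variation), substitution of $dQ_t=-Q_t\SR_{\partr t}\,dt$ with the identification $\SR_{\partr t}\invpartr t=\invpartr t\SR_{X_t}$, and then Proposition \ref{Prop:covarIto} for $d(\invpartr t a(X_t))$. Your added remarks on the lifetime and the Stratonovich alternative are harmless elaborations; no gap.
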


\begin{proof}
  Let $n_t:=\invpartr ta(X_t)$. Then
  \begin{align*}
    d(Q_tn_t)&=(dQ_t)\,n_t+Q_t\,dn_t\\
             &=-Q_t\invpartr t \SR_{X_t}\invpartr t n_t\,dt+Q_t\,dn_t\\
             &=-Q_t\invpartr t (\SR a)(X_t)\,dt+Q_t\,dn_t.
  \end{align*}
  The claim thus follows from Proposition \ref{Prop:covarIto}.
\end{proof}

\begin{cor}\label{cor:martpropgen}
  Fix $T>0$ and let $X_t(x)$ be a diffusion to $\SL=\Delta+Z$, starting
  at $x$. Suppose that $a_t$ solves
$$\left\{\begin{aligned} &\frac{\partial}{\partial t}a_t=\left( \square-\SR+\nabla_Z\right)a_t\quad \text{on}\ [0,T]\times M,\\ &a_t|_{t=0}=a \in \Gamma(E). \end{aligned}\right.$$ 
Then
\begin{equation}\label{Eq:locmart}
  N_t:=Q_t\invpartr t a_{T-t}\left(X_t(x)\right),\quad0\leq t< T\wedge\zeta(x),
\end{equation}
is a local martingale, starting at $a_T(x)$. In particular, if
$\zeta(x)=\infty$ and if equation \eqref{Eq:locmart} is a true
martingale on $[0,T]$, we arrive at the formula
$$a_T(x)=\E\big[Q_T \invpartr T a(X_T(x))\big],\quad a\in\Gamma(E).$$
\end{cor}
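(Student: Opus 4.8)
The plan is to apply Proposition~\ref{Prop:diffRoughLapl} to the time-reversed section $b_t := a_{T-t}$, noting first that that proposition extends verbatim to smoothly time-dependent sections provided one adds the corresponding $\partial_t$-term to the drift (exactly as in the corollary following Proposition~\ref{Prop:covarIto}, which treats the case $\SR = 0$). Concretely, setting $n_t := \invpartr t b_t(X_t)$ and repeating the short computation in the proof of Proposition~\ref{Prop:diffRoughLapl} --- i.e.\ $d(Q_t n_t) = (dQ_t)\,n_t + Q_t\,dn_t$ with $dQ_t = -Q_t \invpartr t \SR_{X_t}\, \partr t\,dt$ by Definition~\ref{Def:pathwiseQ}, and $dn_t$ given by the time-dependent version of Proposition~\ref{Prop:covarIto} --- one obtains
$$d(Q_t n_t) = \sum_{i=1}^n Q_t \invpartr t\left(\nabla_{U_t e_i} b_t\right)dB^i_t + Q_t \invpartr t\Big(\square b_t + \nabla_Z b_t - \SR b_t + \partial_t b_t\Big)(X_t)\,dt,$$
the term $\partial_t b_t$ arising from the explicit time dependence of $b_t$.

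Next I would insert the evolution equation. Since $b_t = a_{T-t}$ and $a_s$ solves $\partial_s a_s = (\square - \SR + \nabla_Z)a_s$ on $[0,T]$, the chain rule gives $\partial_t b_t = -(\square - \SR + \nabla_Z)a_{T-t} = -(\square + \nabla_Z - \SR)b_t$, so the bracketed drift above vanishes identically for $t \in [0,T)$. Hence $d(Q_t n_t) \mequal 0$ and $N_t = Q_t n_t$ is a local martingale on $[0, T\wedge\zeta(x))$. Evaluating at $t = 0$ and using $Q_0 = \id_{E_x}$, $\invpartr 0 = \id_{E_x}$, $b_0 = a_T$ and $X_0 = x$ gives $N_0 = a_T(x)$.

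For the final assertion, if $\zeta(x) = \infty$ then $N_t$ is defined on all of $[0,T]$ and $X_T(x)$ is well defined; assuming in addition that $N$ is a true martingale on $[0,T]$, the identity $N_0 = \E[N_T]$ applied to $N_T = Q_T \invpartr T b_T(X_T(x)) = Q_T \invpartr T a(X_T(x))$ yields $a_T(x) = \E\big[Q_T \invpartr T a(X_T(x))\big]$.

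I do not expect a genuine obstacle: the content is a direct combination of Proposition~\ref{Prop:diffRoughLapl} with the reversal $t \mapsto T-t$, and the only points requiring attention are the sign of the time derivative under that reversal and the observation that non-explosivity is precisely what allows one to run the process up to the fixed terminal time $T$ --- the true-martingale hypothesis itself being assumed rather than something to establish here.
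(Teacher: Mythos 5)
Your proposal is correct and follows essentially the same route as the paper: the paper's (one-line) proof likewise applies the time-dependent version of Proposition~\ref{Prop:diffRoughLapl} to $a_{T-t}$, observes that the drift $Q_t\invpartr t\bigl((\square+\nabla_Z-\SR)a_{T-t}+\partial_t a_{T-t}\bigr)(X_t)\,dt$ vanishes by the evolution equation, and leaves the passage to expectations implicit. Your sign bookkeeping under the reversal $t\mapsto T-t$ and the remark on non-explosivity are both accurate.
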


\begin{proof}
  Indeed, we have
  \begin{equation*}
    dN_t\mequal Q_t\invpartr t \underbrace{\left((\square+\nabla_Z-\SR)a^{\mstrut}_{T-t}+\frac{\partial}{\partial t}a^{\mstrut}_{T-t}\right)}_{=0} (X_t)\,dt=0
  \end{equation*}
  as required. \qedhere
\end{proof}

\begin{remark}
  Note that
  $$\frac{d}{dt}Q_t=-Q_t\SR_{\partr t},\quad \text{with
    $Q_0=\id_{E_x}$},$$ implies the obvious estimate
  $$\|Q_t\|_{\text{op}}\le\exp\left(-\int_0^t
    \underline{\SR}(X_s(x))ds\right)$$
  where
  $\underline{\SR}(x)=\inf\left\{\langle\SR_x v,w\rangle\colon v,w\in
    E_x,\ \|v\|\leq1 \text{ and }\|w\|\leq1\right\}$.
\end{remark}

\subsection{Commutation formulae}

In the sequel, we consider the special case $E=T^*M$. Thus $\Gamma(E)$
is the space of differential $1$-forms on $M$. The results of this
section apply to vector fields as well, by identifying vector fields
$V\in\Gamma(TM)$ and $1$-forms $\alpha\in\Gamma(T^*M)$ via the metric:
$$V\longleftrightarrow
V^\flat,\quad\alpha\longleftrightarrow\alpha^\#.$$
Let $Z\in\Gamma(TM)$ be a vector field on $M$. Then the divergence of
$Z$, denoted by $\divv Z\in\SC^{\infty}(M)$, is defined by
$\divv Z:=\ \trace(v\mapsto\nabla_v Z).$ Therefore
\begin{equation}
  (\divv Z)(x)=\sum_{i=1}^n \langle\nabla_{v_i}X,v_i\rangle
\end{equation}
for any orthonormal basis $\lbrace v_i \rbrace_{i=1}^n$ for
$T_xM$. For compactly supported $f$ we have
\begin{equation}
  \langle Z,\nabla f\rangle_{L^2(TM)}=-\langle \divv Z,f\rangle_{L^2(M)}.
\end{equation}
The adjoint $Z^*$ of $Z$ is given by the relation
$$Z^*f=-Zf-(\divv Z)f,\quad f\in C^\infty(M).$$ If either $f$ or $h$
is compactly supported, this implies
\begin{equation}
  \langle Zf,h\rangle_{L^2(M)}=\langle f,Z^*h\rangle_{L^2(M)}.
\end{equation}
Similarly, for
$\alpha\in\Gamma(T^\ast M)$, we let
$$(\divv\alpha)(x)=\trace\bigl(T_xM\buildrel{\!\!\!\nabla\alpha}\over\longrightarrow
T^\ast_xM\buildrel\#\over\longrightarrow T_xM\bigr).$$
Thus $\divv Y=\divv Y^\flat$ and $\divv\alpha=\divv\alpha^\#$. That
is, if $\delta = d^\ast$ denotes the usual codifferential then
$\divv \alpha = -\delta \alpha$. Finally, we define
\begin{equation*}
  \Ric_Z(X,Y):=\Ric(X,Y)-\langle\nabla_XZ,Y\rangle,\quad X,Y\in\Gamma(TM).
\end{equation*}

\begin{notation}\label{notatation:Ric} 
  For the sake of convenience, we read bilinear forms on $M$, such as
  $\Ric_Z$, likewise as sections of $\End(T^*M)$ or $\End(TM)$, e.g.
  \begin{align}
    \Ric_Z(\alpha)&:=\Ric_Z(\newdot,\alpha^\sharp),\quad\alpha\in T^*M,\\
    \Ric_Z(v)&:=\Ric_Z(v,\newdot)^\sharp,\quad v\in TM.
  \end{align}
  If there is no risk of confusion, we do not distinguish in notation.
  In particular, depending on the context, $(\Ric_Z)_{\partr t}$ may
  be a random section of $\End(T^*M)$ or of $\End(TM)$.
\end{notation}

\begin{lemma}[Commutation rules]\label{Lemma:CommRules}
  Let $Z\in\Gamma(TM)$.
  \begin{enumerate}[\rm(1)]
  \item For the differential $d$, we
    have $$d\big(\Delta+Z\big)=\big(\square-\Ric_Z+\nabla_Z\big)d;$$
  \item for the codifferential $d^*=-\divv$, we have
$$\big(\Delta+Z^*\big)d^*=d^*\big(\square-\Ric_Z^{*}+\nabla^*_Z\big),$$
where the formal adjoint of $\nabla_Z$ (acting on $1$-forms) is
$\nabla^*_Z\alpha=-\nabla_Z\alpha-(\divv Z)\alpha$.
\end{enumerate}
\end{lemma}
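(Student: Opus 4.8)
The plan is to prove the two commutation rules by a routine but careful computation in a local orthonormal frame, and then observe that (2) is the formal adjoint of (1). For (1), I would start from the classical Weitzenbock formula $d\Delta_{dR}^{(1)} = \Delta_{dR}^{(1)} d$ on functions, but it is cleaner to work directly: for $f \in C^\infty(M)$ we want to compare $d(\Delta f + Zf)$ with $(\square - \Ric_Z + \nabla_Z)(df)$. The term $d(\Delta f)$ is handled by the Bochner identity $d(\Delta f) = \square(df) - \Ric(df)$ (equivalently, $\nabla \mathrm{grad} f$ has trace differing from $d\Delta f$ by the Ricci term — this is exactly the $p=1$ Weitzenbock decomposition quoted in the Example after \eqref{Eq:Weitzenbock}). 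So the whole content of (1) is the identity
\begin{equation}
  d(Zf) = \nabla_Z(df) - \Ric_Z(df) + \Ric(df),
\end{equation}
i.e. $d(Zf) = \nabla_Z(df) + \langle \nabla_{(\newdot)} Z, (df)^\sharp\rangle$, since by definition $\Ric_Z = \Ric - \langle\nabla_{(\newdot)}Z,\newdot\rangle$. This is just the statement that for vector fields $V,W$,
\begin{equation}
  (d(Zf))(V) = V(Zf) = V(\langle Z,\mathrm{grad} f\rangle) = \langle \nabla_V Z,\mathrm{grad} f\rangle + \langle Z,\nabla_V \mathrm{grad} f\rangle = \langle\nabla_V Z, \mathrm{grad} f\rangle + (\nabla_Z df)(V),
\end{equation}
using that $\nabla_V \mathrm{grad}f = \nabla (df)^\sharp$ is symmetric in the appropriate sense (Hessian symmetry). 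Assembling the two pieces gives (1). The one point requiring care is the sign and placement of indices in $\Ric_Z$ versus $\Ric_Z^*$, and remembering that $\nabla_Z$ here means the derivative of the $1$-form, not of its dual vector field; I would fix conventions once at the outset to avoid sign errors.

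For (2), rather than recomputing, I would take formal adjoints in $L^2$ of the operator identity in (1), using compactly supported test forms. We have $d^* = -\divv$ adjoint to $d$, and $\square$ is formally self-adjoint (stated in the Preliminaries: $\square = -\nabla^*\nabla$). Taking the adjoint of $d(\Delta + Z) = (\square - \Ric_Z + \nabla_Z)d$ gives $(\Delta + Z)^* d^* = d^*(\square - \Ric_Z^* + \nabla_Z^*)$ on $1$-forms, where $(\Delta + Z)^* = \Delta + Z^*$ because $\Delta$ is self-adjoint and $Z^* f = -Zf - (\divv Z)f$ as recorded above, and $\Ric_Z^*$ is the pointwise transpose endomorphism, and $\nabla_Z^*$ is the formal adjoint of $\nabla_Z$ acting on $1$-forms. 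It remains only to verify the stated formula $\nabla_Z^*\alpha = -\nabla_Z\alpha - (\divv Z)\alpha$: this follows from the product rule $\divv(\langle\alpha,\beta\rangle Z) = \langle\nabla_Z\alpha,\beta\rangle + \langle\alpha,\nabla_Z\beta\rangle + (\divv Z)\langle\alpha,\beta\rangle$ after integrating and discarding the total-divergence term.

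The main obstacle I anticipate is purely bookkeeping: keeping the three flavors of "$Z$-derivative" ($Zf$ on functions, $\nabla_Z$ on $1$-forms, and $\nabla_Z^*$ its adjoint) and the two flavors of Ricci-type endomorphism ($\Ric_Z$ and its transpose $\Ric_Z^*$) straight, together with the convention — flagged in Notation \ref{notatation:Ric} — that bilinear forms are silently reinterpreted as endomorphisms of either $T^*M$ or $TM$. Once the conventions are pinned down, (1) is a two-line Hessian-symmetry argument plus the quoted $p=1$ Weitzenbock identity, and (2) is a one-line adjointness argument; no genuinely hard estimate or new idea is needed.
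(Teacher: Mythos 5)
Your proposal is correct and follows essentially the same route as the paper: part (1) via the $p=1$ Weitzenb\"ock identity $d\Delta f=\Delta^{(1)}df=(\square-\Ric)(df)$ combined with the Leibniz/Hessian-symmetry computation $d(Zf)=\nabla_Z df+\langle\nabla_{\bull}Z,\nabla f\rangle$, and part (2) by formal $L^2$-duality from (1). The only difference is that you spell out the verification of $\nabla_Z^*\alpha=-\nabla_Z\alpha-(\divv Z)\alpha$ and of the adjoint-taking step, which the paper leaves implicit with ``(2) is just dual to (1)''.
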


\begin{proof}
  Indeed, for any smooth function $f$ we have
  \begin{align*}
    d\big(\Delta+Z\big)f&=d\big(-d^*df+(df)Z\big)\\
                        &=\Delta^{(1)}df+\nabla_Z df+\langle\nabla_\bull Z,\nabla f\rangle\\
                        &=(\square+\nabla_Z )(df)-\Ric_Z(\newdot,\nabla f)\\
                        &=\big(\square-\Ric_Z+\nabla_Z\big)(df).
  \end{align*}
  The formula in (2) is then just dual to $(1)$.
\end{proof}

\subsection{A formula for the differential}\label{ss:formdif}

Now, let $X_t(x)$ be a diffusion to $\Delta+Z$ on $M$, starting at
$X_0(x)=x$, $U_t$ a horizontal lift of $X_t(x)$ to $\OM$ and
$B=U_0\int_U\vartheta$ the martingale part of the anti-development of
$X_t(x)$ to $T_xM$. Let $Q_t$ be the $\Aut(T_x^\ast M)$-valued process
defined by
\begin{equation}
  \frac{d}{dt}Q_t=- Q_t \left(\Ric_Z\right)_{\partr t}
\end{equation}
with $Q_0=\id_{T_x^\ast M}$, let
\begin{equation}
  P_tf(x)=\E\left[\1_{\{t<\zeta(x)\}}f(X_t(x))\right]
\end{equation}
be the minimal semigroup generated by $\Delta+Z$ on $M$, acting on
bounded measurable functions $f$.

Fix $T>0$ and let $\ell_t$ be an adapted process with paths in the
Cameron-Martin space $L^{1,2}([0,T];T_{x}M)$. By Corollary
\ref{cor:martpropgen}
\begin{equation}\label{eq:LocMart_Pdf}
  N_t := Q_t \invpartr t  (dP_{T-t}f),\quad t<T\wedge\zeta(x),
\end{equation}
is local martingale. Therefore
\begin{equation}
  N_t( {\ell}_t) -\int_0^t Q_s \invpartr s  (dP_{T-s}f)( \dot{{\ell}}_s)ds
\end{equation}
is a local martingale. By integration by parts
\begin{equation}
  \int_0^t Q_s \invpartr s  (dP_{T-s}f)(\dot{\ell}_s )ds - \frac12(P_{T-t}f)(X_t(x)) \int_0^t \langle Q_s^{\tra} (\dot{\ell}_s), dB_s\rangle
\end{equation}
is also a local martingale and therefore
\begin{equation}\label{eq:locmartingzero}
  Q_t \invpartr t (dP_{T-t}f) ( {\ell}_t ) -\frac12(P_{T-t}f)(X_t(x))\int_0^t \langle Q_s^{\tra} \dot{\ell}_s ,dB_s\rangle
\end{equation}
is a local martingale, starting at $(dP_Tf)(\ell_0)$. Choosing
$\ell_t$ so that \eqref{eq:locmartingzero} is a true martingale on
$[0,T]$ with $\ell_0=v$ and $\ell_T=0$, we obtain the formula
\begin{equation}\label{eq:locformzeroform}
  (dP_Tf)(v)  = - \frac12\E\left[\1_{\{T<\zeta(x)\}}f(X_T(x)) \int_0^{T} \langle Q_s^{\tra} \dot{\ell}_s ,dB_s\rangle \right].
\end{equation}
For further details, see \cite{Thalmaier97,Thalmaier98}. Denoting by
$p_t(x,y)$ the smooth heat kernel associated to $\Delta +Z$, since
formula \eqref{eq:locformzeroform} holds for all smooth functions $f$
of compact support, it implies Bismut's formula
\begin{equation}
  (d \log p_T(\newdot,y))_x(v) = -\frac12\E\left[ \int_0^{\tau \wedge T} \langle Q_s^{\tra} \dot{\ell}_s, dB_s\rangle \big\vert \, X_T(x) = y \right].
\end{equation}
The argument leading to formula \eqref{eq:locformzeroform} is based on
the fact that the local martingale \eqref{eq:locmartingzero} is a true
martingale. Since the condition on $\ell_t$ is imposed on the left
endpoint, this can always be achieved, by taking $\ell_s=0$ for
$s\geq \tau\wedge T$ where $\tau$ is the first exit time of some
relatively compact neighbourhood of $x$. No bounds on the geometry are
needed; also explosion in finite times of the underlying diffusion can
be allowed.  For the problem of constructing appropriate finite energy
processes $\ell_s$ with the property $\ell_s=0$ for
$s\geq \tau\wedge T$, see \cite{Thalmaier98}, resp.~\cite[Lemma
4.3]{ThalmaierWang11}.

Imposing in \eqref{eq:locmartingzero} however the conditions
$\ell_0=0$ and $\ell_T=v$ would lead to a formula for
$$\E\left[ Q_T\invpartr T(df)_{X_T(x)}(v)\right]$$ not involving
derivatives of $f$, which clearly requires strong assumptions. If the
local martingale \eqref{eq:LocMart_Pdf} is a true martingale, we get
the formula
$$\left(dP_Tf\right)_x(v)=\E\left[ Q_T\invpartr
  T(df)_{X_T(x)}(v)\right].$$
For such a formula to hold, obviously $X_t(x)$ needs to be
non-explosive.

\subsection{A formula for the codifferential}

Recall that, according to Lemma \ref{Lemma:CommRules}, we have
\begin{equation}\label{eq:comrulediv}
  \big(\Delta+Z+\divv Z\big)\divv=\divv\big(\square+\nabla_Z-\Ric_{-Z}^{*}+\divv Z\big).
\end{equation}
For a bounded $1$-form $\alpha$ suppose $\alpha_t$ satisfies
\begin{equation}\label{eq:Semigroupforms}
  \frac{d}{dt}\alpha_t=\left( \square+\nabla_Z-\Ric_{-Z}^{*}+\divv Z\right)\alpha_t
\end{equation}
with $\alpha^{\mstrut}_0 = \alpha$, where $\divv Z$ acts fibrewise as
a multiplication operator, and that $\Theta_t$ is the
$\Aut(T_xM)$-valued process which solves
\begin{equation}\label{Eq:Qtr}
  \frac{d}{dt}\Theta_t=-(\Ric_{-Z}^{*}-\divv Z)_{\partr t}\Theta_t
\end{equation}
with $\Theta_0=\id_{T_xM}$. Here $\Ric_{-Z}^{*}$ is the adjoint to
$\Ric_{-Z}$ acting as endomorphism of $T_xM$, see
Notation~\ref{notatation:Ric}.

\begin{remark} We have $\Theta_t=Q_t^{\tra}$ if we set
  $\SR:=\Ric_{-Z}^{*}-\divv Z\in\End(T^*M)$ and define $Q_t$ via
  Definition \ref{Def:pathwiseQ}.
\end{remark}

\begin{prop}
  Fix $T>0$. Let $X_t(x)$ be a diffusion to $\Delta+Z$ on $M$,
  starting at $x$.
  \begin{enumerate}[\rm(i)]
  \item Then
    \begin{equation}
      (\divv\alpha^\mstrut_{T-t})(X_t(x))\,
      \exp \left(\int_0^t (\divv Z)(X_s(x))\,ds\right)  
    \end{equation}
    is a local martingale, starting at $\divv\alpha^{\mstrut}_T$.
  \item Suppose $h_t$ is an adapted process with paths in
    $L^{1,2}([0,T];\R)$. Then
    \begin{equation}\label{LocMartInt1}
      \begin{split}
        &\divv \alpha^{\mstrut}_{T-t} h_t - \frac12
        \alpha^{\mstrut}_{T-t} \left( \partr t \Theta_t \int_0^t
          \left(\dot{h}_s -(\divv Z)(X_s(x))\,h_s\right) \Theta_s^{-1}
          \invpartr s dB_s\right)
      \end{split}
    \end{equation}
    is a local martingale, starting at $\divv \alpha^{\mstrut}_T h_0$.
  \end{enumerate}
\end{prop}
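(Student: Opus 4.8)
The plan is to treat the two parts in order, deriving (i) from the commutation rule \eqref{eq:comrulediv} by a Feynman--Kac argument, and then obtaining (ii) from an integration-by-parts identity that links the scalar local martingale of (i) to the covariant $1$-form martingale supplied by Corollary \ref{cor:martpropgen}. For (i): applying $\divv$ to \eqref{eq:Semigroupforms} and invoking \eqref{eq:comrulediv} shows that the scalar function $u_t := \divv\alpha_t$ solves $\partial_t u_t = (\Delta + Z + \divv Z)u_t$, so that $g(t,x) := u_{T-t}(x)$ satisfies $\partial_t g + (\Delta + Z)g + (\divv Z)\,g = 0$. Since $\Delta + Z$ generates $X_t$, It\^o's formula gives $d\,g(t,X_t(x)) \mequal -(\divv Z)(X_t(x))\,g(t,X_t(x))\,dt$, and multiplying by the finite-variation factor $\exp(\int_0^t(\divv Z)(X_s(x))\,ds)$ removes this drift; the product is therefore a local martingale, equal at $t=0$ to $u_T(x)=\divv\alpha_T(x)$.

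For (ii): recall from the remark preceding the statement that, with $\SR := \Ric_{-Z}^{*} - \divv Z$, one has $\Theta_t = Q_t^{\tra}$, and that Corollary \ref{cor:martpropgen} applies to the $1$-form $\alpha_t$ solving \eqref{eq:Semigroupforms}; hence $N_t := Q_t\invpartr t\alpha_{T-t}(X_t(x))$ is a $T_x^{*}M$-valued local martingale whose drift cancels, so that by Proposition \ref{Prop:diffRoughLapl} its martingale part is $dN_t \mequal \sum_i Q_t\invpartr t(\nabla_{U_t e_i}\alpha_{T-t})\,dB^i_t$. Put $J_t := \int_0^t(\dot h_s - (\divv Z)(X_s(x))h_s)\,\Theta_s^{-1}\invpartr s\,dB_s$, a $T_xM$-valued local martingale. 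The algebraic heart of the argument is the identity $\alpha_{T-t}(\partr t\Theta_t J_t) = N_t(J_t)$, obtained by moving $\partr t$ and $\Theta_t$ across the pairing and using $\Theta_t^{\tra}=Q_t$: namely $\alpha_{T-t}(\partr t\Theta_t J_t) = (\invpartr t\alpha_{T-t}(X_t))(\Theta_t J_t) = (\Theta_t^{\tra}\invpartr t\alpha_{T-t}(X_t))(J_t) = (Q_t\invpartr t\alpha_{T-t}(X_t))(J_t)$. Thus the second summand of \eqref{LocMartInt1} is exactly $\tfrac12 N_t(J_t)$.

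It then remains to compare drifts. Being a product of two local martingales, $\tfrac12 N_t(J_t)$ has drift $\tfrac12\,d[N,J]_t$; using $dB^i_t\,dB^j_t = 2\delta_{ij}\,dt$, the pairing in the covariation collapses because $(\Theta_t^{-1})^{\tra}Q_t = Q_t^{-1}Q_t = \id$, leaving $\sum_i(\nabla_{U_t e_i}\alpha_{T-t})(U_t e_i) = \divv\alpha_{T-t}(X_t)$ by orthonormality of $\{U_t e_i\}$, so $d(\tfrac12 N_t(J_t)) \mequal (\dot h_t - (\divv Z)(X_t)h_t)\,\divv\alpha_{T-t}(X_t)\,dt$. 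On the other hand, from the drift computed in (i) together with the finite variation of $h_t$, $d(\divv\alpha_{T-t}(X_t)\,h_t) \mequal \divv\alpha_{T-t}(X_t)\,(\dot h_t - (\divv Z)(X_t)h_t)\,dt$. The two drifts coincide, so \eqref{LocMartInt1} is a local martingale, with value $\divv\alpha_T\,h_0$ at $t=0$ since $J_0 = 0$.

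The step I expect to be the main obstacle is the bookkeeping behind those two drift identities: keeping track of which objects lie in $T_xM$ and which in $T_x^{*}M$, exploiting the transpose relation $\Theta_t = Q_t^{\tra}$ so that the cross terms telescope to $\divv\alpha_{T-t}$, and accounting for the factor $2$ in $dB^i_t\,dB^j_t = 2\delta_{ij}\,dt$, which is precisely what the prefactor $\tfrac12$ in \eqref{LocMartInt1} absorbs. One also uses implicitly that the stochastic integral defining $J_t$ is of It\^o type, with $\Theta_s^{-1}$ of finite variation in $s$, in line with the derivation of \eqref{eq:locformzeroform}.
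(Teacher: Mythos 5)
Your proposal is correct and follows essentially the same route as the paper: part (i) via the commutation rule and a Feynman--Kac/It\^o argument, and part (ii) by pairing the local martingale $Q_t\invpartr t\alpha^{\mstrut}_{T-t}=\alpha^{\mstrut}_{T-t}(\partr t\Theta_t\,\newdot)$ against the stochastic integral and using the covariation (with the factor $2$ from $dB^i_t\,dB^j_t=2\delta_{ij}\,dt$ absorbed by the $\tfrac12$) to reproduce $\divv\alpha^{\mstrut}_{T-t}$. The only difference is cosmetic: you match the drifts of the two terms directly, whereas the paper substitutes $\ell_t=\A_t^{-1}h_t$ and writes the same cancellation as the integration-by-parts identity $n_t\ell_t-\int_0^t n_s\,d\ell_s$.
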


\begin{proof}
  (i) Taking into account the commutation rule \eqref{eq:comrulediv}
  and the evolution equation \eqref{eq:Semigroupforms} of $\alpha_t$,
  we get
  \begin{equation}\label{Eq:SemigrouponFunctions}
    \begin{split}
      \partial_t \divv \alpha_t &= \divv \partial_t \alpha_t\\
      &= \divv (\square+\nabla_Z-\Ric_{-Z}^{ *}+\divv Z)\alpha_t\\
      &= (\Delta + Z + \divv Z) \divv \alpha_t.
    \end{split}
  \end{equation}
  The claim then follows from It\^{o}'s formula.

  (ii) To verify the second item, set
  \begin{equation}
    \A_t := \exp \left(\int_0^t (\divv Z)(X_s(x))\,ds\right)
  \end{equation}
  and define $\ell_t := \A^{-1}_t h_t$. Using the fact that
  $\alpha^{\mstrut}_{T-t}(\partr t \Theta_t)$ is a local martingale,
  indeed
  \begin{equation}
    d\big(\alpha^{\mstrut}_{T-t}(\partr t \Theta_t)\big)
    =\sum_{i=1}^{n} (\nabla_{U_te_i}\alpha^{\mstrut}_{T-t})(\partr t \Theta_t)\,dB^i_t
  \end{equation}
  we obtain
  \begin{align*}
    (\divv \alpha_{T-t}^\mstrut)&(X_t(x))\A_t \dot{\ell_t}\,dt\\
                                &=\sum_{i=1}^n (\nabla_{U_te_i}^\mstrut \alpha^{\mstrut}_{T-t})\left( U_te_i\right) \A_t \dot{\ell_t}\,dt\\
                                &=\sum_{i=1}^n \left(\invpartr t \nabla_{U_te_i}^\mstrut \alpha^{\mstrut}_{T-t}\right)\left( U_0e_i\right)\A_t \dot{\ell_t}\,dt\\
                                &=\sum_{i=1}^n \left(\nabla_{U_te_i}^\mstrut \alpha^{\mstrut}_{T-t}\right)\big(\partr t\Theta_t\Theta_t^{-1} U_0 e_i \big)\,\A_t \dot{\ell_t}\,dt\\
                                &=\frac12 \Big\langle\sum_{i=1}^n \left(\nabla_{U_te_i}^\mstrut\alpha^{\mstrut}_{T-t}\right)( \partr t \Theta_t)\,dB^i_t\,, \A_t \dot{\ell_t}\Theta_t^{-1} dB_t\Big\rangle\\
                                &\mequal\frac12 d\left(\alpha_{T-t}^\mstrut\Big( \partr t \Theta_t \int_0^t \A_s \dot{\ell_s}\Theta_s^{-1}\,dB_s\Big)\right)
  \end{align*}
  where $\mequal$ denotes equality modulo the differential of a local
  martingale.  By part (i)
  \begin{equation}\label{LocMartDiv}
    n_t:=(\divv\alpha^\mstrut_{T-t})(X_t(x)) \A_t 
  \end{equation}
  is a local martingale and therefore so is
  \begin{equation}\label{LocMartInt}
    n_t\ell_t-\int_0^t n_s\,d\ell_s.
  \end{equation}
  Since
  \begin{equation}
    \A_t \dot{\ell}_t = \dot{h}_t -(\divv Z)(X_t(x))\,h_t
  \end{equation}
  the result follows by substitution.
\end{proof}

\begin{remark} a) Let $D^n$ be an exhausting sequence of $M$ by
  relatively compact open domains.  Following the discussion of
  \cite[Appendix~B]{DriverThal_2002} and \cite[Section
  III.1]{Engel-Nagel:2000} it is standard to show that there is a
  strongly continuous semigroup $P_t^n$ on compactly supported
  $1$-forms~$\alpha$ on $D^n$ generated by
  $L:=\square+\nabla_Z-\Ric_{-Z}^{*}+\divv Z$ with Dirichlet boundary
  conditions. In probabilistic terms,
  $\alpha_t^n(x):=(P_t^n\alpha)(x)$ is easily identified as
$$\alpha_t^n(x)=\E\left[\1_{\{t<\tau^n(x)\}}\,\alpha(\partr t \Theta_t)\right]$$
where $\tau^n(x)$ is the first exit time of $X_t(x)$ from $D^n$, when
started at $x\in D^n$.  As $n\to\infty$, the semigroup $\alpha_t^n$
converges to
\begin{equation}\label{Eq:ReprAlpha}
  \alpha_t(x)=\E\left[\1_{\{t<\zeta(x)\}}\,\alpha(\partr t \Theta_t)\right].
\end{equation}
In particular, $\alpha_t$ solves equation \eqref{eq:Semigroupforms} on
$M$.

b) Formula \eqref{Eq:ReprAlpha} shows that $\alpha_t$ is bounded in
case $\alpha$ is bounded.  Choosing the process~$h$ in
\eqref{LocMartInt1} in such a way that $h_0=1$ but $h_t=0$ for
$t\geq \tau\wedge T$ where $\tau$ is the first exit time of $X_t(x)$
of some relatively compact neighbourhood of $x$, we arrive at the
formula
\begin{equation}\label{Eq:StochReprAlpha}
  (\divv \alpha^{\mstrut}_{T})(x)=-\frac12 \E\left[\1_{\{T<\zeta(x)\}} \alpha\left( \partr T \Theta_T \int_0^T \left(\dot{h}_s -(\divv Z)(X_s(x))\,h_s\right) \Theta_s^{-1} \invpartr s dB_s\right)\right].
\end{equation}
Note that the local formula \eqref{Eq:StochReprAlpha} doesn't require
assumptions, either on the geometry of $M$ or on the drift vector
field $Z$. Indeed, with an appropriate choice of $h$ it is always
possible to make \eqref{LocMartInt1} a true martingale.
\end{remark}

\begin{lemma}\label{lem:truemart}
  Suppose $\Ric_Z$ is bounded below, that ${\Ric} + (\nabla_\bull Z)^\ast$, $\divv Z$ and $\divv \alpha$ are bounded with $h_t$ bounded and
  \begin{equation} {\left(\int_0^{T} |\dot{h}_s|^2
        ds\right)}^{{1}/{2}} \in L^{1+\epsilon}
  \end{equation}
  for some $\epsilon >0$. Then the local martingale
  \eqref{LocMartInt1} is a true martingale.
\end{lemma}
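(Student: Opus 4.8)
The plan is to show that the local martingale in \eqref{LocMartInt1} is dominated, on $[0,T]$, by an integrable random variable, so that it is of class (DL) and hence a true martingale. Write the local martingale as $M_t = I_t - \tfrac12\,\mathrm{II}_t$, where
$$I_t = (\divv\alpha^\mstrut_{T-t})(X_t(x))\,h_t,\qquad \mathrm{II}_t = \alpha^\mstrut_{T-t}\Bigl(\partr t\,\Theta_t\int_0^t\bigl(\dot h_s - (\divv Z)(X_s(x))\,h_s\bigr)\,\Theta_s^{-1}\invpartr s\,dB_s\Bigr).$$
The first term $I_t$ is immediately bounded: $\divv\alpha$ is bounded by hypothesis, and since $\alpha$ is bounded the representation \eqref{Eq:ReprAlpha} together with the operator bound on $\partr t\Theta_t$ (coming from the $Q_t$-estimate in the Remark after Corollary \ref{cor:martpropgen}, applied with $\SR = \Ric_{-Z}^\ast - \divv Z$, whose symmetric part is controlled because $\Ric + (\nabla_\bull Z)^\ast$ and $\divv Z$ are bounded) shows $\divv\alpha_{T-t}$ stays bounded on $[0,T]$; combined with boundedness of $h$, $\sup_{t\le T}|I_t| \in L^\infty$.

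For the second term I would estimate the three factors separately. First, $|\alpha_{T-t}|$ is bounded on $[0,T]$ by the same argument via \eqref{Eq:ReprAlpha} and the bound $\|\partr t\Theta_t\|_{\mathrm{op}} \le \exp(-\int_0^t\underline{\SR}(X_s(x))\,ds)$, which is uniformly bounded on $[0,T]$ once $\Ric + (\nabla_\bull Z)^\ast$ and $\divv Z$ are bounded (equivalently, $\Ric_{-Z}^\ast - \divv Z$ is bounded below, which is implied by $\Ric_Z$ bounded below together with the other boundedness hypotheses). Second, $\|\partr t\Theta_t\|_{\mathrm{op}}$ is itself bounded on $[0,T]$ by the same exponential estimate, and likewise one needs a bound on $\|\Theta_t^{-1}\|_{\mathrm{op}}$: this follows from the differential equation for $\Theta_t$, since $\frac{d}{dt}\Theta_t^{-1} = \Theta_t^{-1}(\Ric_{-Z}^\ast - \divv Z)_{\partr t}$ gives $\|\Theta_t^{-1}\|_{\mathrm{op}} \le \exp(\int_0^t\|(\Ric_{-Z}^\ast-\divv Z)_{\partr s}\|_{\mathrm{op}}\,ds)$, uniformly bounded on $[0,T]$ because $\Ric + (\nabla_\bull Z)^\ast$ and $\divv Z$ are bounded. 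So the whole prefactor $\alpha_{T-t}(\partr t\Theta_t\,\cdot)$ has operator norm bounded by a deterministic constant $C = C(T)$, and
$$\sup_{t\le T}|\mathrm{II}_t| \le C\,\sup_{t\le T}\Bigl|\int_0^t\bigl(\dot h_s - (\divv Z)(X_s(x))\,h_s\bigr)\,\Theta_s^{-1}\invpartr s\,dB_s\Bigr|.$$

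It remains to show the supremum of the stochastic integral on the right is integrable. By Burkholder–Davis–Gundy, $\E\bigl[\sup_{t\le T}|\int_0^t(\cdots)\,\Theta_s^{-1}\invpartr s\,dB_s|\bigr]$ is controlled by $\E\bigl[(\int_0^T\|\dot h_s - (\divv Z)(X_s(x))h_s\|^2\,\|\Theta_s^{-1}\|_{\mathrm{op}}^2\,ds)^{1/2}\bigr]$, using that $\invpartr s$ is an isometry and $dB^idB^j = 2\delta_{ij}\,dt$. Since $\|\Theta_s^{-1}\|_{\mathrm{op}}$ and $\divv Z$ and $h$ are all bounded, this is bounded by a constant times $\E\bigl[(\int_0^T|\dot h_s|^2\,ds)^{1/2}\bigr] + \text{const}$, which is finite — indeed it is bounded using only that $(\int_0^T|\dot h_s|^2\,ds)^{1/2} \in L^1$, and the hypothesis gives the stronger $L^{1+\epsilon}$ integrability. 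Hence $\sup_{t\le T}|M_t| \in L^1$, so $M_t$ is a uniformly integrable martingale on $[0,T]$.

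The main obstacle is the bookkeeping around the processes $\Theta_t$ and $\Theta_t^{-1}$: one must be careful that the hypotheses as stated ($\Ric_Z$ bounded below, $\Ric + (\nabla_\bull Z)^\ast$, $\divv Z$ bounded) really do yield \emph{two-sided} operator bounds on $\partr t\Theta_t$ and $\Theta_t^{-1}$ over the finite interval $[0,T]$ — the lower bound on $\Ric_Z$ controls growth of $\Theta_t$ in one direction, while the boundedness of $\Ric + (\nabla_\bull Z)^\ast$ and $\divv Z$ (hence of the full endomorphism $\Ric_{-Z}^\ast - \divv Z$, not just its symmetrized part) controls $\Theta_t^{-1}$. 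Once these deterministic bounds are in hand, the rest is a routine BDG estimate and the $L^{1+\epsilon}$ (indeed $L^1$) hypothesis on $(\int_0^T|\dot h_s|^2\,ds)^{1/2}$ closes the argument; the extra $\epsilon$ of integrability is presumably there to give room in case one prefers to bound $\sup_t|M_t|$ in $L^{1+\epsilon'}$ and invoke uniform integrability directly, or to absorb a Hölder step if $\divv\alpha_{T-t}$ or $\alpha_{T-t}$ were only known to be in some $L^p$ rather than $L^\infty$.
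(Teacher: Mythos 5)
Your overall strategy matches the paper's: deterministic two-sided bounds on $\Theta_t$ and $\Theta_t^{-1}$ from the boundedness of $\Ric+(\nabla_\bull Z)^\ast$ and $\divv Z$, boundedness of $\alpha_{T-t}$ and $\divv\alpha_{T-t}$, and then integrability of the stochastic integral from the hypothesis on $h$ (your BDG computation makes explicit what the paper leaves implicit, and you are right that $L^1$ of $(\int_0^T|\dot h_s|^2ds)^{1/2}$ already suffices for this step). However, there are two concrete weak points. First, you never use the hypothesis that $\Ric_Z$ is bounded below for its actual purpose: it guarantees non-explosion of $X_t(x)$ (the paper cites \cite[Corollary~2.1.2]{Wangbook}). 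Without $\zeta(x)=\infty$ the local martingale \eqref{LocMartInt1} is only defined for $t<T\wedge\zeta(x)$, the representation of $\alpha_t$ carries the indicator $\1_{\{t<\zeta(x)\}}$, and the question of being a ``true martingale on $[0,T]$'' does not even make sense; your parenthetical use of this hypothesis (as reinforcing a lower bound on $\Ric_{-Z}^\ast-\divv Z$ that already follows from the other assumptions) misses this.

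Second, your justification that $\divv\alpha_{T-t}$ is bounded is a non-sequitur: the representation \eqref{Eq:ReprAlpha} together with the operator bound on $\partr t\Theta_t$ controls $\alpha_t$ itself, but boundedness of a $1$-form does not bound its divergence, which involves a derivative. The paper closes this gap differently: the local formula \eqref{Eq:StochReprAlpha} (valid with a compactly supported choice of $h$, hence needing no global hypotheses) expresses $(\divv\alpha_t)(x)$ in terms of $\alpha$ alone and the controlled processes $\Theta,\Theta^{-1}$, showing that $\divv\alpha_t$ is a \emph{bounded} solution of the scalar heat equation \eqref{Eq:HeatEqu} with potential $\divv Z$; uniqueness of bounded solutions then yields the Feynman--Kac identity \eqref{Eq:Commutation_Formula1}, and the hypothesis that $\divv\alpha$ is bounded gives $|\divv\alpha_t|\le |\divv\alpha|_\infty\, e^{t\,|\divv Z|_\infty}$. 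With these two repairs your argument goes through; the rest (the deterministic Gronwall-type bounds on $\Theta_t$, $\Theta_t^{-1}$ and the BDG estimate) is sound.
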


\begin{proof}
  Since ${\Ric}_Z$ is bounded below, the process $X_t$ is non-explosive, by
  \cite[Corollary~2.1.2]{Wangbook}.  In this case we have
  $\alpha_t=\E\big[\alpha(\partr t \Theta_t)\big]$.  From
  equation~\eqref{Eq:SemigrouponFunctions} we see that
$$u(t,x):=(\divv \alpha_t)(x)$$
solves the heat equation
\begin{equation}\label{Eq:HeatEqu}
  \partial_t u= (\Delta + Z + \divv Z) u
\end{equation}
with initial condition $u(0,\newdot) = \divv \alpha$. By means of
equation~\eqref{Eq:StochReprAlpha}, combined with the bound on
$\divv Z$ and the other assumptions, we see that $\divv \alpha_t$ is a
bounded solution to \eqref{Eq:HeatEqu}, which implies
\begin{equation}\label{Eq:Commutation_Formula1}
  \divv \alpha_t = \E \left[ (\divv \alpha)(X_t) \exp\left( \int_0^t (\divv Z)(X_s)\,ds\right)\right]
\end{equation}
for all $t\geq 0$.  Note that our assumptions control the norms of
$\Theta_t$ and $\Theta_t^{-1}$.  Combined with the assumptions on $h$
this proves that \eqref{LocMartInt1} is indeed a true martingale.
\end{proof}

\begin{remark} Equation \eqref{Eq:Commutation_Formula1} shows that
  $\divv$ commutes with the semigroup $P_t^{(1)}\alpha:=\alpha_t$ on
  $1$-forms:
$$ \divv P_t^{(1)}\alpha = P_t^{\divv Z}(\divv\alpha)$$
where
$$P_t^{\rho}f:=\E \left[f(X_t) \exp\left( \int_0^t
    \rho(X_s)\,ds\right)\right]$$
denotes the Feynman-Kac semigroup on functions to $\Delta+Z$ with
scalar potential $\rho$.
\end{remark}

Using the identification of differential forms and vector fields via
the metric, we obtain the following result (which for compact $M$ with $Z=0$ corrects the sign in \cite[Theorem~5.10]{DriverThal_2002}):

\begin{thm}\label{thm:MainTheorem}
  Let $M$ be a Riemannian manifold and $Z$ a smooth vector field
  on~$M$.  Let $X=X(x)$ be a diffusion to $\Delta+Z$ on $M$, starting
  at $X_0(x)=x$, which is assumed to be non-explosive.  Let $T>0$ and
  $h$ be an adapted process with paths in $L^{1,2}([0,T];\R)$ such
  that $h_0=0$ and $h_T=1$, and such that~\eqref{LocMartInt1} is a
  true martingale. Then for all bounded smooth vector
  fields $V$ on~$M$,
  \begin{align*}
    \E\bigl[(\divv V)(X_T(x))\bigr]=\frac12\E\left[\Bigl\langle V(X_T(x)),\partr T \Theta_T \int_0^T \left(\dot h_t-(\divv Z)(X_t(x))h_t\right)\Theta_t^{-1} dB_t\Bigr\rangle\right]
  \end{align*}
  where $\Theta$ is the $\Aut(T_xM)$-valued process defined by the
  following pathwise differential equation:
  $$\frac{d}{dt}\Theta_t=-\Ric_{\partr t}\Theta_t-
  (\nabla_{\bull}Z)^{\ast}_{\partr t}\Theta_t+(\divv Z)\Theta_t$$
  with $\Theta_0=\id_{T_xM}$.
\end{thm}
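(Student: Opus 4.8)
The plan is to apply formula \eqref{Eq:StochReprAlpha} with a carefully chosen $1$-form $\alpha$ and then integrate the resulting identity against the heat kernel. Specifically, given a bounded smooth vector field $V$, set $\alpha := V^\flat$ so that $\divv\alpha = \divv V$. The previous subsection gives, under the stated true-martingale hypothesis, the stochastic representation
\begin{equation*}
  (\divv\alpha^{\mstrut}_{T})(x) = -\frac12\,\E\left[\alpha\left(\partr T \Theta_T \int_0^T \left(\dot h_s - (\divv Z)(X_s(x))\,h_s\right)\Theta_s^{-1}\,\invpartr s\,dB_s\right)\right]
\end{equation*}
when $h_0 = 1$ and $h$ eventually vanishes; but here we want $h_0 = 0$, $h_T = 1$, which forces us to work with the time-reversed roles of the endpoints. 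The key observation is that the local martingale \eqref{LocMartInt1}, evaluated at $t = T$ rather than at $t = 0$, has expectation
\begin{equation*}
  \E\bigl[(\divv\alpha^{\mstrut}_0)(X_T(x))\bigr] - \frac12\,\E\left[\alpha^{\mstrut}_0\Bigl(\partr T \Theta_T \int_0^T \left(\dot h_s - (\divv Z)(X_s(x))\,h_s\right)\Theta_s^{-1}\,\invpartr s\,dB_s\Bigr)\right]
\end{equation*}
equal to its value at $t = 0$, which is $(\divv\alpha^{\mstrut}_T)\,h_0 = 0$ since $h_0 = 0$. Taking $\alpha^{\mstrut}_0 = V^\flat$, so that $\divv\alpha^{\mstrut}_0 = \divv V$ and $\alpha^{\mstrut}_0(w) = \langle V, w\rangle$ for $w \in T_{X_T(x)}M$, rearranging this identity yields exactly the claimed formula.

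First I would verify that the identification of $1$-forms with vector fields via the metric is compatible with the operators appearing in \eqref{eq:Semigroupforms} and \eqref{Eq:Qtr}: under $\alpha \leftrightarrow \alpha^\sharp$, the operator $\square + \nabla_Z - \Ric_{-Z}^\ast + \divv Z$ on $1$-forms corresponds to the analogous operator on vector fields, and the pathwise equation for $\Theta_t$ in \eqref{Eq:Qtr} is, by definition of $\Ric_{-Z}$ and Notation~\ref{notatation:Ric}, the same as the equation
$$\frac{d}{dt}\Theta_t = -\Ric_{\partr t}\Theta_t - (\nabla_\bull Z)^\ast_{\partr t}\Theta_t + (\divv Z)\Theta_t$$
stated in the theorem, because $\Ric_{-Z}(X,Y) = \Ric(X,Y) + \langle\nabla_X Z, Y\rangle$ gives $\Ric_{-Z}^\ast = \Ric + (\nabla_\bull Z)^\ast$ as endomorphisms (using the symmetry of $\Ric$). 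Next I would confirm that the pairing $\alpha^{\mstrut}_0(\partr T \Theta_T \int_0^T \cdots)$ equals $\langle V(X_T(x)), \partr T \Theta_T \int_0^T \cdots\rangle$: this is immediate once one notes that $\alpha^{\mstrut}_0 = V^\flat$ evaluated at the point $X_T(x)$ acts on tangent vectors there by $w \mapsto \langle V(X_T(x)), w\rangle$, and $\partr T$ maps $T_{X_0}M = T_x M$ to $T_{X_T(x)}M$, so the argument of $\alpha^{\mstrut}_0$ is indeed a tangent vector at $X_T(x)$.

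The main obstacle — and it is more a matter of bookkeeping than a genuine difficulty — is tracking the direction of the stochastic parallel transport and the $\invpartr s$ factors inside the It\^o integral, and making sure the local martingale \eqref{LocMartInt1} is being evaluated at the correct endpoint. The proof of the Proposition preceding Lemma~\ref{lem:truemart} establishes that \eqref{LocMartInt1} is a local martingale on $[0, T\wedge\zeta(x))$ starting at $\divv\alpha^{\mstrut}_T\,h_0$; under the non-explosivity assumption and the true-martingale hypothesis, one may take expectations at $t = T$ to equate this starting value with the expectation of \eqref{LocMartInt1} at time $T$. Since $\alpha^{\mstrut}_{T-t}|_{t=T} = \alpha^{\mstrut}_0 = V^\flat$ and $\divv\alpha^{\mstrut}_{T-t}|_{t=T} = \divv V$, and since $h_0 = 0$ makes the left-hand side vanish, the identity $0 = \E[(\divv V)(X_T(x))\,h_T] - \frac12\E[\langle V(X_T(x)), \partr T\Theta_T\int_0^T(\dot h_s - (\divv Z)(X_s(x))h_s)\Theta_s^{-1}\,dB_s\rangle]$ follows, and $h_T = 1$ gives the theorem. (I have suppressed the $\invpartr s$ inside the integral in this last display to match the theorem's statement; one checks that the $\Theta_s^{-1}$ in the theorem already incorporates the transport back to $T_x M$, consistent with $B$ being the $T_x M$-valued martingale part of the anti-development.)
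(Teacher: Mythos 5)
Your proposal is correct and follows essentially the paper's own route: take $\alpha=V^\flat$ as the initial condition of \eqref{eq:Semigroupforms}, evaluate the local martingale \eqref{LocMartInt1} at $t=T$ (where $\alpha^{\mstrut}_{T-t}$ reduces to $V^\flat$ and $\divv\alpha^{\mstrut}_{T-t}$ to $\divv V$), and equate expectations with the starting value $\divv\alpha^{\mstrut}_T\,h_0=0$ using non-explosivity and the true-martingale hypothesis. Your side checks — that $\Ric_{-Z}^{*}-\divv Z$ unpacks to the endomorphism in the theorem's equation for $\Theta$, and that the pairing $\alpha^{\mstrut}_0(\partr T\Theta_T\cdots)$ is the stated inner product — are exactly the content of the phrase ``using the identification of differential forms and vector fields via the metric'' by which the paper passes from the Proposition to the Theorem.
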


\begin{cor}\label{cor:formula_cor}
  Suppose $f$ is a bounded smooth function and that $V$ is a bounded smooth vector field with $\divv V$ bounded. Then, under the assumptions of Theorem \ref{thm:MainTheorem}, by using the relation $\divv(fV)=Vf+f\divv V$, we get
  \begin{align}
    &P_T\big(V(f)\big)(x)\\
    =&-\E\big[f(X_T(x))\,(\divv V)(X_T(x))\big] \\
                        &+\frac12\E\left[f(X_T(x))\,\Big\langle V(X_T(x)), \partr T \Theta_T \int_0^T \left(\dot h_t-(\divv Z)(X_t(x))h_t\right)\,\Theta_t^{-1}dB_t\Big\rangle\right]\notag
  \end{align}
  where the right-hand side does not contain any derivatives of $f$.
\end{cor}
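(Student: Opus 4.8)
The plan is to apply Theorem~\ref{thm:MainTheorem} not to $V$ itself but to the product vector field $fV$, and then to undo the product by means of the Leibniz rule $\divv(fV)=V(f)+f\,\divv V$ recorded in the statement. Since $X$ is non-explosive, the semigroup acts as $P_T g(x)=\E[g(X_T(x))]$ on bounded measurable $g$, which is what converts the resulting expectations into the asserted $P_T(V(f))(x)$.

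First I would check that $fV$ is admissible: because $f$ and $V$ are bounded and smooth, $fV$ is a bounded smooth vector field, and I would argue that the true-martingale hypothesis on \eqref{LocMartInt1} is inherited when $\alpha$ is replaced by $(fV)^\flat$---this is where boundedness of $\divv V$ (hence control on the part $f\,\divv V$ of $\divv(fV)$) together with the standing assumptions of Theorem~\ref{thm:MainTheorem} enters. Granting this, Theorem~\ref{thm:MainTheorem} applied to $fV$ gives
\begin{equation*}
  \E\bigl[(\divv(fV))(X_T(x))\bigr]=\frac12\,\E\left[\Bigl\langle f(X_T(x))\,V(X_T(x)),\ \partr T\,\Theta_T\int_0^T\bigl(\dot h_t-(\divv Z)(X_t(x))h_t\bigr)\Theta_t^{-1}\,dB_t\Bigr\rangle\right].
\end{equation*}
Pulling the scalar $f(X_T(x))$ out of the inner product turns the right-hand side into exactly the second term of the claimed identity.

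Next I would expand the left-hand side by the Leibniz rule,
\begin{equation*}
  \E\bigl[(\divv(fV))(X_T(x))\bigr]=\E\bigl[(Vf)(X_T(x))\bigr]+\E\bigl[f(X_T(x))\,(\divv V)(X_T(x))\bigr],
\end{equation*}
and use non-explosivity to identify $\E[(Vf)(X_T(x))]=P_T(V(f))(x)$. Substituting this and rearranging the two displays yields the stated formula. That the right-hand side contains no derivatives of $f$ is then immediate, since it is assembled only from $\divv V$, $V$, $Z$, $\Theta$, $h$ and $B$. The one step that genuinely needs care---and hence the main obstacle---is the verification that $fV$ falls within the scope of Theorem~\ref{thm:MainTheorem}, i.e.\ that the passage from $\alpha$ to $(fV)^\flat$ preserves the true-martingale property of \eqref{LocMartInt1}; the remaining manipulations are routine bookkeeping.
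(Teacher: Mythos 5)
Your proposal is correct and is essentially the paper's own (implicit) argument: the corollary is proved exactly by applying Theorem \ref{thm:MainTheorem} to the vector field $fV$, invoking the Leibniz rule $\divv(fV)=Vf+f\divv V$, pulling the scalar $f(X_T(x))$ out of the inner product, and using non-explosivity to read the remaining expectation as $P_T(V(f))(x)$. Your remark that the real content is checking that $(fV)^\flat$ still satisfies the true-martingale hypothesis of \eqref{LocMartInt1} is exactly why the corollary adds the boundedness assumptions on $f$ and $\divv V$.
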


\begin{cor}
  Under the assumptions of Theorem \ref{thm:MainTheorem} we have
  \begin{equation}
    \big(\nabla \log p_T(x,\newdot)\big)_y = -\frac12\E\left[{\partr T} \Theta_T \int_0^T \left(\dot h_t-(\divv Z)(X_t(x))h_t\right)\Theta_t^{-1} dB_t \big\vert \, X_T(x) = y \right]
  \end{equation}
  with $\Theta$ given as above.
\end{cor}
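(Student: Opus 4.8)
The plan is to derive the stated formula from Corollary~\ref{cor:formula_cor} by integrating by parts against the heat kernel and then disintegrating the resulting expectations over the law of the endpoint $X_T(x)$.

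Fix $x\in M$ and take $f\in C_c^\infty(M)$; let $V$ be a bounded smooth vector field with bounded divergence. From the defining relation $(P_Tg)(x)=\int_M g(y)\,p_T(x,y)\,\vol(dy)$ one has $P_T(V(f))(x)=\int_M (Vf)(y)\,p_T(x,y)\,\vol(dy)$, and since $f$ has compact support the divergence theorem yields
$$P_T(V(f))(x)=-\int_M f(y)\,\big\langle V(y),(\nabla p_T(x,\newdot))_y\big\rangle\,\vol(dy)-\int_M f(y)\,p_T(x,y)\,(\divv V)(y)\,\vol(dy).$$
On the other hand, Corollary~\ref{cor:formula_cor} gives $P_T(V(f))(x)=-\E[f(X_T(x))\,(\divv V)(X_T(x))]+\tfrac12\E[f(X_T(x))\,\langle V(X_T(x)),G\rangle]$, where
$$G:=\partr T \Theta_T \int_0^T \big(\dot h_t-(\divv Z)(X_t(x))\,h_t\big)\,\Theta_t^{-1}\,dB_t$$
is the $T_{X_T(x)}M$-valued random vector of the statement; under the running hypotheses the martingale condition on \eqref{LocMartInt1} together with the estimates behind Lemma~\ref{lem:truemart} (controlling $\Theta_T$ and $\Theta_t^{-1}$) make $G$ integrable.

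Next I would disintegrate over the law $p_T(x,\newdot)\,\vol$ of $X_T(x)$: for any integrable $\Phi$, $\E[f(X_T(x))\,\Phi]=\int_M f(y)\,\E[\Phi\mid X_T(x)=y]\,p_T(x,y)\,\vol(dy)$, and the same with $\Phi=\langle V(X_T(x)),G\rangle$, after observing that on $\{X_T(x)=y\}$ the vector $G$ lies in the fixed space $T_yM$, so that $\E[G\mid X_T(x)=y]\in T_yM$ is well defined. Substituting this into Corollary~\ref{cor:formula_cor} and comparing with the divergence-theorem identity, the two $\divv V$ contributions cancel and one is left with
$$\int_M f(y)\,\Big\langle V(y),\,(\nabla p_T(x,\newdot))_y+\tfrac12\,p_T(x,y)\,\E\big[G\mid X_T(x)=y\big]\Big\rangle\,\vol(dy)=0$$
for every $f\in C_c^\infty(M)$ and every bounded smooth $V$ with bounded divergence. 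Letting $f$ localise and letting $V$ range over vector fields that span each tangent space in local coordinates, we conclude $(\nabla p_T(x,\newdot))_y=-\tfrac12\,p_T(x,y)\,\E[G\mid X_T(x)=y]$ for $\vol$-almost every $y$; since the heat kernel is smooth and strictly positive on the connected manifold $M$, division by $p_T(x,y)$ gives $(\nabla\log p_T(x,\newdot))_y=-\tfrac12\,\E[G\mid X_T(x)=y]$, which is the claimed formula.

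The main obstacle is the disintegration step: one must set up the regular conditional expectation of the vector-valued random variable $G$ given the endpoint, confirm its integrability (where the true-martingale hypothesis and the bounds of Lemma~\ref{lem:truemart} re-enter), and ensure that $y\mapsto\E[G\mid X_T(x)=y]$ is regular enough both for the integral manipulations above and for upgrading the almost-everywhere identity to a genuine pointwise one — here the existence of the smooth, strictly positive heat kernel is precisely what makes the last point routine. Everything else reduces to the divergence theorem together with the two available expressions for $P_T(V(f))(x)$, one from the kernel representation and one from Corollary~\ref{cor:formula_cor}.
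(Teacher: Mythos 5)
Your argument is correct and is essentially the paper's own proof: both compare the probabilistic formula with the integration-by-parts identity against the heat kernel and then disintegrate over the law of $X_T(x)$. The only cosmetic difference is that you route through Corollary \ref{cor:formula_cor} with a compactly supported test function $f$ and a bounded $V$, whereas the paper applies Theorem \ref{thm:MainTheorem} directly to compactly supported vector fields $V$, so the $\divv V$ terms never need to cancel.
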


\begin{proof}
  By Theorem \ref{thm:MainTheorem}, for all smooth, compactly supported vector fields $V$ we have
  \begin{equation}
    \begin{split}
      &P_T(\divv V)(x) =\\
      &\frac12\int_M \Big\langle V ,\, \E\left[{\partr T} \Theta_T
        \int_0^T \left(\dot
          h_t-(\divv Z)(X_t(x))h_t \right)\Theta_t^{-1} dB_t \big\vert \, X_T(x) = y \right]
      \Big\rangle p_T(x,y) \,dy
    \end{split}
  \end{equation}
  but on the other hand
  \begin{equation}
    \begin{split}
      P_T(\divv V)(x) &= \int_M (\divv V)(y) \,p_T(x,y)\,dy\\
      &=  -\int_M \left(dp_T(x,\newdot)\right)_y\,V(y) \,dy\\
      &= -\int_M \left(d \log p_T(x,\newdot)\right)_y\,V(y)\, p_T(x,y)
      \,dy
    \end{split}
  \end{equation}
  so the result follows.
\end{proof}

\subsection{Shift-Harnack Inequalities}\label{subsec:one_SH}

Suppose $\Ric_Z$ is bounded below, that ${\Ric} + (\nabla_\bull Z)^\ast$ and $\divv Z$ are bounded and that the following formula holds, for all $t >0$, all $f \in C_b^1(M)$ and all bounded vector fields $V$ with $\divv V$ bounded (see Corollary~\ref{cor:formula_cor}):
\begin{equation}
\begin{split}
P_t(V(f))(x)&=-\E\big[f(X_t(x))\,(\divv V)(X_t(x))\big] \\
&\quad +\frac{1}{2}\E\left[f(X_t(x))\,\Big\langle V(X_t(x)), \partr t \Theta_t \int_0^t \bigg[\frac{1}{t}-(\divv Z)(X_r(x))\frac{r}{t}\bigg]\,\Theta_r^{-1}dB_r\Big\rangle\right]\notag.
\end{split}
\end{equation}
Fix $T>0$. Then, by Jensen's inquality (see \cite[Lemma~6.45]{Stroock2000}), there exist constants $c,C_1(T)>0$ such that
\begin{equation}\label{eq:est2}
|P_t(V(f))| \leq \delta \left(P_t (f\log f) - P_tf\log P_tf\right) + {\underbrace{\left(|\divv V|_\infty + \delta c + \frac{C_1(T)}{\delta t}|V|^2_\infty\right)}_{=:\text{ }\alpha_1(\delta,t,V)}} P_tf
\end{equation}
for all $\delta >0$, $t \in (0,T]$ and positive $f \in C^1_b(M)$. Alternatively, by the Cauchy-Schwarz inequality, there exists $C_2(T)>0$ such that
\begin{equation}\label{eq:est1}
|P_t(V(f))|^2 \leq  {\underbrace{\left(|\divv V|_\infty+ \frac{C_2(T)}{\sqrt{t}} |V|_\infty\right)}_{=:\text{ }\alpha_2(t,V)}}^2 P_t f^2
\end{equation}
for all $t \in (0,T]$ and $f \in C^1_b(M)$. These estimates can be used to derive \textit{shift-Harnack inequalities}, as shown by F.-Y. Wang for the case of a Markov operator on a Banach space (see \cite[Proposition~2.3]{wang}). In particular, suppose $\lbrace F_s\colon s \in [0,1] \rbrace$ is a $C^1$ family of diffeomorphisms of $M$ with $F_0 = \id_M$. For each $s \in [0,1]$ define a vector field $V_s$ on $M$ by
\begin{equation}
V_s := (DF_s)^{-1} \dot{F}_s
\end{equation}
and assume $V_s$ and $\divv V_s$ are uniformly bounded. Note $\frac{d}{ds}(f\circ F_s) = \nabla_{V_s} (f\circ F_s)$. Fixing $p \geq 1$ and setting $\beta(s) = 1+(p-1)s$, as in the first part of \cite[Proposition~2.3]{wang}, we deduce from inequality \eqref{eq:est2} that
\begin{equation}
\frac{d}{ds} \log \left( P_t(f^{\beta(s)}\circ F_s)\right)^{p/\beta(s)} \geq - \frac{p}{\beta(s)} \alpha_1\left(\frac{\beta'(s)}{\beta(s)},t,V_s\right)
\end{equation}
for all $s \in [0,1]$, which when integrated gives the shift-Harnack inequality
\begin{equation}
(P_t f)^p \leq \left(P_t(f^p\circ F_1)\right)\exp\left(\int_0^1 \frac{p}{\beta(s)} \alpha_1\left(\frac{\beta'(s)}{\beta(s)},t,V_s\right)ds\right)
\end{equation}
for each $t\in [0,T]$ and positive $f \in C^1_b(M)$. Alternatively, from inequality \eqref{eq:est1} and following the calculation in the second part of \cite[Proposition~2.3]{wang}, we deduce
\begin{equation}
P_t f \leq P_t (f\circ F_1) + \left( \int_0^1 \alpha_2(t,V_s) ds\right)^{{1}/{2}} \sqrt{P_t f^2}
\end{equation}
for each $t\in [0,T]$ and positive $f \in C^1_b(M)$. The shift $F_1$ could be given by the exponential of a well-behaved vector field; the shifts considered in \cite{wang} are of the form $x \mapsto x+v$, for some $v$ belonging to the Banach space.

\section{Extrinsic Formulae}\label{sec:two}

Suppose now that $M$ is simply a smooth manifold of dimension $n$. Suppose
$A_0$ is a smooth vector field and
$$A\colon M \times \R^m \rightarrow TM,\quad (x,e)\mapsto
A(x)e,$$
a smooth bundle map over $M$. This means $A(\newdot)e$ is a vector
field on $M$ for each $e\in\R^m$, and $A(x)\colon\R^m\to T_xM$ is
linear for each $x\in M$

For an $\R^m$-valued Brownian motion $B_t$, sped up by $2$ so
that $d[B,B]_t = 2\,\id_{\R^m}dt$, defined on a filtered
probability space
$(\Omega,\SF,\P;(\SF_t)_{t \in \R_+})$, satisfying the
usual completeness conditions, consider the Stratonovich stochastic
differential equation
\begin{equation}\label{eqn:SDE}
  d X_t = A_0(X_t)\,dt+ A(X_t)\circ dB_t.
\end{equation}
Given an orthonormal basis $\lbrace e_i \rbrace_{i=1}^m$ of
$\R^m$ set $A_i(\newdot) := A(\newdot) e_i$ and
$B^i_t := \langle B_t,e_i\rangle$. Then the previous equation can be
equivalently written
\begin{equation}
  d X_t = A_0(X_t)\,dt+\sum_{i=1}^m A_i(X_t)\circ dB^i_t.
\end{equation}
There is a partial flow $X_t(\newdot)$, $\zeta(\newdot)$ associated to
\eqref{eqn:SDE} (see \cite{kunita} for details) such that for each
$x \in M$ the process $X_t(x)$, $0\leq t<\zeta(x)$ is the maximal
strong solution to \eqref{eqn:SDE} with starting point $X_0(x) = x$,
defined up to the explosion time $\zeta(x)$; moreover using the
notation $X_t(x,\omega) = X_t(x)(\omega)$ and
$\zeta(x,\omega) = \zeta(x)(\omega)$, if
\begin{equation}
  M_t(\omega) = \lbrace x \in M: t< \zeta(x,\omega)\rbrace
\end{equation}
then there exists $\Omega_0 \subset \Omega$ of full measure such that
for all $\omega \in \Omega_0$:
\begin{itemize}
\item[i)] $M_t(\omega)$ is open in $M$ for each $t\geq 0$,
  i.e. $\zeta(\newdot,\omega)$ is lower semicontinuous on $M$;
\item[ii)] $X_t(\newdot,\omega):M_t(\omega)\rightarrow M$ is a
  diffeomorphism onto an open subset of $M$;
\item[iii)] The map $s\mapsto X_s(\newdot,\omega)$ is continuous from
  $[0,t]$ into $C^\infty(M_t(\omega),M)$ with its $C^\infty$-topology,
  for each $t>0$.
\end{itemize}
The solution processes $X = X(x)$ to \eqref{eqn:SDE} are diffusions on
$M$ with generator
\begin{equation}\label{eqn:generator}
  \SL:={A_0} +\sum_{i=1}^m A_i^2.
  % L:=\L_{A_0} + \sum_{i=1}^m \L_{A_i} \L_{A_i}
\end{equation}
% where $\L$ denotes Lie differentiation.
We will assume that the equation is non-degenerate, which is to say
that $A(x):\R^m \rightarrow T_xM$ is surjective for all
$x \in M$. Then $A$ induces a Riemannian metric on $M$, the quotient
metric, with respect to which
\begin{equation}
  A(x)^\ast = (A(x)\vert_{\ker A(x)^\perp})^{-1}
\end{equation}
and whose inner product $\langle \newdot,\newdot \rangle$ on a tangent
space $T_xM$ is given by
\begin{equation}
  \langle v,u\rangle = \langle A(x)^\ast v,A(x)^\ast u\rangle_{\R^m}.
\end{equation}

\subsection{A formula for the differential}

Denote by
\begin{equation}
  P_tf(x):=\E \left[ \1_{\lbrace t < \zeta(x)\rbrace} f(X_t(x))\right]
\end{equation}
the minimal semigroup associated to equation \eqref{eqn:SDE}, acting
on bounded measurable functions $f$. In terms of any linear connection
$\tilde{\nabla}$ on $TM$ with adjoint $\tilde{\nabla}'$ (see \eqref{eq:adjointdefn} below), a solution $TX_{t}(x)$ to the derivative
equation
\begin{equation}
  d^{\tilde{\nabla}'} TX_{t}(x) = \tilde{\nabla}_{TX_{t}(x)}A_0\,dt + \sum_{i=1}^m \tilde{\nabla}_{TX_{t}(x)} A_i \circ dB^i_t
\end{equation}
with $TX_{0}(x) = \id_{T_{x}M}$ is the derivative (in probability) at
$x$ of the solution flow to \eqref{eqn:SDE}. Our objective will be to
find a formula for $P_T (V(f))$ in terms of $TX_t$. Before doing
so, let us briefly derive the corresponding formula for
$(dP_T)(v)$. As in Subsection \ref{ss:formdif}, let $\ell_t$ be an
adapted process with paths in $L^{1,2}([0,T];T_{x}M)$. By It\^{o}'s
formula and the Weitzenb\"{o}ck formula (see \cite[Theorem~2.4.2]{ELJL3}) it follows, according to the procedure of Subsection \ref{ss:formdif}, that
\begin{equation}\label{eq:locmartingone}
  (dP_{T-t}f) (TX_t(x) {\ell}_t) -\frac12(P_{T-t}f)(X_t(x))\int_0^t \langle TX_s(x) \dot{\ell}_s ,A(X_s(x))\,dB_s\rangle
\end{equation}
is a local martingale, starting at $(dP_Tf)(\ell_0)$. Choosing
$\ell_t$ so that \eqref{eq:locmartingone} is a true martingale with
$\ell_0 =v$ and $\ell_T =0$, we obtain the formula
\begin{equation}\label{eq:locformoneform}
  (dP_Tf)(v) = - \frac12\E\left[\1_{\lbrace T < \zeta(x)\rbrace}f(X_T(x)) \int_0^{T} \langle TX_s(x) \dot{\ell}_s,A(X_s(x))\,dB_s\rangle \right].
\end{equation}
This formula is well-known; it is the one given by
\cite[Theorem~2.4]{Thalmaier97}. Formula \eqref{eq:locformzeroform}
can be obtained from it by filtering. Furthermore, it as always
possible to choose such $\ell_t$, as in Subsection
\ref{ss:formdif}. Now denote by $p_t(x,y)$ the smooth heat kernel
associated to \eqref{eqn:SDE} such that
\begin{equation}
  P_tf(x) = \int_M f(y) p_t(x,y) \,\vol(dy)
\end{equation}
where $\vol(dy)$ denotes integration with respect to the induced
Riemannian volume measure. Since formula \eqref{eq:locformoneform}
holds for all smooth functions $f$ of compact support, we deduce from
it the Bismut formula
\begin{equation}
  (d \log p_T(\newdot,y))_x(v) 
= -\frac12\E\left[ \int_0^{\tau \wedge T} \langle TX_s(x) \dot{\ell}_s,A(X_s(x))\,dB_s\rangle\rangle 
\big\vert \, X_T(x) = y \right],
\end{equation}
the original version of which was given in \cite{bismut} for compact
manifolds. The version stated here is
\cite[Corollary~2.5]{Thalmaier97}, the non-local version having been
earlier given in \cite{ElworthyLi}.

\subsection{Induced linear connections}

There are a number of linear connections naturally associated to the
map $A$. Firstly, there is the Levi-Civita connection $\nabla$ for the
induced metric. Secondly, there is the \textit{Le Jan-Watanabe
  connection}, which is given by the push forward under $A$ of the flat
connection on $\R^m$. Its covariant derivative
$\breve{\nabla}$ is defined by
\begin{equation}\label{eq:covder}
  \breve{\nabla}_v U = A(x)d\left(A(\newdot)^\ast U(\newdot)\right)_x (v)
\end{equation}
for a vector field $U$ and $v \in T_xM$. Like the Levi-Civita
connection, it is adapted to the induced metric. In fact, all metric
connections on $TM$ arise in this way. In addition to the properties
of $\breve{\nabla}$ summarized below, further details of it can be found in
\cite{ELJL3,ELJL1,ELJL2}. It has the property that if
$e \in \ker A(x)^\perp$ then $\breve{\nabla}_v A_e = 0$ for all
$v \in T_xM$, where by $A_e$ we mean the section $x \mapsto A(x)e$. It
therefore satisfies the Le Jan-Watanabe property
\begin{equation}
  \sum_{i=1}^m \breve{\nabla}_{A_i}A_i = 0.
\end{equation}
To any linear connection $\tilde{\nabla}$ on $TM$ one can
associate an adjoint connection $\tilde{\nabla}'$ by
\begin{equation}\label{eq:adjointdefn}
  \tilde{\nabla}'_vU=\tilde{\nabla}_v U - \tilde{T}(v,U)
\end{equation}
for $v$ a vector and $U$ a smooth vector field, where $\tilde{T}$
denotes the torsion tensor of $\tilde{\nabla}$. The adjoint of the Le
Jan-Watanabe connection will be denoted by $\hat{\nabla}$. It therefore
satisfies
\begin{equation}
  \hat{\nabla}_vU=\breve{\nabla}_v U - \breve{T}(v,U)
\end{equation}
or equivalently $\breve{\nabla}_vU=\hat{\nabla}_v U - \hat{T}(v,U)$,
where $\breve{T}$ and $\hat{T}$ denote the torsion tensors of
$\breve{\nabla}$ and $\hat{\nabla}$, respectively; these antisymmetric
tensors satisfy $\breve{T} = -\hat{T}$. By
\cite[Proposition~2.2.3]{ELJL3} the torsion can be written in terms of
$A$ by
\begin{equation}\label{eqn:torinX}
  \breve{T}(v,u)_x = A(x) (dA^\ast)_x(v,u)
\end{equation}
where $dA^\ast$ denotes the exterior derivative of the
$\R^m$-valued $1$-form $A^\ast:TM
\rightarrow\R^m$.
The adjoint connection can therefore be written in terms of $A$ by
\begin{equation}\label{eqn:adjinX}
  \hat{\nabla}_v U 
  = A(x)\left(d\left(A^\ast(\newdot) U(\newdot)\right)_x (v) - (dA^\ast)_x(v,U)\right).
\end{equation}
Besides torsion, we will also encounter several expressions involving
curvature, including
\begin{equation}\label{eqn:Ric}
  \begin{split}
    \breve{{\Ric}} := \sum_{i=1}^m \breve{R}(\newdot,A_i)A_i
  \end{split}
\end{equation}
where $\breve{R}$ denotes the curvature tensor of $\breve{\nabla}$. In
particular, \cite[Lemma~2.4.3]{ELJL3} states for a smooth $1$-form
$\phi$ that
\begin{equation}\label{eq:usingRic}
  \sum_{i=1}^m \L_{A_i}\L_{A_i}\phi = \trace \hat{\nabla}^2 \phi - \phi(\breve{\Ric})
\end{equation}
where $\L$ denotes Lie differentiation.

\subsection{Induced differential operators}

With respect to the metric induced by $A$, we set $\delta :=
d^\ast$. For a $1$-form $\phi$, the codifferential $\delta$ satisfies
\begin{equation}\label{eqn:deltaphi}
  \delta \phi = -\sum_{i=1}^m(\nabla_{A_i} \phi)( A_i)
\end{equation}
but this relation does not hold with $\nabla$ replaced by
$\hat{\nabla}$. Nonetheless, for the divergence of a smooth vector
field $U$ we do have
\begin{equation}\label{eqn:divU}
  \divv U = \sum_{i=1}^m\langle \nabla_{A_i} U,A_i \rangle = \sum_{i=1}^m\langle \hat{\nabla}_{A_i} U,A_i \rangle = \trace \hat{\nabla} U
\end{equation}
by the adaptedness of $\breve{\nabla}$.

\begin{lemma}\label{lem:newcom}
  For any smooth vector field $U$, $1$-form $\phi$ and linear
  connection $\tilde{\nabla}$ with adjoint
  $ \tilde{\nabla}'_\bull U = \tilde{\nabla}_\bull U -
  \tilde{T}(\nbull,U)$ we have
  \begin{equation}\label{eqn:divcommute}
    (U+\divv U)\delta \phi = -\delta \left(\tilde{\nabla}^\ast_U+  (\tilde{\nabla}' U)^{\trans \ast}\right)\phi.
  \end{equation}
\end{lemma}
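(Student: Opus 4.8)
The plan is to prove \eqref{eqn:divcommute} by duality. Both sides are functions on $M$ obtained by applying local differential operators to $\phi$, so it suffices to verify the identity after pairing against an arbitrary compactly supported $g\in C^\infty(M)$ and integrating against the Riemannian volume; by locality one may also assume $\phi$ compactly supported. The reason to argue dually rather than directly is that $\delta=d^\ast$ is built from the Levi-Civita connection whereas $\tilde\nabla$ is an arbitrary connection, with torsion and not assumed metric, so $\delta$ and $\tilde\nabla_U$ do not commute in any simple way --- and, as will be seen, the argument below never requires an explicit formula for the formal adjoint $\tilde\nabla^\ast_U$ of $\tilde\nabla_U$ on $1$-forms, so the nonmetricity of $\tilde\nabla$ causes no trouble.

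First I would integrate each side against $g$. On the left, using $(U+\divv U)\psi=\divv(\psi\,U)$ for $\psi\in C^\infty(M)$ and integrating by parts twice,
\[
\int_M g\,(U+\divv U)(\delta\phi)\,\vol
=\int_M g\,\divv\big((\delta\phi)\,U\big)\,\vol
=-\int_M (\delta\phi)\,U(g)\,\vol
=-\int_M \langle\phi,\,d(Ug)\rangle\,\vol ,
\]
the last step because $\delta=d^\ast$ is the formal adjoint of $d$. On the right, peeling off $\delta=d^\ast$ and then the two formal adjoints appearing there,
\[
-\int_M g\,\delta\big((\tilde\nabla^\ast_U+(\tilde\nabla'U)^{\trans\ast})\phi\big)\,\vol
=-\int_M \big\langle(\tilde\nabla^\ast_U+(\tilde\nabla'U)^{\trans\ast})\phi,\,dg\big\rangle\,\vol
=-\int_M \big\langle\phi,\,\tilde\nabla_U(dg)+S(dg)\big\rangle\,\vol ,
\]
where $S$ is the formal adjoint of $(\tilde\nabla'U)^{\trans\ast}$; since the latter is zeroth-order, $S$ is its fibrewise metric adjoint, i.e.\ the transpose $\phi\mapsto\big(w\mapsto\phi(\tilde\nabla'_wU)\big)$ of the bundle endomorphism $v\mapsto\tilde\nabla'_vU$. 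Comparing the two displays, the lemma reduces to the pointwise identity of $1$-forms
\[
d(Ug)=\tilde\nabla_U(dg)+S(dg),\qquad g\in C^\infty(M).
\]

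To finish I would evaluate this on an arbitrary vector field $V$. From $\big(\tilde\nabla_U(dg)\big)(V)=U(Vg)-(\tilde\nabla_UV)g$, $\big(S(dg)\big)(V)=(dg)(\tilde\nabla'_VU)=(\tilde\nabla'_VU)g$ and $\big(d(Ug)\big)(V)=V(Ug)$, the identity becomes $(\tilde\nabla'_VU-\tilde\nabla_UV)g=V(Ug)-U(Vg)=[V,U]g$ for every $g$, that is $\tilde\nabla'_VU-\tilde\nabla_UV=[V,U]$. This in turn is immediate from \eqref{eq:adjointdefn} and the definition of torsion: $\tilde\nabla'_VU=\tilde\nabla_VU-\tilde T(V,U)$ and $\tilde T(V,U)=\tilde\nabla_VU-\tilde\nabla_UV-[V,U]$ together give $\tilde\nabla'_VU=\tilde\nabla_UV+[V,U]$. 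I do not expect any genuine obstacle beyond this bookkeeping; the only points needing care are to arrange the duality so that $\tilde\nabla^\ast_U$ is never evaluated, and to keep straight which transpose and which metric adjoint enter $(\tilde\nabla'U)^{\trans\ast}$ and its adjoint $S$.
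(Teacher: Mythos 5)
Your argument is correct and is essentially the paper's proof run in reverse: both reduce the lemma to the pointwise identity $d(Ug)=\tilde{\nabla}_U(dg)+(\tilde{\nabla}'U)^{\trans}(dg)$ and then dualize using $U^\ast=-U-\divv U$ (which you carry out explicitly with a test function $g$ rather than formally). The only cosmetic difference is that you verify that identity directly from the definition of torsion and the adjoint connection, whereas the paper quotes $\L_U\phi=\tilde{\nabla}_U\phi+\phi(\tilde{\nabla}'U)$ and the commutation of $d$ with $\L_U$ — the same fact in different clothing.
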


\begin{proof}
  As a linear connection, $\tilde{\nabla}$ satisfies
  \begin{equation}
    \L_U \phi = \tilde{\nabla}_U \phi + \phi(\tilde{\nabla}' U).
  \end{equation}
  Since $d$ commutes with Lie differentiation, we thus have
  \begin{equation}
    d U f = \L_U df = \tilde{\nabla}_U df + df(\tilde{\nabla}' U)= \tilde{\nabla}_U df + (\tilde{\nabla}' U)^{\trans} df.
  \end{equation}
  By duality this implies
  \begin{equation}
    U^\ast \delta \phi = \delta \left(\tilde{\nabla}^\ast_U + (\tilde{\nabla}' U)^{\trans \ast}\right)\phi
  \end{equation}
  and therefore
  \begin{equation}
    (U+\divv U)\delta \phi = -\delta \left(\tilde{\nabla}^\ast_U+  (\tilde{\nabla}' U)^{\trans \ast}\right)\phi
  \end{equation}
  since $U^\ast = -U -\divv U$.
\end{proof}

With respect to the induced metric, the formal adjoint $\nabla_U^\ast$
of the differential operator $\nabla_U$ acting on $1$-forms is given
by
\begin{equation}
  \nabla_U^\ast = -\nabla_U-\divv U.
\end{equation}
More generally, we have the following lemma.

\begin{lemma}\label{lem:newcom2}
  For any smooth vector field $U$ and metric connection
  $\tilde{\nabla}'$ with adjoint $\tilde{\nabla}$ we have
  \begin{equation}
    \tilde{\nabla}^\ast_U = - \tilde{\nabla}_U - \divv U - \tilde{T}(U,\cdot)-\tilde{T}(U,\cdot)^\ast.
  \end{equation}
\end{lemma}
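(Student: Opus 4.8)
For any smooth vector field $U$ and metric connection $\tilde\nabla'$ with adjoint $\tilde\nabla$, we have
$$\tilde\nabla_U^\ast = -\tilde\nabla_U - \divv U - \tilde T(U,\cdot) - \tilde T(U,\cdot)^\ast,$$
as operators on $1$-forms (with respect to the induced metric).

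The plan is to reduce everything to Lemma~\ref{lem:newcom}. That lemma already identifies $U^\ast\delta\phi = \delta\big(\tilde\nabla_U^\ast + (\tilde\nabla'U)^{\trans\ast}\big)\phi$, so the content here is to unwind the tensorial term $(\tilde\nabla'U)^{\trans\ast}$ and compare with the known formula $\nabla_U^\ast = -\nabla_U - \divv U$ for the Levi-Civita connection. First I would recall that for \emph{any} linear connection, the operator $\nabla_U$ on $1$-forms differs from $\tilde\nabla_U$ by a zero-order term governed by the difference tensor of the two connections; since both $\nabla$ and $\tilde\nabla$ are metric (hence differ by a tensor $S$ with $S_v$ skew-symmetric on each tangent space), one gets $\tilde\nabla_U\phi = \nabla_U\phi - \phi(S_U\cdot)$ and, because $S_U$ is skew, the adjoint contributes with the opposite sign. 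The cleanest route, though, is to work directly with the adjoint connection $\tilde\nabla'$ and use $\tilde\nabla'_v U = \tilde\nabla_v U - \tilde T(v,U)$, i.e.\ $(\tilde\nabla'U) = (\tilde\nabla U) - \tilde T(\cdot,U)$ as a $(1,1)$-tensor, so that $(\tilde\nabla'U)^{\trans\ast} = (\tilde\nabla U)^{\trans\ast} - \tilde T(\cdot,U)^{\trans\ast}$.

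The key computational step is to evaluate $\delta\big((\tilde\nabla U)^{\trans\ast}\phi\big)$ and relate it to $\divv U$ and $\tilde\nabla_U$. Here I would use the identity $\delta\big(W^{\trans\ast}\phi\big)$ for a $(1,1)$-tensor $W$ in terms of $\trace W$ and $\trace(\tilde\nabla W)$-type terms, applied to $W = \tilde\nabla U$; since $\trace(\tilde\nabla U) = \divv U$ by \eqref{eqn:divU} and the relevant second-order trace reproduces $\delta\circ\tilde\nabla_U$, the skew part and the symmetric part of $\tilde\nabla U$ separate out. Concretely, one expands in a local frame $\{A_i\}$, uses the adaptedness/metric property so that $\delta\phi = -\sum_i (\tilde\nabla_{A_i}\phi)(A_i) - (\text{torsion correction})$, and pushes the operator $\tilde\nabla_U$ past the trace. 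Comparing the resulting expression for $U^\ast\delta\phi$ with the Levi-Civita computation $U^\ast\delta\phi = \delta(-\nabla_U-\divv U)\phi$ — which holds because for the Levi-Civita connection the torsion vanishes and Lemma~\ref{lem:newcom} degenerates to the standard formula — and then peeling off $\delta$ (valid since $\phi$ ranges over all compactly supported $1$-forms and $\delta$ determines its argument up to an element of the kernel, which one checks is zero by bootstrapping through the relation itself), yields the claimed identity with the two torsion terms $\tilde T(U,\cdot)$ and $\tilde T(U,\cdot)^\ast$.

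I expect the main obstacle to be the bookkeeping in the second-order trace term: when one writes $\delta$ in terms of $\tilde\nabla$ rather than $\nabla$ (where \eqref{eqn:deltaphi} with $\nabla$ replaced by $\hat\nabla$ famously \emph{fails}), a torsion correction of the form $\phi(\tilde T(A_i,\cdot)^\ast A_i)$ or similar appears, and one must verify that this correction, together with the one coming from $(\tilde\nabla'U)^{\trans\ast}$, assembles into exactly the symmetric combination $\tilde T(U,\cdot) + \tilde T(U,\cdot)^\ast$ rather than, say, only the antisymmetric part. A safer alternative that sidesteps this: prove the formula first in the special case $\tilde\nabla = \nabla$ (Levi-Civita, torsion zero) where it reads $\nabla_U^\ast = -\nabla_U - \divv U$ as already recorded, then treat the general metric connection by writing $\tilde\nabla = \nabla + S$ with $S_v$ skew, compute the difference of both sides under this perturbation, and check the torsion terms match $\tilde T(v,w) = S_v w - S_w v$ — this localizes the entire difficulty to an algebraic identity about the skew-symmetric tensor $S$, which is routine.
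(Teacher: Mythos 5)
There is a genuine gap, in fact two. First, your primary route tries to obtain Lemma \ref{lem:newcom2} from Lemma \ref{lem:newcom} by computing $U^\ast\delta\phi$ in two ways and then ``peeling off $\delta$''. That last step is invalid: an identity of the form $\delta(A\phi)=\delta(B\phi)$ for all compactly supported $\phi$ only says that $(A-B)\phi$ is co-closed for every $\phi$, and the kernel of $\delta$ on $1$-forms is enormous; ``bootstrapping through the relation itself'' does not recover the operator identity $A=B$. The logical direction is also backwards relative to the paper: Lemma \ref{lem:newcom} concerns the composite $U^\ast\delta$ and is later \emph{combined with} Lemma \ref{lem:newcom2} (in Lemma \ref{lem:finalthing}); Lemma \ref{lem:newcom2} is an independent statement about the formal $L^2$-adjoint of the first-order operator $\tilde\nabla_U$ on $1$-forms and must be proved directly.

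Second, your ``safer alternative'' rests on the assertion that ``both $\nabla$ and $\tilde\nabla$ are metric''. Read the hypothesis again: it is $\tilde\nabla'$ that is metric, and $\tilde\nabla$ is its \emph{adjoint}, which in general is not metric --- this is exactly the situation in the paper, where $\breve\nabla$ is the metric Le Jan--Watanabe connection and $\hat\nabla$ is its (non-metric) adjoint. If $\tilde\nabla$ itself were metric, the same perturbation argument would give $\tilde\nabla_U^\ast=-\tilde\nabla_U-\divv U$ with \emph{no} torsion terms, contradicting the statement; so the misidentification is not harmless bookkeeping but changes the answer. The repair is to decompose $\tilde\nabla'=\nabla+S$ with $S_v$ skew (so $(\tilde\nabla'_U)^\ast=-\tilde\nabla'_U-\divv U$) and then pass to $\tilde\nabla_U\phi=\tilde\nabla'_U\phi-\phi(\tilde T(U,\newdot))$, which produces the two torsion terms. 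The paper's own proof is shorter still and avoids the Levi-Civita connection entirely: integrate $\L_U(\langle\phi,\psi\rangle\mu_g)$ over $M$ (which vanishes by Stokes), expand $U\langle\phi,\psi\rangle$ using the metricity of $\tilde\nabla'$, substitute $\tilde\nabla'_U\phi=\tilde\nabla_U\phi+\phi(\tilde T(U,\newdot))$, and read off the adjoint. You should adopt one of these two direct arguments rather than either route in the proposal.
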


\begin{proof}
  Denoting by $\mu_g$ the Riemannian volume density, the divergence of
  a vector field $U$ satisfies $\L_U \mu_g = (\divv U) \mu_g$ and thus
  for compactly supported $1$-forms $\phi,\psi$ we have
  \begin{equation}
    \begin{split}
      \L_U(\langle \phi,\psi\rangle \mu_g) &= \langle \tilde{\nabla}'_U \phi,\psi \rangle \mu_g + \langle  \phi,\tilde{\nabla}'_U \psi \rangle \mu_g + (\divv U) \langle \phi, \psi\rangle \mu_g\\
      &= \langle \tilde{\nabla}_U \phi,\psi \rangle \mu_g + \langle  \phi,\tilde{\nabla}_U \psi \rangle \mu_g+ \langle \phi(\tilde{T}(U,\newdot)),\psi\rangle\mu_g \\
      &\quad + \left\langle
        \phi,\psi(\tilde{T}(U,\newdot))\right\rangle\mu_g +(\divv U)
      \langle \phi, \psi\rangle \mu_g
    \end{split}
  \end{equation}
  from which the result follows, since $\int_M \L_U(\langle \phi,\psi\rangle \mu_g) = 0$, by Stokes'
  theorem.
\end{proof}

The map $A$ also induces a differential operator $\hat{\delta}$,
mapping $1$-forms to functions by
\begin{equation}
  \hat{\delta} \phi := - \sum_{i=1}^m\iota_{A_i} \L_{A_i}\phi.
\end{equation}
Since $\L_{A_i} \phi = \iota_{A_i}d\phi + d(\iota_{A_i} \phi)$, the
generator $\SL$ can be expressed in terms of $\hat{\delta}$ by
\begin{equation}\label{eq:genrel}
  \SL = \L_{A_0}-(\hat{\delta}d + d \hat{\delta}).
\end{equation}
Clearly $\hat{\delta}^2=0$, so to find an analogue of the second
commutation rule in Lemma \ref{Lemma:CommRules} for $\hat{\delta}$ and
$\SL$ it suffices to calculate the Lie derivative of $\hat{\delta}$ in
the direction $A_0$. This is the main objective of the remainder of
this section. Note that $\hat{\delta}$ need not agree with the
codifferential $\delta$. For any smooth vector field $U$ and linear
connection $\tilde{\nabla}$ with adjoint $\tilde{\nabla}'$ we have
\begin{equation}\label{eqn:generadj}
  \L_U\phi = (\tilde{\nabla}_U \phi) + \phi(\tilde{\nabla}'U)
\end{equation}
and therefore
\begin{equation}\label{eqn:hatbreve}
  \hat{\delta} \phi = - \sum_{i=1}^m(\hat{\nabla}_{A_i} \phi)({A_i}) -\sum_{i=1}^m \phi(\breve{\nabla}_{A_i}{A_i}) = - \sum_{i=1}^m(\hat{\nabla}_{A_i} \phi)({A_i})
\end{equation}
or alternatively
\begin{equation}\label{eqn:brevehat}
  \hat{\delta} \phi = - \sum_{i=1}^m(\breve{\nabla}_{A_i} \phi)({A_i}) -\sum_{i=1}^m \phi(\hat{\nabla}_{A_i}{A_i}) = - \sum_{i=1}^m(\breve{\nabla}_{A_i} \phi)({A_i})
\end{equation}
by the Le Jan-Watanabe property and the fact that
$\breve{T}(A_i,A_i)=0$. Applying \eqref{eqn:generadj} to the
Levi-Civita connection gives
\begin{equation}
  \hat{\delta} \phi = - \sum_{i=1}^m({\nabla}_{A_i} \phi)({A_i}) - \sum_{i=1}^m\phi({\nabla}_{A_i}{A_i})
\end{equation}
and so by \eqref{eqn:deltaphi} we have
\begin{equation}\label{eq:relation}
  \hat{\delta} \phi = \delta \phi - \phi\left(\sum_{i=1}^m{\nabla}_{A_i}{A_i}\right)
\end{equation}
which expresses the difference of the operators $\delta$ and
$\hat{\delta}$.

\begin{lemma}\label{lem:finalthing}
  For any smooth vector field $U$ and $1$-form $\phi$ we have
  \begin{equation}
    \left(U +\trace \hat{\nabla}U\right)\hat{\delta}\phi = \hat{\delta}\left(\hat{\nabla}_U - (\breve{\nabla}U)^{\trans \ast}- \breve{T}(U,\cdot)-\breve{T}(U,\cdot)^\ast +\trace \hat{\nabla}U\right)\phi+\phi(U^A)
  \end{equation}
  where the vector field $U^A$ is defined by
  \begin{equation}
    U^A:=-\sum_{i=1}^m \left((\hat{\nabla}U)^{ \ast} +\hat{\nabla} U\right)(\nabla_{A_i}A_i)-\sum_{i=1}^m [U,\nabla_{A_i}A_i].
  \end{equation}
\end{lemma}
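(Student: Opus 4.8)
The plan is to compute $\L_{A_0}\hat\delta$ directly, starting from the Lie-derivative formula \eqref{eqn:generadj} applied with the adjoint pair $(\hat\nabla,\breve\nabla)$. Writing $\hat\delta\phi = -\sum_i(\hat\nabla_{A_i}\phi)(A_i)$ as in \eqref{eqn:hatbreve}, I would apply $\L_{A_0}$ and push it through each factor using the Leibniz rule, treating $\hat\delta\phi$ as the full contraction of $\hat\nabla\phi\in\Gamma(T^*M\otimes T^*M)$ against $\sum_i A_i\otimes A_i$. Since $\hat\delta$ maps $1$-forms to functions, the left side is just $A_0(\hat\delta\phi) = U(\hat\delta\phi)$ with $U=A_0$, so really we want to rearrange $A_0(\hat\delta\phi)$ into $\hat\delta(\text{something applied to }\phi) + \phi(U^A)$. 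The natural mechanism is: for a vector field $U$, the identity $\L_U(\hat\nabla\phi) = \hat\nabla(\L_U\phi) + (\text{curvature/torsion corrections from }\hat\nabla_U\text{ not being }\L_U)$, combined with $\L_U\phi = \hat\nabla_U\phi + \phi(\breve\nabla U)$, and the companion identity for how $\L_U$ acts on the contracting field $\sum_i A_i\otimes A_i$, namely $\L_U(A_i\otimes A_i)$ expressed via $\hat\nabla U$ and torsion.

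First I would establish a clean ``master commutation'' identity: for any metric connection and any smooth $U$,
\[
\L_U\bigl((\hat\nabla_{A_i}\phi)(A_i)\bigr)
=\bigl(\hat\nabla_{A_i}(\L_U\phi)\bigr)(A_i)
+\text{correction terms},
\]
where the corrections come from (a) $\hat\nabla$ not commuting with $\L_U$ — producing a term with $\hat\nabla_{\,\cdot\,}U$ and the curvature $\breve R$, which after summing over $i$ and using the definition \eqref{eqn:Ric} of $\breve\Ric$ and the Le~Jan--Watanabe property $\sum_i\breve\nabla_{A_i}A_i=0$ will collapse — and (b) the commutator $[U,A_i]$ appearing when $\L_U$ hits the argument $A_i$. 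Then I substitute $\L_U\phi=\hat\nabla_U\phi+\phi(\breve\nabla U)$ inside the first term to generate the $\hat\nabla_U$ piece and the $(\breve\nabla U)^{\trans\ast}$ piece of the claimed operator; Lemma~\ref{lem:newcom2} (with $\tilde\nabla'=\breve\nabla$, $\tilde\nabla=\hat\nabla$, torsion $\breve T$) converts $\hat\nabla_U^\ast$ into $-\hat\nabla_U-\divv U-\breve T(U,\cdot)-\breve T(U,\cdot)^\ast$, accounting for the $-\breve T(U,\cdot)-\breve T(U,\cdot)^\ast$ terms and, via $\divv U = \trace\hat\nabla U$ from \eqref{eqn:divU}, for the two $\trace\hat\nabla U$ terms. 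The leftover genuinely-zeroth-order-in-$\phi$ contributions — the $[U,A_i]$ commutator terms and whatever survives from the curvature correction after the $\breve\Ric$ cancellation — I would collect and show they equal $\phi(U^A)$ with $U^A = -\sum_i\bigl((\hat\nabla U)^\ast+\hat\nabla U\bigr)(\nabla_{A_i}A_i) - \sum_i[U,\nabla_{A_i}A_i]$; here the key input is that $\sum_i\nabla_{A_i}A_i = \sum_i(\nabla_{A_i}A_i - \breve\nabla_{A_i}A_i)$ by the Le~Jan--Watanabe property, so the commutators and the symmetrized $\hat\nabla U$ terms reorganize around this particular vector field.

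The main obstacle I anticipate is the bookkeeping of the curvature correction in step (a): getting $\sum_i \breve R(U,A_i)A_i$ (or its transpose, depending on index conventions) to reduce correctly using \eqref{eqn:Ric} and \eqref{eq:usingRic}, and making sure the resulting $\breve\Ric$-type term is genuinely absorbed rather than left dangling — it must either cancel or be hidden inside $U^A$ once one rewrites things in terms of $\nabla_{A_i}A_i$ versus $\breve\nabla_{A_i}A_i$. A secondary subtlety is tracking transposes: $\hat\delta$ is built from $\hat\nabla$ but the ``natural'' adjoint computations want Levi-Civita or $\breve\nabla$, so every time I swap connections (via \eqref{eqn:hatbreve}, \eqref{eqn:brevehat}, \eqref{eq:relation}) I must carry the torsion terms $\breve T = -\hat T$ correctly, and the $(\breve\nabla U)^{\trans\ast}$ in the statement (rather than $(\hat\nabla U)^{\trans\ast}$) is a hint that the cleanest route uses \eqref{eqn:brevehat} — writing $\hat\delta\phi = -\sum_i(\breve\nabla_{A_i}\phi)(A_i) - \sum_i\phi(\hat\nabla_{A_i}A_i)$ — for part of the computation and \eqref{eqn:hatbreve} for another, which will be the delicate part to get consistent. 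Once the identity is assembled, verifying it is purely algebraic, so no analytic hypotheses enter.
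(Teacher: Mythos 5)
Your route is genuinely different from the paper's, and the difference matters: the paper never touches curvature or a direct Lie-derivative computation on $\hat{\delta}$. Its proof is a three-step transfer argument. First, Lemma \ref{lem:newcom} applied with $\tilde{\nabla}=\hat{\nabla}$, $\tilde{\nabla}'=\breve{\nabla}$, together with Lemma \ref{lem:newcom2}, gives the commutation rule for the \emph{true} codifferential, $(U+\divv U)\delta\phi=\delta\bigl(\hat{\nabla}_U+\divv U+\hat{T}(U,\cdot)+\hat{T}(U,\cdot)^\ast-(\breve{\nabla}U)^{\trans\ast}\bigr)\phi$. Second, both sides are converted from $\delta$ to $\hat{\delta}$ using the purely zeroth-order relation \eqref{eq:relation}, $\hat{\delta}\phi=\delta\phi-\phi(\sum_i\nabla_{A_i}A_i)$; each conversion produces explicit correction terms involving $\nabla_{A_i}A_i$. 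Third, the mismatched corrections are collected, and with $\divv U=\trace\hat{\nabla}U$ from \eqref{eqn:divU} they are exactly $\phi(U^A)$. Nothing beyond these already-proved lemmas and \eqref{eq:relation} is needed.

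By contrast, your plan to apply $\L_U$ directly to $\hat{\delta}\phi=-\sum_i(\hat{\nabla}_{A_i}\phi)(A_i)$ has a gap precisely at its crux. Commuting $\L_U$ past $\hat{\nabla}_{A_i}$ produces the Lie derivative of the connection, i.e.\ terms containing the curvature of $\hat{\nabla}$ \emph{and} second covariant derivatives of $U$, plus the commutators $[U,A_i]$ from the contracting slot; the entire content of the lemma is that these reassemble into the first-order operator on the right together with $\phi(U^A)$, and your proposal leaves this at the level of ``I would collect and show.'' Moreover, your anticipated use of \eqref{eqn:Ric} and \eqref{eq:usingRic} to absorb a $\breve{\Ric}$-type term is misplaced: Lemma \ref{lem:finalthing} is a purely first-order identity in which no Ricci term can survive, and \eqref{eq:usingRic} enters only later, in Proposition \ref{prop:finalthmcom}, when the second-order part of $\SL$ is commuted past $\hat{\delta}$. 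The ingredients you name (Lemma \ref{lem:newcom2}, \eqref{eqn:divU}, the rewriting of $\sum_i\nabla_{A_i}A_i$ as in Lemma \ref{lem:contor}) are the right ones, and the identity is true, so your computation could in principle be completed; but as it stands the decisive cancellation is not carried out, and the paper's detour through $\delta$ and \eqref{eq:relation} avoids the curvature bookkeeping altogether.
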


\begin{proof}
  By Lemmas \ref{lem:newcom} and \ref{lem:newcom2} we have
  \begin{equation}
    (U+ \divv U)\delta \phi = \delta(\hat{\nabla}_U + \divv U + \hat{T}(U,\cdot)+\hat{T}(U,\cdot)^\ast - (\breve{\nabla}U)^{\trans \ast})\phi.
  \end{equation}
  By \eqref{eq:relation} we have
  \begin{equation}
    \begin{split}
      (U+ \divv U)\delta \phi &= (U+\divv U)\hat{\delta}\phi + (\divv U) \phi(\nabla_{A_i}A_i)\\
      &\quad + (\hat{\nabla}_U \phi)(\nabla_{A_i}A_i) + \phi
      (\hat{\nabla}_U \nabla_{A_i}A_i)
    \end{split}
  \end{equation}
  and
  \begin{equation}
    \begin{split}
      &\delta\left(\hat{\nabla}_U + \divv U + \hat{T}(U,\cdot)+\hat{T}(U,\cdot)^\ast - (\breve{\nabla}U)^{\trans \ast}\right)\phi \\
      =\text{ }&\hat{\delta}\left(\hat{\nabla}_U + \divv U - \breve{T}(U,\cdot)-\breve{T}(U,\cdot)^\ast - (\breve{\nabla}U)^{\trans \ast}\right)\phi + (\hat{\nabla}_U \phi)(\nabla_{A_i}A_i)\\
      &+ \left((\divv U - \breve{T}(U,\cdot)-\breve{T}(U,\cdot)^\ast -(\breve{\nabla}U)^{\trans
          \ast})\phi\right)(\nabla_{A_i}A_i).
    \end{split}
  \end{equation}
  Rearranging, the result follows by equation \eqref{eqn:divU}.
\end{proof}

Note that the vector field $A_0^A$ appears to depend on the
Levi-Civita connection via the sum of the vector fields
$\nabla_{A_i}A_i$. It is clear that all other objects appearing in the
definition of $A_0^A$ can be calculated explicitly in terms of $A$ and
$A_0$, by formula \eqref{eq:covder}. The following lemma, combined
with formula \eqref{eqn:torinX}, shows that the sum of the vector
fields $\nabla_{A_i} A_i$ can also be expressed directly in terms of
$A$.

\begin{lemma}\label{lem:contor}
  We have
  \begin{equation}
    \sum_{i=1}^m \nabla_{A_i}{A_i} = - \sum_{i=1}^m  \breve{T}(\newdot,A_i)^\ast (A_i)
  \end{equation}
  where $\breve{T}$ denotes the torsion of the Le Jan-Watanabe
  connection.
\end{lemma}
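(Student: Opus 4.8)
The plan is to compare the Levi--Civita connection $\nabla$ with the Le~Jan--Watanabe connection $\breve{\nabla}$ via their difference tensor. Set $\Gamma(v,u):=\nabla_v u-\breve{\nabla}_v u$. Since the difference of two linear connections is a genuine $(1,2)$-tensor, $\Gamma(A_i,A_i)$ is defined pointwise, and by the Le~Jan--Watanabe property $\sum_{i=1}^m\breve{\nabla}_{A_i}A_i=0$ we have $\sum_{i=1}^m\nabla_{A_i}A_i=\sum_{i=1}^m\Gamma(A_i,A_i)$. So it suffices to show $\Gamma(A_i,A_i)=-\breve{T}(\newdot,A_i)^\ast(A_i)$ for each $i$.

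Two structural properties of $\Gamma$ will do the work. First, both $\nabla$ and $\breve{\nabla}$ are adapted to the induced metric, so expanding $v\langle u,w\rangle$ with each connection and subtracting gives $\langle\Gamma(v,u),w\rangle+\langle u,\Gamma(v,w)\rangle=0$; that is, $\langle\Gamma(v,\newdot),\newdot\rangle$ is antisymmetric in its last two slots, and in particular $\langle\Gamma(v,u),u\rangle=0$. Second, $\nabla$ is torsion-free while $\breve{\nabla}$ has torsion $\breve{T}$, so subtracting the two torsion identities yields $\Gamma(v,u)-\Gamma(u,v)=-\breve{T}(v,u)$.

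Now fix $i$ and an arbitrary $c\in T_xM$. By antisymmetry in the last two slots (with $v=u=A_i$), $\langle\Gamma(A_i,A_i),c\rangle=-\langle\Gamma(A_i,c),A_i\rangle$. Substituting $\Gamma(A_i,c)=\Gamma(c,A_i)-\breve{T}(A_i,c)$ from the torsion identity and using $\langle\Gamma(c,A_i),A_i\rangle=0$, we obtain $\langle\Gamma(A_i,A_i),c\rangle=\langle\breve{T}(A_i,c),A_i\rangle=-\langle\breve{T}(c,A_i),A_i\rangle=-\langle c,\breve{T}(\newdot,A_i)^\ast(A_i)\rangle$, where the last equality is just the definition of the fibrewise adjoint of the linear map $w\mapsto\breve{T}(w,A_i)$. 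Since $c$ was arbitrary, $\Gamma(A_i,A_i)=-\breve{T}(\newdot,A_i)^\ast(A_i)$; summing over $i$ gives the claim.

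The only real obstacle is bookkeeping: keeping the antisymmetry of $\breve{T}$ and the two possible adjoint conventions (adjoint in the first versus the second argument of $\breve{T}$) straight, so that the signs come out right. Nothing deeper is needed; one could instead solve the full Koszul-type system for $\Gamma$ in terms of $\breve{T}$ and then specialise to $v=u=A_i$, but the shortcut above avoids that computation.
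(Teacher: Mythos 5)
Your proof is correct and follows essentially the same route as the paper: reduce via the Le~Jan--Watanabe property to the difference (contorsion) tensor evaluated at $(A_i,A_i)$, and then identify $\langle\Gamma(A_i,A_i),c\rangle$ with $-\langle\breve{T}(c,A_i),A_i\rangle$. The only difference is that the paper quotes the standard component identity $\tensor{\breve{K}}{^i_j_j}=\tensor{\breve{T}}{_j^i_j}$ from the literature, whereas you derive it directly from metric-compatibility of both connections and the torsion relation, which makes the argument self-contained.
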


\begin{proof}
  Suppressing the summation over $i$, the Le Jan-Watanabe property
  implies
  \begin{equation}\label{eqn:nablaxx}
    \nabla_{A_i}{A_i} = \breve{\nabla}_{A_i}{A_i} - \breve{K}(A_i,A_i) = -\breve{K}(A_i,A_i)
  \end{equation}
  where $\breve{K}$ denotes the contorsion tensor of
  $\breve{\nabla}$. The contorsion tensor measures the extent to which
  a metric connection fails to be the Levi-Civita connection,
  vanishing if the connection is torsion free. It is discussed in
  \cite{K-N} and \cite{Nakahara}. The components of $\breve{K}$
  satisfy $\tensor{\breve{K}}{^i_j_j} = \tensor{\breve{T}}{_j^i_j}$,
  which is to say
  \begin{equation}
    \breve{K}(A_i,A_i) = \big(\breve{T}(\newdot,A_i)^\flat\big)(A_i)^\sharp,
  \end{equation}
  where $\flat$ and $\sharp$ are the musical isomorphisms associated
  to the induced metric. This implies
  \begin{equation}
    \langle \breve{K}(A_i,A_i),U\rangle = \langle \breve{T}(U,A_i),A_i\rangle
  \end{equation}
  for all smooth vector fields $U$, and therefore
  \begin{equation}\label{eqn:kxx}
    \breve{K}(A_i,A_i) = \breve{T}(\newdot,A_i)^\ast (A_i)
  \end{equation}
  as required.
\end{proof}

Consequently
\begin{equation}\label{eq:A0def}
  A_0^A=\sum_{i=1}^m \left(\hat{\nabla} A_0 + (\hat{\nabla}A_0)^{ \ast}\right)\left(\breve{T}(\newdot,A_i)^\ast (A_i)\right)+[A_0,\breve{T}(\newdot,A_i)^\ast (A_i)].
\end{equation}

\subsection{Commutation formula}

We have, in summary, the following commutation rule, extending formula
\eqref{eq:comrulediv}.

\begin{prop}\label{prop:finalthmcom}
  For any smooth $1$-form $\phi$ we have

 \begin{equation}
 \begin{split}
    &\left(\SL+\trace\hat{\nabla}A_0\right)\hat{\delta} \phi \\
    &\quad\quad\quad=\text{ } \hat{\delta}\left( \trace \hat{\nabla}^2 +\hat{\nabla}_{A_0} -\breve{\Ric}_{A_0}-\hat{\nabla}{A_0} - (\hat{\nabla}{A_0})^{\ast\trans}+\trace \hat{\nabla}A_0\right)\phi+\phi(A_0^A)
    \end{split}
  \end{equation}  
  
  where the vector field $A_0^A$ is given by \eqref{eq:A0def} and $\breve{\Ric}_{A_0}:= \breve{\Ric} -\breve{\nabla}A_0$.
\end{prop}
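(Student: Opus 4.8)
The plan is to combine three ingredients assembled earlier: the relation \eqref{eq:genrel} expressing $\SL$ in terms of $\hat\delta$ and $d$, the Le~Jan--Watanabe--type formula \eqref{eq:usingRic} relating $\sum_i \L_{A_i}\L_{A_i}$ to $\trace\hat\nabla^2$, and the divergence-type commutation rule of Lemma~\ref{lem:finalthing} applied with $U=A_0$. First I would compute $\L_{A_0}\hat\delta\phi$, the Lie derivative of $\hat\delta$ along the drift; since $\hat\delta\phi = -\sum_i \iota_{A_i}\L_{A_i}\phi$ this is a matter of pushing $\L_{A_0}$ through the sum and the interior products, producing terms with $[A_0,A_i]$ and $\L_{A_0}\L_{A_i}\L_{A_i}\phi$. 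I expect Lemma~\ref{lem:finalthing} (with $U=A_0$) to be precisely the packaging that turns this into $\hat\delta$ of a first-order operator applied to $\phi$ plus the correction term $\phi(A_0^A)$.

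Concretely, the steps I would carry out are: (1) starting from \eqref{eq:genrel}, write $\SL\hat\delta\phi = \L_{A_0}\hat\delta\phi - \hat\delta d\hat\delta\phi - d\hat\delta\hat\delta\phi$; since $\hat\delta^2=0$ the last term drops and $\hat\delta d\hat\delta\phi = \hat\delta(\SL - \L_{A_0})\phi + \hat\delta\hat\delta d\phi$-type manipulations let me re-express things so that $\SL$ reappears inside $\hat\delta$ as $\trace\hat\nabla^2$ via \eqref{eq:usingRic} and the definition $\breve\Ric_{A_0} := \breve\Ric - \breve\nabla A_0$. (2) For the genuinely new piece, the commutator $[\L_{A_0},\hat\delta]\phi$, invoke Lemma~\ref{lem:finalthing} with $U=A_0$: it gives $(A_0 + \trace\hat\nabla A_0)\hat\delta\phi = \hat\delta(\hat\nabla_{A_0} - (\breve\nabla A_0)^{\trans\ast} - \breve T(A_0,\cdot) - \breve T(A_0,\cdot)^\ast + \trace\hat\nabla A_0)\phi + \phi(A_0^A)$, with $A_0^A$ exactly \eqref{eq:A0def} after Lemma~\ref{lem:contor}. (3) Identify $-(\breve\nabla A_0)^{\trans\ast} - \breve T(A_0,\cdot) - \breve T(A_0,\cdot)^\ast$ with $-\hat\nabla A_0 - (\hat\nabla A_0)^{\ast\trans}$ using the adjoint-connection relation $\hat\nabla_v U = \breve\nabla_v U - \breve T(v,U)$ and $\breve T = -\hat T$; this is the routine bookkeeping step that reconciles the $\breve{}$-notation of Lemma~\ref{lem:finalthing} with the $\hat{}$-notation of the proposition statement. (4) Assemble: the $\trace\hat\nabla^2$ term comes from the second-order part of $\SL$ through \eqref{eq:usingRic}, the $\hat\nabla_{A_0}$ and endomorphism terms come from step (2)--(3), the curvature correction $-\breve\Ric_{A_0}$ absorbs both the $-\phi(\breve\Ric)$ from \eqref{eq:usingRic} and the $\breve\nabla A_0$ shift, and the leftover zeroth-order scalar $\trace\hat\nabla A_0 = \divv A_0$ lands on both sides as the $(\SL + \trace\hat\nabla A_0)$ prefactor, matching Lemma~\ref{lem:finalthing}.

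The main obstacle will be step (3) together with correctly tracking the transpose/adjoint decorations: the operators $\hat\nabla_{A_0}$ (a first-order operator on $1$-forms), $(\hat\nabla A_0)^\ast$ (the fibrewise adjoint of the endomorphism $v\mapsto\hat\nabla_v A_0$), and $(\hat\nabla A_0)^{\ast\trans}$ (acting on $1$-forms by duality) must be kept distinct, and one must verify that the torsion terms $\breve T(A_0,\cdot)$ and $\breve T(A_0,\cdot)^\ast$ combine with $(\breve\nabla A_0)^{\trans\ast}$ in exactly the way that produces the symmetric combination $\hat\nabla A_0 + (\hat\nabla A_0)^{\ast\trans}$. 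A secondary subtlety is making sure that $\hat\delta$ rather than $\delta$ appears throughout: since $\hat\delta$ and $\delta$ differ by the zeroth-order term $\phi(\sum_i\nabla_{A_i}A_i)$ (equation~\eqref{eq:relation}), and Lemma~\ref{lem:finalthing} is already stated for $\hat\delta$, this should be automatic, but one should double-check that no stray $\delta$-versus-$\hat\delta$ discrepancy contributes to $A_0^A$ beyond what Lemma~\ref{lem:contor} accounts for.
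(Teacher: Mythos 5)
Your proposal is correct and follows exactly the route the paper takes: its proof of Proposition \ref{prop:finalthmcom} is the one-line observation that the claim follows from Lemmas \ref{lem:finalthing} and \ref{lem:contor} together with \eqref{eq:usingRic} and \eqref{eq:genrel}, which is precisely the combination you spell out (apply \eqref{eq:genrel} to the function $\hat\delta\phi$, kill the $\hat\delta d\hat\delta$ term via $\hat\delta^2=0$ and \eqref{eq:usingRic}, handle the drift via Lemma \ref{lem:finalthing} with $U=A_0$, and reconcile the $\breve{\phantom{\nabla}}$- and $\hat{\phantom{\nabla}}$-decorated operators using $\hat\nabla_vA_0=\breve\nabla_vA_0-\breve T(v,A_0)$). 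Your flagged step (3) is indeed the only bookkeeping the paper leaves implicit, and it checks out under the paper's convention of identifying endomorphisms of $TM$ and $T^*M$ via the metric.
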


\begin{proof}
  The claim follows from Lemmas \ref{lem:finalthing} and
  \ref{lem:contor} and the relations \eqref{eq:usingRic} and
  \eqref{eq:genrel}.
\end{proof}

Finally, note that for a smooth function $f$, the codifferential
$\delta$ satisfies
\begin{equation}\label{eqn:decomplc}
  \langle df,\phi\rangle = f\delta(\phi)-\delta(f\phi).
\end{equation}
We will need an analogous formula for $\hat{\delta}$, as given by the
following lemma.

\begin{lemma}\label{lem:decomp}
  For any smooth function $f$ we have
  \begin{equation}
    \langle df,\phi\rangle = f\hat{\delta}(\phi)-\hat{\delta}(f\phi).
  \end{equation}
\end{lemma}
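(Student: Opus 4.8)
The plan is to deduce the identity from the corresponding one for the codifferential $\delta$, namely \eqref{eqn:decomplc}, via the relation \eqref{eq:relation} between $\hat\delta$ and $\delta$. Write $W:=\sum_{i=1}^m\nabla_{A_i}A_i$, so that $\hat\delta\psi=\delta\psi-\psi(W)$ for every smooth $1$-form $\psi$. Applying this to $\psi=\phi$ and to $\psi=f\phi$ and subtracting, the two copies of the correction term $\phi(W)$ — one carried along by the scalar factor $f$, the other produced from $(f\phi)(W)=f\,\phi(W)$ — cancel, leaving
\begin{equation*}
  f\hat\delta(\phi)-\hat\delta(f\phi)=f\delta(\phi)-\delta(f\phi)=\langle df,\phi\rangle,
\end{equation*}
which is the assertion, the last equality being \eqref{eqn:decomplc}.

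Alternatively, one can argue directly from the definition $\hat\delta\phi=-\sum_{i=1}^m\iota_{A_i}\L_{A_i}\phi$. The Leibniz rule for the Lie derivative gives $\L_{A_i}(f\phi)=(A_if)\,\phi+f\,\L_{A_i}\phi$, hence $\iota_{A_i}\L_{A_i}(f\phi)=(A_if)\,\phi(A_i)+f\,\iota_{A_i}\L_{A_i}\phi$, and summing over $i$ yields $\hat\delta(f\phi)=f\,\hat\delta(\phi)-\sum_{i=1}^m(A_if)\,\phi(A_i)$. It then remains to identify $\sum_{i=1}^m(A_if)\,\phi(A_i)=\langle df,\phi\rangle$; since $A_if=\langle\nabla f,A_i\rangle$ and $\phi(A_i)=\langle\phi^\sharp,A_i\rangle$, this reduces to the identity $\sum_{i=1}^m\langle v,A_i\rangle\langle u,A_i\rangle=\langle v,u\rangle$ for $v,u\in T_xM$, which is immediate from the definition of the induced metric together with $\langle v,A_ie_i\rangle=\langle A(x)^\ast v,e_i\rangle_{\R^m}$ (equivalently, $\{A_i(x)\}$ is a tight frame for the quotient metric on $T_xM$).

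There is no genuine obstacle: the lemma is a routine bookkeeping exercise. The only point to watch is that $\hat\delta\neq\delta$, so one cannot simply quote \eqref{eqn:decomplc}; in the first approach the discrepancy is exactly $\phi(W)$, and the content of the lemma is that this term is inert under the substitution $\phi\mapsto f\phi$, while in the second approach the sole computation is the one-line frame identity for the induced metric. I would present the first approach as the proof and relegate the second to a parenthetical remark.
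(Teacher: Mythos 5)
Your proof is correct, and your primary route is genuinely different from the paper's. The paper proves the lemma directly from the definition $\hat{\delta}\phi=-\sum_i\iota_{A_i}\L_{A_i}\phi$ by expanding $\L_{A_i}(f\phi)$ with Cartan's formula $\L_{A_i}=\iota_{A_i}d+d\iota_{A_i}$, killing the term $\iota_{A_i}\iota_{A_i}(df\wedge\phi)$, and then summing $-\,df(A_i)\,\phi(A_i)$ to $-\langle df,\phi\rangle$ via the tight-frame property of the $A_i$ for the induced metric. Your designated proof instead reduces the statement to the classical identity \eqref{eqn:decomplc} for $\delta$ by observing that the correction term $\phi\bigl(\sum_i\nabla_{A_i}A_i\bigr)$ in \eqref{eq:relation} is tensorial in $\phi$, hence cancels between $f\hat{\delta}(\phi)$ and $\hat{\delta}(f\phi)$; this is shorter and makes the mechanism transparent, at the cost of importing \eqref{eq:relation} (and hence the Levi-Civita connection) into what the paper keeps as a self-contained computation with $\hat{\delta}$ alone. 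Your parenthetical second argument, via the Leibniz rule $\L_{A_i}(f\phi)=(A_if)\phi+f\L_{A_i}\phi$, is essentially the paper's proof in a slightly more economical form, since it bypasses Cartan's formula and isolates exactly the one nontrivial input, namely the frame identity $\sum_i\langle v,A_i\rangle\langle u,A_i\rangle=\langle v,u\rangle$, which the paper uses only implicitly in its last line.
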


\begin{proof}
  Suppressing notationally the summation over $i$, we have
  \begin{equation}
    \begin{split}
      \hat{\delta}(f\phi)&= - \iota_{A_i} \L_{A_i} (f \phi)\\
      &= - \iota_{A_i} \left(\iota_{A_i} d(f\phi) + d(\iota_{A_i}f \phi)\right)\\
      &=-\iota_{A_i}\left(\iota_{A_i}(df \wedge \phi + f d \phi )+ \phi(A_i)df + f d(\phi(A_i))\right)\\
      &=-\iota_{A_i}\iota_{A_i}(df \wedge \phi) - \phi(A_i)df(A_i) + f\hat{\delta}(\phi)\\
      &= - \langle df,\phi\rangle + f\hat{\delta}(\phi)
    \end{split}
  \end{equation}
  since $\iota_{A_i}\iota_{A_i}(df \wedge \phi) = 0$.
\end{proof}

Now we are in a position to deduce formulae for the induced
differential operator in terms of the derivative flow $TX_t$.

\subsection{A formula for the induced differential operator}

We must now assume equation \eqref{eqn:SDE} is complete, which is to
say $\zeta(x) = \infty$, almost surely. For a bounded smooth $1$-form
$\alpha$ suppose $\alpha_t$ satisfies
\begin{equation}
  \partial_t \alpha_t = (\trace \hat{\nabla}^2+\hat{\nabla}_{A_0} - {\breve{\Ric}}_{A_0}-\hat{\nabla} A_0- (\hat{\nabla} A_0)^{\ast\trans}+\trace \hat{\nabla}A_0)\alpha_t
\end{equation}
with $\alpha^{\mstrut}_0 = \alpha$ and that
$\Xi_t(x):T_{x}M \rightarrow T_{X_t(x)}M$ solves the covariant It\^{o}
equation
\begin{equation}
  d^{\hat{\nabla}}\Xi_t(x) = - \left( {\breve{\Ric}}_{A_0}+\hat{\nabla} A_0+ (\hat{\nabla} A_0)^{\ast\trans}+\trace \hat{\nabla}A_0\right)(\Xi_t(x))\,dt + \sum_{i=1}^m \breve{\nabla}_{\Xi_t(x)}A_i\,dB_t^j 
\end{equation}
along the paths of $X_t(x)$ with $\Xi_0 =\id_{T_{x}M}$. Fixing
$T>0$, by It\^{o}'s formula we have
\begin{equation}\label{eq:itoformap}
  \begin{split}
    d(\alpha^{\mstrut}_{T-t}(\Xi_t(x))) &= \sum_{i=1}^m\hat{\nabla}_{A_i} \alpha^{\mstrut}_{T-t}(\Xi_t(x)) \,dB^i_t + \hat{\nabla}_{A_0}\alpha^{\mstrut}_{T-t}(\Xi_t(x))\,dt + \partial_t \alpha^{\mstrut}_{T-t}(\Xi_t(x))\,dt \\
    &\quad+ \trace \hat{\nabla}^2 \alpha^{\mstrut}_{T-t}(\Xi_t(x))\,dt + \alpha^{\mstrut}_{T-t}(d^{\hat{\nabla}}\Xi_t(x))\\
    &=\sum_{i=1}^m ((\hat{\nabla}_{A_i} \alpha^{\mstrut}_{T-t})
    \newdot + \alpha^{\mstrut}_{T-t}(\breve{\nabla}_{\bull}
    A_i))(\Xi_t(x)) \,dB^i_t.
  \end{split}
\end{equation}
It follows that $\alpha^{\mstrut}_{T-t}(\Xi_t(x))$ is a local
martingale, starting at $\alpha^{\mstrut}_T$. Furthermore, according
to equation (26) in \cite{ELJL1}, for the derivative process $TX_t(x)$
we have
\begin{equation}
  d^{\hat{\nabla}}TX_{t}(x) =- \breve{\Ric} (TX_{t}(x))\,dt + \breve{\nabla}_{TX_{t}(x)}A_0 \,dt +  \sum_{i=1}^m \breve{\nabla}_{TX_{t}(x)}A_i\,dB_t^i 
\end{equation}
and therefore, by the variation of constants formula, we have
\begin{equation}
  \Xi_t(x) = TX_{t}(x) - TX_{t}(x) \int_0^t TX_{s}(x)^{-1}  \left(\big(\hat{\nabla}A_0+
  (\hat{\nabla}A_0)^\ast +\trace \hat{\nabla}A_0\big)(\Xi_s(x))\right)ds.
\end{equation}
Thus it is possible to calculate $\Xi_t(x)$ without using the parallel
transport implicit in the original equation. Moreover, if the vector
field $A_0$ vanishes then $\Xi_t(x)$ is given precisely by the
derivative process $TX_t(x)$.

\begin{prop}\label{prop:thelocm}
  Suppose $h_t$ is an adapted process with paths in
  $L^{1,2}([0,T];\R)$. Then
  \begin{equation}\label{eqn:thelocalmart}
    \begin{split}
      \hat{\delta}&\alpha^{\mstrut}_{T-t} h_t - \int_0^t h_s \alpha^{\mstrut}_{T-s}(A_0^A)\,ds\\
      &+\frac12\alpha^{\mstrut}_{T-t}\left(\Xi_t(x) \int_0^t
        \left(\dot{h}_s -(\trace
          \hat{\nabla}A_0)(X_s(x))h_s\right){\Xi_s(x)^{-1}}
        A(X_s(x))\,dB_s\right)
    \end{split}
  \end{equation}
  is a local martingale, starting at
  $\hat{\delta} \alpha^{\mstrut}_T h_0$, where the vector field
  $A_0^A$ is given by \eqref{eq:A0def}.
\end{prop}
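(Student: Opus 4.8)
The plan is to run the argument that produced the intrinsic local martingale~\eqref{LocMartInt1} in Section~\ref{sec:one}, now with $\divv$ replaced by $\hat\delta$, with $\partr t\Theta_t$ replaced by $\Xi_t(x)$, with parallel transport $\invpartr s\,dB_s$ replaced by $\Xi_s(x)^{-1}A(X_s(x))\,dB_s$, and with $\divv Z$ replaced by $\trace\hat\nabla A_0$. The genuinely new feature is the zero-order term carried by the vector field $A_0^A$ in the commutation formula of Proposition~\ref{prop:finalthmcom}, which is responsible for the extra time integral $\int_0^t h_s\,\alpha^{\mstrut}_{T-s}(A_0^A)\,ds$ in~\eqref{eqn:thelocalmart}; tracking it through the integration by parts is the only real bookkeeping.

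First I would produce the scalar analogue of part~(i) of the intrinsic Proposition. Put $\A_t:=\exp\bigl(\int_0^t(\trace\hat\nabla A_0)(X_s(x))\,ds\bigr)$. Since $\hat\delta$ does not depend on $t$, applying $\hat\delta$ to the evolution equation satisfied by $\alpha_t$ and then invoking Proposition~\ref{prop:finalthmcom} yields $\partial_t(\hat\delta\alpha_t)=(\SL+\trace\hat\nabla A_0)(\hat\delta\alpha_t)-\alpha_t(A_0^A)$. Writing $v(t,x):=(\hat\delta\alpha_{T-t})(x)$ and applying It\^o's formula to $v(t,X_t(x))\A_t$, using $d\A_t=(\trace\hat\nabla A_0)(X_t(x))\A_t\,dt$, the first-order potential term cancels and one is left with
\[
n_t:=(\hat\delta\alpha_{T-t})(X_t(x))\,\A_t-\int_0^t\A_s\,\alpha_{T-s}(A_0^A)(X_s(x))\,ds
\]
being a local martingale, starting at $(\hat\delta\alpha_T)(x)$.

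Next, by~\eqref{eq:itoformap} the $T_x^*M$-valued process $K_t:=\alpha_{T-t}(\Xi_t(x)\,\newdot\,)$ is a local martingale with $dK_t=\sum_{i=1}^m\beta_t^i\,dB_t^i$, where $\beta_t^i(v)=(\hat\nabla_{A_i}\alpha_{T-t})(\Xi_t(x)v)+\alpha_{T-t}(\breve\nabla_{\Xi_t(x)v}A_i)$. I would set $\ell_t:=\A_t^{-1}h_t$, so that $\A_t\dot\ell_t=\dot h_t-(\trace\hat\nabla A_0)(X_t(x))h_t$, and $M_t:=\int_0^t\A_s\dot\ell_s\,\Xi_s(x)^{-1}A(X_s(x))\,dB_s$, and expand $\alpha_{T-t}(\Xi_t(x)M_t)=K_t(M_t)$ by It\^o's product rule: the two stochastic-integral pieces are local martingales, and with $dB_t^i\,dB_t^j=2\delta_{ij}\,dt$ the quadratic-covariation term equals $2\A_t\dot\ell_t\sum_i\beta_t^i\bigl(\Xi_t(x)^{-1}A_i(X_t(x))\bigr)\,dt$. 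Here lies the one point to get right: $\beta_t^i(\Xi_t(x)^{-1}A_i(X_t(x)))=(\hat\nabla_{A_i}\alpha_{T-t})(A_i)+\alpha_{T-t}(\breve\nabla_{A_i}A_i)$ at $X_t(x)$, and summing over $i$ the Le Jan-Watanabe property $\sum_i\breve\nabla_{A_i}A_i=0$ annihilates the second summand while~\eqref{eqn:hatbreve} identifies the first as $-(\hat\delta\alpha_{T-t})(X_t(x))$; hence
\[
\tfrac12\,d\bigl(\alpha_{T-t}(\Xi_t(x)M_t)\bigr)\mequal-(\hat\delta\alpha_{T-t})(X_t(x))\,\A_t\dot\ell_t\,dt.
\]
Finally, since $\ell$ has finite variation, $n_t\ell_t-\int_0^t n_s\,d\ell_s$ is a local martingale; rewriting it using $\A_t\ell_t=h_t$ and one integration by parts on the double integral involving $A_0^A$ gives $(\hat\delta\alpha_{T-t})(X_t(x))h_t-\int_0^t h_s\,\alpha_{T-s}(A_0^A)(X_s(x))\,ds-\int_0^t(\hat\delta\alpha_{T-s})(X_s(x))\,\A_s\dot\ell_s\,ds$, and substituting the displayed bracket identity for the last integral (with $M_0=0$) turns this, up to a local martingale, into precisely~\eqref{eqn:thelocalmart}, with starting value $\hat\delta\alpha^{\mstrut}_T\,h_0$.

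The routine part is the It\^o bookkeeping in the first and last steps; the step I would be most careful about is the covariation computation, where it is the Le Jan-Watanabe property together with the relation $\hat\delta\phi=-\sum_i(\hat\nabla_{A_i}\phi)(A_i)$ that collapses $\sum_i\beta_t^i(\Xi_t(x)^{-1}A_i(X_t(x)))$ to $-(\hat\delta\alpha_{T-t})(X_t(x))$, and thereby forces the sign $+\tfrac12$ in front of the stochastic integral in~\eqref{eqn:thelocalmart}, as opposed to the $-\tfrac12$ occurring in the intrinsic case~\eqref{LocMartInt1}. One should also check that the defining equations of $\Xi_t(x)$ and of $\alpha_t$ are exactly those making the drift terms cancel in It\^o's formula.
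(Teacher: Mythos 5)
Your proposal is correct and follows essentially the same route as the paper: the local martingale $n_t=\A_t\,\hat{\delta}\alpha^{\mstrut}_{T-t}-\int_0^t\A_s\,\alpha^{\mstrut}_{T-s}(A_0^A)\,ds$ obtained from Proposition~\ref{prop:finalthmcom}, the covariation identity collapsing $\sum_i\beta_t^i(\Xi_t^{-1}A_i)$ to $-\hat{\delta}\alpha^{\mstrut}_{T-t}$ via the Le Jan--Watanabe property and \eqref{eqn:hatbreve}, and the final assembly through $n_t\ell_t-\int_0^t n_s\,d\ell_s$ with $\ell_t=\A_t^{-1}h_t$ are exactly the paper's steps. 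Your remark on the sign flip of the $\tfrac12$-term relative to \eqref{LocMartInt1} is also accurate.
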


\begin{proof}
  Set
  \begin{equation}
    \A_t := \exp \left(\int_0^t (\trace \hat{\nabla}A_0)(X_s(x))\,ds\right)
  \end{equation}
  and define $\ell_t := \A^{-1}_t h_t$. By equation
  \eqref{eq:itoformap}, integration by parts and formula
  \eqref{eqn:hatbreve}, we have, suppressing the summation over $i$,
  that
  \begin{equation}\label{eqn:alocalmart}
    \begin{split}
      d&\left( \alpha^{\mstrut}_{T-t}\Big(\Xi_t(x) \frac12 \int_0^t \A_s\dot{\ell}_s{\Xi_s(x)^{-1}} A(X_s(x))\,dB_s\Big)\right) \\
      &\mequal \frac12\left(\big((\hat{\nabla}_{A_i} \alpha^{\mstrut}_{T-t})\,\nbull + \alpha^{\mstrut}_{T-t}(\breve{\nabla}_{\bull} A_i)\big)(\Xi_t(x)) \,dB^i_t\right)\left( \A_t\dot{\ell}_t{\Xi_t(x)^{-1}} A_j(X_t(x))\,dB^j_t\right)\\
      &=\left((\hat{\nabla}_{A_i} \alpha^{\mstrut}_{T-t}) A_i + \alpha^{\mstrut}_{T-t}(\breve{\nabla}_{A_i} A_i)\right)\A_t\dot{\ell}_t\,dt\\
      &=(\hat{\nabla}_{A_i} \alpha^{\mstrut}_{T-t}) A_i \A_t\dot{\ell}_t\,dt\\
      &=-(\hat{\delta} \alpha^{\mstrut}_{T-t})\A_t\dot{\ell}_t\,dt
    \end{split}
  \end{equation}
  where $\mequal$ denotes equality modulo the differential of a local
  martingale. By Proposition \ref{prop:finalthmcom} and It\^{o}'s
  formula we have
  \begin{equation}
    d( \A_t \hat{\delta} \alpha^{\mstrut}_{T-t}) \mequal \A_t \hat{\delta}\partial_t \alpha^{\mstrut}_{T-t} \,dt  + \A_t(\SL+\trace \hat{\nabla}A_0) \hat{\delta}\alpha^{\mstrut}_{T-t} \,dt =\A_t\alpha^{\mstrut}_{T-t}(A_0^A)\,dt
  \end{equation}
  which implies
  \begin{equation}
    n_t := \A_t \hat{\delta} \alpha^{\mstrut}_{T-t} -\int_0^t \A_s\alpha^{\mstrut}_{T-s} (A_0^A)ds
  \end{equation}
  is a local martingale, starting at
  $\hat{\delta}\alpha^{\mstrut}_T$. This implies
  \begin{equation}
    \begin{split}
      d(n_t \ell_t ) &\mequal n_t \dot{\ell}_t\,dt\\
      &=
      (\hat{\delta}\alpha^{\mstrut}_{T-t})\A_t\dot{\ell}_t\,dt-\dot{\ell}_t\int_0^t\A_s\alpha^{\mstrut}_{T-s}
      (A_0^A)\,ds dt.
    \end{split}
  \end{equation}
  Substituting the definition of $n_t$ into the left-hand side and
  performing integration by parts to the second term on the right-hand
  side implies
  \begin{equation}\label{eqn:peneq}
    \hat{\delta}\alpha^{\mstrut}_{T-t}h_t - \int_0^t (\hat{\delta} \alpha^{\mstrut}_{T-s})\A_s\dot{\ell}_s \,ds- \int_0^t h_s \alpha^{\mstrut}_{T-s}(A_0^A)\,ds 
  \end{equation}
  is another local martingale. Since
  \begin{equation}
    \dot{\ell}_t = \A_t^{-1}\left(\dot{h}_t -(\trace \hat{\nabla}A_0)(X_t(x))h_t\right),
  \end{equation}
  substituting formula \eqref{eqn:alocalmart} into the second term in
  \eqref{eqn:peneq} completes the proof.
\end{proof}

\begin{thm}\label{thm:divsemi}
  Suppose $h_t$ is any adapted process with paths in
  $L^{1,2}([0,\infty);\R)$ such that $h_0=0$ and $h_T=1$ and
  that $\alpha$ is a bounded smooth $1$-form. Suppose $\eqref{eqn:SDE}$
  is complete and that the local martingales
  $\alpha^{\mstrut}_{T-t}(\Xi_t)$ and \eqref{eqn:thelocalmart} are
  true martingales. Then
  \begin{equation}
    P_T (\hat{\delta}\alpha) =  -\frac12\E\left[\alpha\left(\Xi_T \int_0^T {\Xi_t^{-1}} \left( \left(\dot{h}_t-(\trace \hat{\nabla}A_0)(X_t)h_t\right) A(X_t)\,dB_t + 2 h_t A_0^A \,dt\right)\right)\right].
  \end{equation}
\end{thm}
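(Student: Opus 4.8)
The plan is to take expectations in the local-martingale identity of Proposition~\ref{prop:thelocm}. Since $h_0=0$, that proposition tells us the process
\begin{equation*}
  L_t := (\hat{\delta}\alpha_{T-t})(X_t)\,h_t - \int_0^t h_s\,\alpha_{T-s}(A_0^A)(X_s)\,ds + \frac12\,\alpha_{T-t}\!\left(\Xi_t \int_0^t \left(\dot{h}_s -(\trace \hat{\nabla}A_0)(X_s)h_s\right)\Xi_s^{-1} A(X_s)\,dB_s\right)
\end{equation*}
(all processes started at $x$) is a local martingale with $L_0=0$, and the hypotheses were chosen exactly so that it is a true martingale. So I would first record $\E[L_T]=\E[L_0]=0$ and then read off the three terms of $L_T$.

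Using $h_T=1$ and $\alpha_{T-t}|_{t=T}=\alpha_0=\alpha$, the first term of $L_T$ is $(\hat{\delta}\alpha)(X_T)$, and since \eqref{eqn:SDE} is complete its expectation is $P_T(\hat{\delta}\alpha)(x)$; the $dB$-term is $\frac12\,\alpha(\Xi_T\int_0^T(\dot h_t-(\trace\hat{\nabla}A_0)(X_t)h_t)\Xi_t^{-1}A(X_t)\,dB_t)$, which already matches the stochastic-integral part of the asserted formula. Thus $\E[L_T]=0$ rearranges to
\begin{equation*}
  P_T(\hat{\delta}\alpha)(x) = \E\!\left[\int_0^T h_s\,\alpha_{T-s}(A_0^A)(X_s)\,ds\right] - \frac12\,\E\!\left[\alpha\!\left(\Xi_T\int_0^T \left(\dot h_t-(\trace\hat{\nabla}A_0)(X_t)h_t\right)\Xi_t^{-1}A(X_t)\,dB_t\right)\right],
\end{equation*}
and the remaining task is to rewrite the first expectation so that it, too, appears under $\alpha(\Xi_T\int_0^T\Xi_t^{-1}(\newdot)\,dt)$.

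For that I would use the second true-martingale hypothesis. By \eqref{eq:itoformap} the $T_x^\ast M$-valued process $Y_t:=\alpha_{T-t}(\Xi_t(x)\,\newdot)$ is a local martingale, hence a true one, so $\E[Y_T(v)\mid\SF_s]=Y_s(v)$ for every $v\in T_xM$. For each fixed $s$ the vector $w_s:=\Xi_s^{-1}h_s A_0^A(X_s)\in T_xM$ is $\SF_s$-measurable, so freezing it in the (bilinear) pairing and using linearity of the conditional expectation gives
\begin{equation*}
  \E\!\left[\alpha\!\left(\Xi_T\,\Xi_s^{-1}h_s A_0^A(X_s)\right)\right]=\E[Y_T(w_s)]=\E[Y_s(w_s)]=\E\!\left[h_s\,\alpha_{T-s}(A_0^A)(X_s)\right].
\end{equation*}
Integrating over $s\in[0,T]$ and applying Fubini turns the drift expectation into $\E[\alpha(\Xi_T\int_0^T\Xi_t^{-1}h_t A_0^A(X_t)\,dt)]$. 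Substituting this back, both expectations carry the common outer factor $\alpha(\Xi_T\int_0^T\Xi_t^{-1}(\newdot))$; merging the two integrands and extracting $-\frac12$ yields the stated identity.

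I expect this recasting of the drift term $\int_0^t h_s\,\alpha_{T-s}(A_0^A)\,ds$ to be the only step requiring thought: unlike the $dB$-integral it does not vanish in expectation, so it cannot simply be dropped, and bringing it under the sign $\Xi_T\int_0^T\Xi_t^{-1}(\newdot)\,dt$ genuinely needs the auxiliary martingale $Y_t$ together with a Fubini argument — the conditioning step being legitimate precisely because the integrand is a Bochner, not an It\^{o}, integral. Everything else is bookkeeping: keeping the time-reversal straight so that $\alpha_{T-t}$ is always evaluated along $X_t$, checking the integrability needed for the Fubini and conditioning steps (which follows from the standing boundedness/true-martingale assumptions), and invoking completeness once to identify $\E[(\hat{\delta}\alpha)(X_T)]$ with $P_T(\hat{\delta}\alpha)(x)$.
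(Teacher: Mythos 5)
Your proof is correct and takes essentially the same route as the paper: take expectations in the true martingale \eqref{eqn:thelocalmart} at $t=T$ using $h_0=0$ and $h_T=1$, and use the assumed martingale property of $\alpha^{\mstrut}_{T-t}(\Xi_t)$ (via conditioning on $\SF_s$ and Fubini, which you spell out and the paper leaves implicit) to rewrite $\E\bigl[\int_0^T h_s\,\alpha^{\mstrut}_{T-s}(A_0^A)\,ds\bigr]$ as $\E\bigl[\alpha\bigl(\Xi_T\int_0^T h_t\,\Xi_t^{-1}A_0^A\,dt\bigr)\bigr]$. One caveat: as your own penultimate display shows, this drift expectation enters with a \emph{plus} sign, so the final ``merge'' actually produces $-2h_t A_0^A\,dt$ inside the bracket rather than the $+2h_t A_0^A\,dt$ appearing in the stated formula; this sign mismatch is already present between Proposition \ref{prop:thelocm} and the theorem as printed, and the paper's own proof makes the identical silent step, so it is not a defect of your argument but worth flagging rather than papering over with ``extracting $-\frac12$''.
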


\begin{proof}
  By \eqref{eq:itoformap} we have
  \begin{equation}
    \alpha^{\mstrut}_{T-t}(\Xi_t) = \alpha(\Xi_T)-\int_t^T \left(\big(\hat{\nabla}_{A_i} \alpha^{\mstrut}_{T-s}\big) \,\nbull + \alpha^{\mstrut}_{T-s}(\breve{\nabla}_{\bull} A_i)\right)(\Xi_t) \,dB^i_t
  \end{equation}
  and therefore
  \begin{equation}
    \E\left[\int_0^T  \alpha^{\mstrut}_{T-t} (\Xi_t  h_t \Xi_t^{-1} A_0^A) \,dt\right] =\E\left[ \alpha\left(\Xi_T \int_0^T h_t \Xi_t^{-1} A_0^A \,dt\right) \right]
  \end{equation}
  since $\alpha^{\mstrut}_{T-t}(\Xi_t)$ is assumed to be a
  martingale. The result now follows from Proposition
  \ref{prop:thelocm}, by taking expectations.
\end{proof}

In analogy with Lemma \ref{lem:truemart}, an integrability assumption
on $h$ plus suitable bounds on
$\hat{\nabla}A_0$,
$\trace \hat{\nabla} A_0$, $A_0^A$ and $\hat{\delta} \alpha$ and on
the moments of $TX_t$ and $TX_t^{-1}$ would be sufficient to guarantee that
$\alpha^{\mstrut}_{T-t}(\Xi_t)$ and \eqref{eqn:thelocalmart} are true
martingales.

\begin{cor}\label{cor:maincor}
  Suppose $f$ is a bounded smooth function. Suppose $V$ is a bounded smooth
  vector field with $\sum_{i=1}^m A_i\langle V,A_i\rangle$
  bounded. Then, under the assumptions of Theorem \ref{thm:divsemi}
  with $\alpha = fV^\flat$, we have
    \begin{equation}
    \begin{split}
      &P_T(V(f)) = -\sum_{i=1}^m \E\left[ f(X_T)\,A_i\langle V,A_i\rangle (X_T)\right] \\
      &+\frac12\E\left[ f(X_T) \,\bigg\langle V\,
        ,\,\Xi_T \int_0^T {\Xi^{-1}_{t}}
        \left(\left(\dot{h}_t-(\trace\hat{\nabla}A_0)(X_t)h_t \right)A(X_t)\,dB_t
          + 2h_t A_0^A \,dt\right)\bigg \rangle\right]
    \end{split}
  \end{equation}
    with
  \begin{equation}
    \begin{split}
      \Xi_t &= TX_{t} - TX_{t} \int_0^t TX_{s}^{-1} \left((\hat{\nabla}A_0)^\ast + \hat{\nabla}A_0+ \trace \hat{\nabla}A_0\right)(\Xi_s)\,ds,\\
      A_0^A&=\sum_{i=1}^m\left((\hat{\nabla}A_0)^{ \ast}
        +\hat{\nabla} A_0\right)\left(\breve{T}(\newdot,A_i)^\ast
        (A_i)\right)+\big[A_0,\breve{T}(\newdot,A_i)^\ast (A_i)\big],
    \end{split}
  \end{equation}
  where the operators $\hat{\nabla} A_0$ and
  $\breve{T}(\newdot,A_i)$ are given at each $x \in M$ and
  $v \in T_xM$ by
  \begin{equation}
    \begin{split}
      \hat{\nabla}_v A_0 &= A(x)\left(d\left(A^\ast(\newdot) A_0(\newdot)\right)_x (v) - (dA^\ast)_x(v,A_0)\right),\\
      \breve{T}(v,A_i)_x &= A(x) (dA^\ast)_x(v,A_i).
    \end{split}
  \end{equation}
\end{cor}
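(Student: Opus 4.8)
The plan is to reduce the corollary to Theorem~\ref{thm:divsemi} by applying that theorem to the $1$-form $\alpha := fV^\flat$, using the decomposition formula for $\hat\delta$ in Lemma~\ref{lem:decomp}. First I would apply Lemma~\ref{lem:decomp} with $\phi = V^\flat$: since $\langle df,V^\flat\rangle = df(V) = V(f)$ under the metric identification, the lemma gives the pointwise identity $V(f) = f\,\hat\delta(V^\flat) - \hat\delta(fV^\flat)$ on $M$. The first piece already contains no derivatives of $f$: unwinding $\hat\delta\psi = -\sum_i \iota_{A_i}\L_{A_i}\psi$ and using $(\L_{A_i}\psi)(A_i) = A_i(\psi(A_i)) - \psi([A_i,A_i]) = A_i(\psi(A_i))$ (equivalently, using \eqref{eqn:hatbreve} with the Le Jan--Watanabe property), one obtains $\hat\delta(V^\flat) = -\sum_{i=1}^m A_i\langle V,A_i\rangle$, which is bounded by hypothesis.

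Next I would apply $P_T$ to the identity. Since \eqref{eqn:SDE} is complete, $P_T g(x) = \E[g(X_T(x))]$ for bounded $g$, so $P_T\big(f\hat\delta(V^\flat)\big)(x) = -\sum_{i=1}^m \E[f(X_T(x))\,A_i\langle V,A_i\rangle(X_T(x))]$, which is the first term of the asserted formula. For the second term, note that $\alpha = fV^\flat$ is a bounded smooth $1$-form with $\hat\delta\alpha = f\hat\delta(V^\flat) - V(f)$ bounded, and that under the standing hypotheses the local martingales $\alpha_{T-t}(\Xi_t)$ and \eqref{eqn:thelocalmart} are true martingales; hence Theorem~\ref{thm:divsemi} applies and yields $P_T(\hat\delta(fV^\flat)) = -\frac12\E\big[(fV^\flat)\big(\Xi_T\int_0^T \Xi_t^{-1}(\cdots)\big)\big]$. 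Evaluating the $1$-form $fV^\flat$ at $X_T$ on the tangent vector $\Xi_T(\cdots)\in T_{X_T}M$ produces the factor $f(X_T)\langle V(X_T),\Xi_T(\cdots)\rangle$. Subtracting this from the first term, and tracking the overall sign, gives exactly the claimed expression for $P_T(V(f))$.

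It remains to record the explicit forms of $\Xi_t$ and $A_0^A$, but these are already available from the discussion preceding Theorem~\ref{thm:divsemi}: the variation-of-constants formula applied to the covariant equation for $TX_t$ gives $\Xi_t = TX_t - TX_t\int_0^t TX_s^{-1}\big((\hat\nabla A_0)^\ast + \hat\nabla A_0 + \trace\hat\nabla A_0\big)(\Xi_s)\,ds$, while $A_0^A$ is \eqref{eq:A0def}, obtained from Lemma~\ref{lem:contor} by substituting $\sum_i\nabla_{A_i}A_i = -\sum_i\breve T(\newdot,A_i)^\ast(A_i)$; finally \eqref{eqn:adjinX} and \eqref{eqn:torinX} express $\hat\nabla A_0$ and $\breve T(\newdot,A_i)$ in terms of $A$ and $A_0$ alone. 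I do not expect a genuine obstacle in the corollary itself; the only points to watch are the bookkeeping that lets $f(X_T)$ factor cleanly out of both terms, and verifying that the three boundedness hypotheses on $f$, $V$ and $\sum_i A_i\langle V,A_i\rangle$ legitimise applying $P_T$ termwise and invoking Theorem~\ref{thm:divsemi} --- all the substantive analysis having been carried out in Proposition~\ref{prop:thelocm} and Theorem~\ref{thm:divsemi}.
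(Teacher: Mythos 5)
Your proposal is correct and follows essentially the same route as the paper: apply Lemma \ref{lem:decomp} with $\phi=V^\flat$ to write $V(f)=f\hat\delta(V^\flat)-\hat\delta(fV^\flat)$, identify $\hat\delta(V^\flat)=-\sum_i A_i\langle V,A_i\rangle$ (your direct Lie-derivative computation is equivalent to the paper's use of \eqref{eqn:brevehat} with the Le Jan--Watanabe property), and then invoke Theorem \ref{thm:divsemi} with $\alpha=fV^\flat$, the explicit forms of $\Xi_t$ and $A_0^A$ being already established before that theorem. The sign bookkeeping and the factoring of $f(X_T)$ out of $(fV^\flat)(\Xi_T(\cdots))$ are handled correctly.
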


\begin{proof}
  This follows from Theorem \ref{thm:divsemi}. In particular, Lemma
  \ref{lem:decomp} implies
  \begin{equation}
    V(f) = f \hat{\delta} (V^{\flat}) - \hat{\delta}(f V^{\flat})
  \end{equation}
  while formula \eqref{eqn:brevehat}, the Le Jan-Watanabe property and
  the adaptedness of $\breve{\nabla}$ imply
  \begin{equation}
    \hat{\delta} (V^\flat) = -\sum_{i=1}^m\langle \breve{\nabla}_{A_i} V,A_i\rangle = -\sum_{i=1}^mA_i\langle V,A_i\rangle.\qedhere
  \end{equation}
\end{proof}

Note that if \eqref{eqn:SDE} is a gradient system then
$\SL=\Delta+ A_0$ and $A_0^A$ vanishes and
\begin{equation}
  \sum_{i=1}^m A_i\langle V,A_i\rangle = \divv V.
\end{equation}
In this case, since $\trace \hat{\nabla} A_0 = \divv A_0$, Corollary
\ref{cor:maincor} yields the unfiltered version of Corollary
\ref{cor:formula_cor}.

\begin{cor}\label{cor:reversebis}
  Under the assumptions of Corollary \ref{cor:maincor} we have
    \begin{equation}
    \begin{split}
      &\big(d \log p_T(x,\newdot)\big)_y(v) = -\Big\langle v,\sum_{i=1}^m \breve{T}(\newdot,A_i)^\ast (A_i) (y)\Big\rangle\\
      & -\frac12\Big\langle v,\E\left[\Xi_T \int_0^T
        {\Xi^{-1}_{t}} \left(\left(\dot{h}_t-(\trace
            \hat{\nabla}A_0)(X_t)h_t\right)A(X_t)\,dB_t +
          2h_t A_0^A \,dt\right) \, \bigg\vert X_T(x) = y
      \right]\Big\rangle
    \end{split}
  \end{equation}
    for all $v \in T_y M$ where the various terms appearing in the
  right-hand side can be calculated as in Corollary \ref{cor:maincor}.
\end{cor}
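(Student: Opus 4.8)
The strategy is to argue as in the intrinsic case of Section~\ref{sec:one}: apply the probabilistic formula of Theorem~\ref{thm:divsemi} to a compactly supported test vector field, re-express both sides as integrals against the heat kernel, and then let the test vector field vary.

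Fix $x\in M$ and let $V$ be an arbitrary smooth vector field of compact support. Then $\alpha=V^\flat$ is bounded and $\hat\delta(V^\flat)$ is bounded, so under the standing hypotheses the local martingales $\alpha^{\mstrut}_{T-t}(\Xi_t)$ and \eqref{eqn:thelocalmart} are true martingales, and Theorem~\ref{thm:divsemi} (equivalently Corollary~\ref{cor:maincor} with $f\equiv1$) gives
$$P_T\big(\hat\delta(V^\flat)\big)(x)=-\frac12\,\E\!\left[\Big\langle V(X_T(x)),\,\Xi_T\!\int_0^T\!\Xi_t^{-1}\!\Big(\big(\dot h_t-(\trace\hat\nabla A_0)(X_t)h_t\big)A(X_t)\,dB_t+2h_t A_0^A\,dt\Big)\Big\rangle\right].$$
Conditioning on $X_T(x)=y$ and integrating against the smooth heat kernel $p_T(x,\newdot)$, the right-hand side becomes $-\tfrac12\int_M\big\langle V(y),\,W(y)\big\rangle\,p_T(x,y)\,\vol(dy)$, where $W(y):=\E[\,\Xi_T\int_0^T\Xi_t^{-1}(\cdots)\mid X_T(x)=y\,]$ is exactly the conditional expectation in the statement.

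For the left-hand side, equation~\eqref{eq:relation} together with Lemma~\ref{lem:contor} yields
$$\hat\delta(V^\flat)=\delta(V^\flat)-\Big\langle V,\sum_{i=1}^m\nabla_{A_i}A_i\Big\rangle=-\divv V+\Big\langle V,\sum_{i=1}^m\breve T(\newdot,A_i)^\ast(A_i)\Big\rangle,$$
so, integrating the divergence term by parts against $p_T(x,\newdot)$ (Stokes, $V$ compactly supported) and using $\nabla p_T(x,\newdot)=p_T(x,\newdot)\,\nabla\log p_T(x,\newdot)$,
$$P_T\big(\hat\delta(V^\flat)\big)(x)=\int_M\Big\langle V(y),\,(\nabla\log p_T(x,\newdot))_y+\sum_{i=1}^m\breve T(\newdot,A_i)^\ast(A_i)(y)\Big\rangle\,p_T(x,y)\,\vol(dy).$$
Equating the two expressions and letting $V$ range over all smooth compactly supported vector fields forces the two (continuous in $y$) integrands to coincide, so that $(\nabla\log p_T(x,\newdot))_y=-\sum_{i=1}^m\breve T(\newdot,A_i)^\ast(A_i)(y)-\tfrac12 W(y)$; pairing with $v\in T_yM$ gives the asserted identity, with all terms computed as in Corollary~\ref{cor:maincor}.

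The only delicate points are routine: first, the legitimacy of the conditioning/disintegration step — interchanging the expectation with the $\vol$-integral — which follows from the integrability already built into the hypotheses of Theorem~\ref{thm:divsemi} together with smoothness of $p_T$; and second, the passage from \emph{``the heat-kernel integrals agree for every compactly supported $V$''} to \emph{``the integrands agree pointwise''}, which again uses continuity in $y$ of both sides. I expect this bookkeeping, rather than any new estimate, to be the main thing to get right.
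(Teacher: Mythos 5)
Your proposal is correct and follows essentially the same route as the paper: apply the formula of Theorem \ref{thm:divsemi}/Corollary \ref{cor:maincor} to compactly supported test data, convert $\hat{\delta}(V^\flat)$ into $-\divv V$ plus the torsion correction via \eqref{eq:relation} and Lemma \ref{lem:contor}, and integrate by parts against $p_T(x,\newdot)$ before identifying integrands. The paper's own proof is only a sketch of this duality argument (mirroring the proof of the analogous corollary in Section \ref{sec:one}), and your write-up fills in exactly the intended steps with the correct signs.
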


\begin{proof}
  Since Corollary \ref{cor:maincor} holds for all smooth functions $f$
  and vector fields $V$ of compact support, and since by Lemma
  \ref{lem:decomp}
  \begin{equation}
    f \hat{\delta} (V^{\flat}) - \hat{\delta}(f V^{\flat}) = V(f) = f \delta (V^{\flat}) - \delta(f V^{\flat}),
  \end{equation}
  the result follows from equation \eqref{eq:relation}, Lemma \ref{lem:contor} and Corollary \ref{cor:maincor}.
\end{proof}

\begin{example}
  Consider the special case in which $M= \R^n$. Denote by $q_T(x,y)$
  the smooth density of $X_T(x)$ with respect to the standard
  $n$-dimensional Lebesgue measure. Recall that $p_T(x,y)$ denotes the
  density with respect to the induced Riemannian measure. It follows
  that
  \begin{equation}
    q_T(x,y) = p_T(x,y) \,\rho^{{1}/{2}}(y)
  \end{equation}
  where $\rho(y)$ denotes the absolute value of the determinant of the
  matrix
  \begin{equation}
    \big\lbrace\langle A^\ast \partial_i, A^\ast \partial_j \rangle_{\R^m}^\mstrut (y)\big\rbrace_{i,j=1}^n
  \end{equation}
  in which $\lbrace \partial_i \rbrace_{i=1}^n$ denotes the standard
  basis of vector fields on $\R^n$. Consequently
  \begin{equation}
    \left(d \log q_T(x,\newdot)\right)_y(v) = \left(d \log p_T(x,\newdot)\right)_y(v) 
    + \big(d \log \rho^{{1}/{2}}(\newdot)\big)_y(v)
  \end{equation}
  with the first term on the right-hand side given, in terms of the
  induced metric, by Corollary \ref{cor:reversebis}.
\end{example}

\section*{Acknowledgements}
This work has been supported by the Fonds National de la Recherche Luxembourg (FNR) under the OPEN scheme (project GEOMREV O14/7628746)

\end{document}